\newtheorem{theorem}{Theorem}[section]
\newtheorem{cor}[theorem]{Corollary}
\newtheorem{lemma}[theorem]{Lemma}
\newtheorem{prop}[theorem]{Proposition}
\newtheorem{defi}[theorem]{Definition}
\newtheorem*{rem}{Remark}
\newtheorem*{theo}{Theorem}
\DeclareMathOperator \add{add}
\DeclareMathOperator \ann{ann}
\DeclareMathOperator \coker{coker}
\DeclareMathOperator \im{im}
\DeclareMathOperator \CR{CR}
\DeclareMathOperator \Ext{Ext}
\DeclareMathOperator \Jt{JType}
\DeclareMathOperator \modi{mod}
\DeclareMathOperator \pd{pd}
\DeclareMathOperator \Hom{Hom}
\DeclareMathOperator \End{End}
\DeclareMathOperator \EIP{EIP}
\DeclareMathOperator \EKP{EKP}
\DeclareMathOperator \CJT{CJT}
\DeclareMathOperator \rad{rad}
\DeclareMathOperator \ind{ind}
\DeclareMathOperator \supp{supp}
\DeclareMathOperator \soc{soc}
\DeclareMathOperator \GL{GL}
\DeclareMathOperator \rk{rk}
\begin{document}

\title{Categories of modules for elementary abelian $p$-groups and generalized Beilinson algebras}
\author{Julia Worch}
\address{Christian-Albrechts-Universit\"at zu Kiel, Ludewig-Meyn-Str. 4, 24098 Kiel, Germany}
\email{worch@math.uni-kiel.de}

\thanks{Partly supported by the D.F.G. priority programm SPP 1388  ``Darstellungstheorie''}

\begin{abstract}
In this paper, we approach the study of modules of constant Jordan type and equal images modules over elementary abelian $p$-groups $E_r$ of rank $r\geq 2$ by exploiting a functor from the module category of a generalized Beilinson algebra $B(n,r)$, $n \leq p$, to $\modi E_r$. \\
We define analogs of the above mentioned properties in $\modi B(n,r)$ and give a homological characterization of the resulting subcategories via a $\mathbb{P}^{r-1}$-family of $B(n,r)$-modules of projective dimension one. This enables us to apply general methods from Auslander-Reiten theory and thereby arrive at results that, in particular, contrast the findings for equal images modules of Loewy length two over $E_2$ \cite{cfs09} with the case $r > 2$.
Moreover, we give a generalization of the $W$-modules defined by Carlson, Friedlander and Suslin in \cite{cfs09}.
\end{abstract}

\maketitle

\section*{Introduction}

Addressing representations of finite group schemes over fields of positive characteristic, Carlson, Friedlander and Pevtsova have introduced in \cite{cafrpe08} the category of modules of constant Jordan type. Their approach involves the theory of $\pi$-points, i.e. certain embeddings $\alpha: k[T]/(T^p) \rightarrow kG$ along which representations of $kG$ can be restricted to the less complicated subalgebra $k \mathbb{Z}_p \cong k[T]/(T^p)$. The representations of $k[T]/(T^p)$ are completely understood in terms of Jordan types, i.e. Jordan block decompositions. Since $kG$ is wild in most cases, it is reasonable to study representations with additional properties. A $kG$-module has constant Jordan type if its Jordan block decomposition does not depend on the chosen $\pi$-point. There is the related notion of the constant $j$-rank property such that a module has constant Jordan type iff it has constant $j$-rank for all $j \geq 1$ (cf. \cite[p. 11]{fp}).\\

Confining investigations to elementary abelian $p$-groups $E_r=(\mathbb{Z}_p)^{\times r}$ of rank $r \geq 2$, a more restrictive condition has been formulated in \cite{cfs09} by Carlson, Friedlander and Suslin, where $M \in \modi kE_r$ satisfies the so-called equal images property if there exists a $k$-space $V$ such that $\alpha(t).M=V$ for all $\pi$-points $\alpha$. The dual concept is referred to as the equal kernels property. In \cite{cfs09}, the authors are mainly concerned with the case $r=2$ and they introduce a family of $kE_2$-modules, the so-called $W$-modules, which satisfy the equal images property and are ubiquitous in a sense that every module satisfying the equal images property is a quotient of a $W$-module \cite[4.4]{cfs09}. This relies on the fact that the indecomposable equal images modules of Loewy length two over $kE_2$ are $W$-modules \cite[4.1]{cfs09}. It has been observed that these modules correspond to the preinjective modules over the Kronecker algebra (cf. \cite[4.2.2]{f11}).\\

The approach we give in this paper is based on the objective to understand $\modi_n kE_r$, i.e. the full subcategory of $kE_r$-modules with Loewy length bounded by $n \leq p$. The generalized Beilinson algebra $B(n,r)$ provides a faithful exact functor $\mathfrak{F}: \modi B(n,r) \rightarrow \modi_n(kE_r)$. We formulate analogs of the constant Jordan type and constant $j$-rank property as well as the equal images and equal kernels property for $ B(n,r)$-modules and define full subcategories $\CJT(n,r),\ \CR^j(n,r), \ \EIP(n,r), \ \EKP(n,r)\subset \modi B(n,r)$ such that the restrictions of $\mathfrak{F}$ to $\EIP(n,r)$ and $\EKP(n,r)$ reflect isomorphisms and have an essential image consisting of standardly graded modules with the equal images property and costandardly graded modules with the equal kernels property, respectively.\\

An immediate advantage of passing over to $B(n,r)$ is that we are able to give a homological characterization of the categories $\EIP(n,r)$ and $\EKP(n,r)$ involving a $\mathbb{P}^{r-1}$-family of $B(n,r)$-modules of projective dimension one which allows us to apply general methods from Auslander-Reiten theory. With this tool in hand, we prove:
\begin{theo}[A]
The category $\EIP(n,r)$ is the torsion class $\mathcal{T}$ of a torsion pair $(\mathcal{T}, \mathcal{F})$ in $\modi B(n,r)$ such that $\EKP(n,r) \subset \mathcal{F}$ and $\mathcal{T}$ is closed under the Auslander-Reiten translate $\tau$ and contains all preinjective modules.\\
Dually, $\EKP(n,r)$ is the torsion-free class $\mathcal{F}'$ of a torsion pair $(\mathcal{T'}, \mathcal{F'})$ in $\modi B(n,r)$  such that $\EIP(n,r) \subset \mathcal{T}'$ and $\mathcal{F}'$ is closed under $\tau^{-1}$ and contains all preprojective modules.\\
In particular, there are no non-trivial maps $\EIP(n,r) \rightarrow \EKP(n,r)$.
\end{theo}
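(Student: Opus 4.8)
The plan is to run the whole argument through the homological characterization established above, which I read in the form: $M \in \EIP(n,r)$ if and only if $\Ext^1_{B(n,r)}(L_x, M) = 0$ for every $x \in \mathbb{P}^{r-1}$, where $\{L_x\}_{x\in\mathbb{P}^{r-1}}$ is the family of modules of projective dimension one, and dually for $\EKP(n,r)$. First I would exploit that each $L_x$ has projective dimension one, so that $\Ext^1(L_x, -)$ is \emph{right exact}: in the long exact sequence attached to a short exact sequence the terms $\Ext^{\geq 2}(L_x,-)$ vanish, hence $\Ext^1(L_x,-)$ sends epimorphisms to epimorphisms. Consequently $\EIP(n,r) = \bigcap_x \ker \Ext^1(L_x,-)$ is a full additive subcategory closed under quotients and extensions, and in the module category of a finite-dimensional algebra such a subcategory is precisely the torsion class of a (uniquely determined) torsion pair. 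This yields $(\mathcal{T},\mathcal{F})$ with $\mathcal{T} = \EIP(n,r)$ and $\mathcal{F} = \mathcal{T}^{\perp}$. The dual statement for $\EKP(n,r)$ follows by applying the standard $k$-duality $D = \Hom_k(-,k)$, which exchanges projective dimension one with injective dimension one, so that the corresponding $\Ext^1$-vanishing becomes inherited by submodules and extensions, i.e. defines a torsion-free class $\mathcal{F}' = \EKP(n,r)$.

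The crux is the orthogonality $\Hom(\EIP(n,r),\EKP(n,r)) = 0$, which at once produces the inclusions $\EKP(n,r) \subseteq \mathcal{F}$ and $\EIP(n,r) \subseteq \mathcal{T}'$. Given $f : M \to N$ with $M \in \EIP$ and $N \in \EKP$, its image $\im f$ is a quotient of $M$, hence lies in the torsion class $\EIP$, and a submodule of $N$, hence lies in the torsion-free class $\EKP$; so it suffices to prove $\EIP(n,r) \cap \EKP(n,r) = 0$. Here I would pass through the faithful exact functor $\mathfrak{F}$: a module $X$ in both classes maps to one on which, for every $x = [x_1 : \cdots : x_r]$, the operator $\sum_i x_i \theta_i$ between consecutive graded layers is simultaneously surjective (equal images) and injective (equal kernels), hence an isomorphism, so each layer-transition is a \emph{square} pencil that is nonsingular at every point of $\mathbb{P}^{r-1}$. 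But $\det\!\left(\sum_i x_i \theta_i\right)$ is a homogeneous form on $\mathbb{P}^{r-1}$, and since $r-1 \geq 1$ a form of positive degree has a zero; nonvanishing everywhere forces each layer to be zero-dimensional, so $X = 0$. Faithfulness of $\mathfrak{F}$ then gives $f = 0$. This is exactly the step where $r \geq 2$ enters.

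It remains to show that $\mathcal{T} = \EIP(n,r)$ is closed under $\tau$ and contains the preinjectives. Every injective $I$ lies in $\EIP$ trivially, as $\Ext^1(L_x,I) = 0$. For closure under $\tau$ I would invoke the Auslander-Reiten formula $\Ext^1(L_x,\tau M) \cong D\,\underline{\Hom}(\tau^{-1}\tau M, L_x) \cong D\,\underline{\Hom}(M,L_x)$, valid for $M$ without projective summands, which we may assume; thus $\tau M \in \EIP$ amounts to $\underline{\Hom}(M,L_x) = 0$ for all $x$. Since the test modules $L_x$ themselves lie in $\EKP(n,r)$ and $M \in \EIP(n,r)$, the orthogonality of the previous paragraph gives $\Hom(M,L_x) = 0$, a fortiori $\underline{\Hom}(M,L_x) = 0$, whence $\tau M \in \EIP$. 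Iterating, every $\tau^{k}$ of an injective lies in $\EIP$, so $\mathcal{T}$ contains all preinjectives. The remaining assertions — that $\mathcal{F}' = \EKP(n,r)$ is the torsion-free class of a pair with $\EIP(n,r) \subseteq \mathcal{T}'$, is closed under $\tau^{-1}$, and contains all preprojectives — then follow formally by applying $D$, which interchanges $\EIP$ with $\EKP$, torsion with torsion-free, $\tau$ with $\tau^{-1}$, and preinjective with preprojective.

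The main obstacle I anticipate is not the torsion-pair formalism, which is soft once the projective-dimension-one characterization is granted, but the faithful translation of $\EIP \cap \EKP = 0$ through $\mathfrak{F}$ in the several-layer setting of $B(n,r)$ for $n > 2$: one must verify carefully that the equal-images and equal-kernels conditions genuinely force \emph{each} layer-transition to be a nonsingular square pencil, so that the determinant-form argument applies layer by layer. The two further inputs that do real work are the right-exactness coming from projective dimension one and the fact that the family $\{L_x\}$ is contained in $\EKP(n,r)$; the Auslander-Reiten computation itself is then routine.
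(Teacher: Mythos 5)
Your overall strategy coincides with the paper's: establish that $\EIP(n,r)$ is closed under quotients and extensions using $\pd X_{\alpha}^{1}=1$, deduce the torsion pair, use the Auslander--Reiten formula for $\tau$-closure, and dualize. Your argument for $\EIP(n,r)\cap\EKP(n,r)=0$ (all layer transitions become nonsingular square pencils, and a determinant form of positive degree on $\mathbb{P}^{r-1}$ must vanish somewhere) is a pleasant self-contained alternative to the paper, which instead quotes the known fact $\EIP(kE_r)\cap\EKP(kE_r)=\add k$ and transports it through the functor $\mathfrak{F}$.

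However, there is a genuine gap in your proof of $\tau$-closure. You assert that the test modules $X_{\alpha}^{1}$ lie in $\EKP(n,r)$ and then invoke the orthogonality $\Hom(\EIP(n,r),\EKP(n,r))=0$ to conclude $\Hom(M,X_{\alpha}^{1})=0$. This assertion is false: $\Hom(X_{\alpha}^{1},X_{\alpha}^{1})\neq 0$, so $X_{\alpha}^{1}$ fails the $\Hom$-vanishing criterion that defines $\EKP(n,r)$; concretely, one checks that $\im\alpha_{X_{\alpha}^{0,1}}=0$, so $\ker\alpha_{X_{\alpha}^{0,1}}$ is all of $X_{\alpha}^{0,1}$ rather than its top graded piece. (The paper explicitly records $X_{\alpha}^{n-2,1}\in\mathcal{F}\setminus\EKP(n,r)$, i.e.\ the inclusion $\EKP(n,r)\subset\mathcal{F}$ is proper precisely because of these modules.) What is actually true, and what the paper proves, is the weaker statement that each summand $X_{\alpha}^{i,1}$ is \emph{torsion-free} for the pair $(\mathcal{T},\mathcal{F})$: for $i\geq 1$ this holds because $(X_{\alpha}^{i,1})_{0}=0$ while every nonzero module in $\EIP(n,r)$ is generated in degree $0$ with nonzero degree-$0$ part; for $i=0$ it holds because $X_{\alpha}^{0,1}$ is generated by its one-dimensional degree-$0$ component, so any nonzero $\EIP$-submodule would be all of $X_{\alpha}^{0,1}$, which is not in $\EIP(n,r)$. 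Your argument therefore needs these structural facts about the $X_{\alpha}^{i,1}$ (support, one-dimensional generating component) rather than membership in $\EKP(n,r)$; without them the step $\Hom(M,X_{\alpha}^{1})=0$ is unjustified, and with it the preinjective claim also stalls. The remainder of your proof (injectives lie in $\EIP(n,r)$, duality via $D$) is correct.
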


We can specialize our results to the case $n=2$: The algebra $B(2,r)$ is the path algebra $\mathcal{K}_r$ of the $r$-Kronecker and has wild representation type if and only if $r > 2$. The Auslander-Reiten quiver of the wild hereditary algebra $B(2,r)$ consists of a preprojective component, a preinjective component and infinitely many (regular) components of type $\mathbb{Z}A_{\infty}$ \cite{ri78}. We summarize our main results for $B(2,r), \ r >2,$ as follows, contrasting the findings for $r=2$:
\begin{theo}[B] Let $r >2$, $n \leq p$ and let $\Gamma$ be the Auslander-Reiten quiver of $B(2,r)$.
\begin{enumerate}[(i)]
\item Let $\mathcal{C}$ be a regular component of $\Gamma$. Then $\EIP(2,r) \cap \mathcal{C}$ and $\EKP(2,r) \cap \mathcal{C}$ are non-empty disjoint cones. The size of the gap $\mathcal{W}(\mathcal{C}) \in \mathbb{N}_0$ between these cones is an invariant of $\mathcal{C}$.
\item For each $n \in \mathbb{N}$, there exists a regular component $\mathcal{C}$ of $\Gamma$ such that $\mathcal{W}(\mathcal{C})>n$.
\item If $\mathcal{W}(\mathcal{C})=0$, then every object in $\mathcal{C}$ has constant Jordan type.
\item  If $\mathcal{W}(\mathcal{C})=1$, then either $\mathcal{C} \subset \CJT(2,r)$ or apart from the cones $\EIP(2,r) \cap \mathcal{C}$ and $\EKP(2,r) \cap \mathcal{C}$, there are no other objects of constant Jordan type in $\mathcal{C}$.
\end{enumerate}
\end{theo}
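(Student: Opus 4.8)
The plan is to translate everything into the concrete ``matrix pencil'' picture for $\mathcal{K}_r = B(2,r)$: a representation $M$ is a pair of vector spaces $M_0,M_1$ together with the operators $x_{[a]}^M = \sum_i a_i\,\phi_i^M \colon M_0 \to M_1$ for $[a]=[a_0:\dots:a_{r-1}]\in\mathbb{P}^{r-1}$, so that $M\in\EIP(2,r)$ iff every $x_{[a]}^M$ is surjective, $M\in\EKP(2,r)$ iff every $x_{[a]}^M$ is injective, and $M\in\CJT(2,r)$ iff $\rk x_{[a]}^M$ is independent of $[a]$ (for $n=2$ the Jordan type is encoded by this single rank). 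In particular $\EIP(2,r),\EKP(2,r)\subseteq\CJT(2,r)$. I would fix a regular component $\mathcal{C}\cong\mathbb{Z}A_\infty$, write its quasi-simple modules as a single $\tau$-orbit $\{X_j\}_{j\in\mathbb{Z}}$ with $\tau X_j=X_{j-1}$, and denote by $X_j[\ell]$ the indecomposable of quasi-length $\ell$ with quasi-socle $X_j$ and quasi-top $X_{j+\ell-1}$.

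For (i), I would first use the homological characterization to express membership as the vanishing of $\Hom$-spaces against the fixed $\mathbb{P}^{r-1}$-family, pass to $\tau$-twists via the Auslander--Reiten formula $\Ext^1(U,M)\cong D\Hom(M,\tau U)$, and then exploit the monotonicity of $\dim\Hom$ along rays and corays of $\mathbb{Z}A_\infty$-components (Kerner): the short exact sequences $0\to X_j\to X_j[\ell]\to X_{j+1}[\ell-1]\to 0$ and $0\to X_j[\ell-1]\to X_j[\ell]\to X_{j+\ell-1}\to 0$ show that the relevant $\Hom$ is non-increasing along corays and non-decreasing along rays. Together with the closure properties from Theorem A ($\EIP(2,r)$ closed under quotients and $\tau$, $\EKP(2,r)$ under submodules and $\tau^{-1}$), this forces $\EIP(2,r)\cap\mathcal{C}$ to be the coray-cone determined by a top-index threshold $a$ (containing exactly the quasi-simples $X_j$ with $j\le a$) and $\EKP(2,r)\cap\mathcal{C}$ the ray-cone determined by a socle-index threshold $b$ (containing exactly $X_j$ with $j\ge b$). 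Non-emptiness comes from Kerner's vanishing $\Hom(\tau^m U,X_j)=0$ for $m\gg0$, which places $X_{j+m}$ in $\EKP(2,r)$ and, dually, far-left quasi-simples in $\EIP(2,r)$. Disjointness is immediate from Theorem A, since a non-zero module in $\EIP(2,r)\cap\EKP(2,r)$ would give the identity as a non-trivial map $\EIP(2,r)\to\EKP(2,r)$; this also forces $a<b$. Finally I would set $\mathcal{W}(\mathcal{C}):=b-a-1=\#\{j:a<j<b\}$; since $\tau$ shifts all indices by $-1$ it preserves this count, so $\mathcal{W}(\mathcal{C})$ is a genuine invariant of the component.

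The clean input for (iii) and (iv) is a block-triangular rank formula. Any $M=X_p[\ell]$ has a quasi-composition series with factors $X_p,\dots,X_{p+\ell-1}$, and cutting it at the threshold yields a short exact sequence $0\to M'\to M\to M''\to 0$ in which $M'$ collects the low-index factors and $M''$ the high-index ones. If $\mathcal{W}(\mathcal{C})=0$, so $b=a+1$, then every factor has index $\le a$ or $\ge a+1$; choosing the cut at $a\mid a{+}1$ gives $M'\in\EIP(2,r)$ and $M''\in\EKP(2,r)$, whence $x_{[a]}^M$ is block upper-triangular with surjective top-left and injective bottom-right block. The standard identity $\rk\!\begin{pmatrix}A&B\\0&C\end{pmatrix}=\rk A+\rk C$ (valid when $A$ is onto or $C$ is one-to-one) then gives $\rk x_{[a]}^M=\dim M'_1+\dim M''_0$ for all $[a]$, so $M\in\CJT(2,r)$; this proves (iii). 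For (iv), where $\mathcal{W}(\mathcal{C})=1$, the same cut leaves a single middle factor $X_{a+1}$ from the gap, so $x_{[a]}^M$ becomes block upper-triangular with a surjective first and injective last block and one genuinely variable middle block $\widetilde G_{[a]}$; clearing the extreme blocks shows $\rk x_{[a]}^M=\dim M'_1+\dim M''_0+\rk\widetilde G_{[a]}$. Hence $M$ has constant Jordan type iff $\rk\widetilde G_{[a]}$ is constant, and I would prove the dichotomy by showing this rank is constant for all straddling $M$ precisely when the gap quasi-simples already have constant rank---in which case a further filtration argument upgrades this to $\mathcal{C}\subseteq\CJT(2,r)$---and is otherwise non-constant, so that no object outside the two cones is of constant Jordan type.

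For (ii) I would exhibit, for each $n$, a regular component with wide gap by constructing a quasi-simple $\mathcal{K}_r$-module $E$ (for instance via the functor $\mathfrak{F}$ and an explicit generalization of the $W$-modules) whose rank function $[a]\mapsto\rk x_{[a]}$ is neither everywhere maximal nor everywhere minimal along a stretch of $n+1$ consecutive quasi-simples $E,\tau^{-1}E,\dots,\tau^{-n}E$; such a stretch lies entirely outside $\EIP(2,r)\cup\EKP(2,r)$ and forces $\mathcal{W}(\mathcal{C})>n$. The main obstacle throughout is the control of these $\Hom$- and rank-functions on the regular components: pinning down the exact cone boundaries in (i) requires the sharp monotonicity and eventual-vanishing results of Kerner's theory, the dichotomy in (iv) requires showing that a rank-jump in a single gap layer cannot be repaired by the extension data, and (ii) requires an explicit lower bound on the gap---each of which is where the real work lies.
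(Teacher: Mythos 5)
Your treatment of parts (i), (iii) and (iv) is essentially sound. For (i) you follow the same route as the paper: Kerner's vanishing lemma gives non-emptiness of $\EKP(2,r)\cap\mathcal{C}$ (and dually of $\EIP(2,r)\cap\mathcal{C}$), the homological characterization together with the Auslander--Reiten formula and closure under predecessors resp.\ successors gives the cone shape, and Theorem A gives disjointness and the invariance of the gap. For (iii) and (iv) you replace the paper's inductive use of the lemma ``if $0\to A\to B\to C\to 0$ is exact with $A\in\EIP(2,r)$, then $B$ has constant rank iff $C$ does'' by an explicit filtered-rank computation along the quasi-composition series; since surjectivity of the bottom graded piece and injectivity of the top graded piece force $\ker x^M_{[a]}$ into the middle filtration step and $\im x^M_{[a]}$ to contain the bottom one, your identity $\rk x^M_{[a]}=\dim M'_1+\dim M''_0+\rk\widetilde G_{[a]}$ is correct, and it yields the dichotomy in (iv) in an arguably cleaner form: a module outside the two cones has constant rank iff the single gap quasi-simple does.

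The genuine gap is part (ii). You propose to exhibit, for each $n$, a quasi-simple module $E$ such that $E,\tau^{-1}E,\dots,\tau^{-n}E$ all lie outside $\EIP(2,r)\cup\EKP(2,r)$, but this is exactly the hard content of the statement and no construction is given: controlling $\tau^{-k}E$ over a wild hereditary algebra for a range of $k$ growing with $n$ is not something one can do by writing down a representation and checking ranks. Worse, the source of examples you suggest --- generalized $W$-modules via $\mathfrak{F}$ --- produces components with $\mathcal{W}(\mathcal{C})=0$, since $\tau^{-1}(W^{(r)}_{m,2})$ already satisfies the equal kernels property, so the two boundary quasi-simples are adjacent. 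The paper avoids any explicit construction: it first shows that for a brickless regular component one has $|\rk\mathcal{C}|\le\mathcal{W}(\mathcal{C})$, where $\rk\mathcal{C}$ is Kerner's quasi-rank; the key point is that $\Hom(W_{\mathcal{C}},\tau^{-\mathcal{W}(\mathcal{C})-1}(W_{\mathcal{C}}))=0$ because the target lies in $\EKP(2,r)$ and the source in $\EIP(2,r)$, which bounds the quasi-rank from below by $-\mathcal{W}(\mathcal{C})$. It then invokes the theorem of Kerner and Lukas that the quasi-ranks of regular components of $\mathcal{K}_r$ are unbounded below. Without this external input (or an equally strong substitute) your argument for (ii) does not go through.
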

 
Our paper is organized as follows: In Section 1, we recall definitions and basic results and give a generalization of the $W$-modules defined in \cite{cfs09} to arbitrary rank. We introduce generalized Beilinson algebras and give a homological description of the categories $\CJT(n,r),\ \CR^j(n,r), \ \EIP(n,r)$ and $\EKP(n,r)$ in Section 2 and point out the special role that generalized $W$-modules play in $\EIP(n,r)$. In the final section, we restrict our investigations to modules of Loewy length two and give our more specific results on the $r$-Kronecker together with some examples.

\section{Generalized $W$-modules}
Let us first of all introduce the set up and recall the relevant concepts and some basic results from \cite{cafrpe08}, \cite{cafr09}, \cite{cfs09} and \cite{fp}. In doing so, we will present some definitions in a way that is suitable for our purposes.\\

Let $k$ be an algebraically closed field of characteristic $p>0$. Let $E_r=(\mathbb{Z}_p)^{\times r}$ be an elementary abelian $p$-group of rank $r\geq2$ with generators $g_1,\ldots,g_r$. Let furthermore $R=k[X_1,\ldots,X_r]$ be the polynomial ring in $r$ variables. Sending $X_i$ to $x_i:=g_i-1$ yields an isomorphism $k[X_1,\ldots,X_r]/(X_1^p,\ldots,X_r^p) \cong kE_r$ between the truncated polynomial ring and the group algebra of $E_r$. Consider furthermore the ideal $I=(X_1,\ldots,X_r)\subseteq R$ generated by all polynomials of degree one as well as the augmentation ideal $J=\rad(kE_r)=(x_1,\ldots,x_r)$ of $kE_r$. We let $\modi kE_r$ be the category of finitely generated $kE_r$-modules and $\modi_n (kE_r) \subset \modi kE_r$ be the full subcategory consisting of modules of  Loewy length at most $n$.\\

An algebra homomorphism $\alpha: k[T]/(T^p) \rightarrow kE_r$ is called \textit{p-point} if the pullback $\alpha^*(kE_r)$ is a free $k[T]/(T^p)$-module. Note that this is equivalent to saying that $\alpha(t)$ with $t:=T+(T^p)$ is an element in $\rad(kE_r) \backslash \rad^2(kE_r)$ \cite[p. 3]{cfs09}. Given such a $p$-point $\alpha$, for $M \in \modi kE_r$, we consider the linear operator $\alpha(t)_M : M \rightarrow M, \ m \mapsto \alpha(t).m$. The Jordan canonical form of $\alpha(t)_M$ entirely determines the isomorphism type of $M$ as a $k[T]/(T^p)$-module. The sequence of sizes of Jordan blocks is referred to as the {\it Jordan type} of $M$ corresponding to $\alpha$ and we write $\Jt(\alpha,M)=a_p[p] +\cdots + a_1[1]$, indicating that there are $a_i$ blocks of size $[i]$ for $1 \leq i \leq p$. If this Jordan type does not depend on the $p$-point we choose, we say that $M$ is of {\it constant Jordan type} $\Jt(M):=\Jt(\alpha,M)$.\\
 We say that $M \in \modi kE_r$ is of \textit{constant $j$-rank} for $j \in \mathbb{N}$ if the rank $\rk \alpha(t)^j_M$ is independent of our choice of $p$-point. Note that $M$ is of constant Jordan type iff $M$ is of constant $j$-rank for all $j \geq 1$ \cite[p. 11]{fp}. We denote the subcategories of $\modi kE_r$ consisting of such modules by $\CR^j(kE_r)$ and $\CJT(kE_r)$.\\
A module $M \in \modi kE_r$ is said to satisfy the \textit{equal images property} if $\im \alpha(t)_M=\rad M$ for all $p$-points $\alpha$. A module $M \in \modi kE_r$ is said to satisfy the \textit{equal kernels property} if $\ker \alpha(t)_M=\soc M$ for all $p$-points $\alpha$. We denote the corresponding subcategories of $\modi(kE_r)$ by $\EIP(kE_r)$ and $\EKP(kE_r)$, respectively.\\
In \cite[1.2, 1.7]{cfs09}, it is shown that it  suffices to check the above properties for all $p$-points $\alpha$ with $\alpha(t)=\alpha_1x_1+\cdots + \alpha_r x_r$ for a non-trivial element $(\alpha_1,\ldots,\alpha_r) \in k^r \backslash 0$.\\
Note that $M$ satisfies the equal images property if and only if its linear dual $M^*$ satisfies the equal kernels property. The category $\EIP(kE_r)$ is image-closed \cite[1.10]{cfs09}, and dually $\EKP(kE_r)$ is closed under taking submodules. We have $\EIP(kE_r) \cup \EKP(kE_r) \subseteq \CJT(kE_r)$ \cite[1.9]{cfs09} and furthermore $\EIP(kE_r) \cap \EKP(kE_r)=\add k$ \cite[4.4.3]{f11}, where $k$ is the trivial $kE_r$-module and $\add k$ the full subcategory of $\modi kE_r$ whose objects are direct sums of the trivial module $k$.\\

We now give a generalization of the $\mathbb{Z}_p \times \mathbb{Z}_p$-modules $W_{n,d}$ defined by Carlson, Friedlander and Suslin in \cite{cfs09} to elementary abelian $p$-groups of arbitrary rank. The authors show that these so-called $W$-modules are indecomposable equal images modules which play a prominent role in the category $\EIP(k(\mathbb{Z}_p \times \mathbb{Z}_p))$.
Whereas in \cite{cfs09}, the modules $W_{n,d}$ are defined via generators and relations, we give an alternative definition that is amenable to generalization to higher rank.\\

For all $n \in \mathbb{N},\ d \leq \min\left\{n,p\right\}$, we consider the $R$-module
$$M_{n,d}^{(r)}:=I^{n-d}/I^n.$$
By choice of $d$, the canonical action factors through $R/(X_1^p,\ldots,X_r^p)$, so that we can likewise study this module and its linear dual $W_{n,d}^{(r)}:=({M_{n,d}^{(r)}})^*$ over $kE_r$.\\
The module ${M_{3,2}^{(3)}}$ can be depicted as follows
\[ \begin{xy}
\xymatrix@R=2em@C=0.4em{
& & & \stackrel{x_1}{\bullet} \ar@{->}[llld] \ar@{-->}[ld] \ar@{~>}[rrrd] &&  \stackrel{x_2}{\bullet} \ar@{->}[llld] \ar@{-->}[ld] \ar@{~>}[rrrd]&&  \stackrel{x_3}{\bullet} \ar@{~>}[rrrd] \ar@{-->}[rd] \ar@{->}[ld]\\
{\bullet} &&  {\bullet}  && \bullet  && \bullet  && \bullet  && \bullet}
\end{xy} \]
where the dots represent the canonical basis elements given by the monomials in degree one and two and $\rightarrow, \ \dashrightarrow$ and $\rightsquigarrow$ denote the action of $x_1, x_2$ and $x_3$, respectively.
It is easy to see that in case $r=2$ we have $W_{n,d}^{(2)}=W_{n,d}$ as defined in \cite{cfs09}.
Modules of the form $M_{n,d}^{(r)}$ will be referred to as {\it $M$-modules} and modules of the form $W_{n,d}^{(r)}$ as {\it $W$-modules}, respectively. We furthermore set the convention that $M_{n,d}^{(r)}:=M_{n,n}^{(r)}$ and  $W_{n,d}^{(r)}:= W_{n,n}^{(r)}$ in case $d>n$.\\
The module $M_{n,d}^{(r)}$ satisfies the equal kernels property, since for all $(\alpha_1,\ldots,\alpha_r) \in k^r \backslash 0$ we have
$$\ker \left\{\sum_{i=1}^r{\alpha_ix_i}: M_{n,d}^{(r)} \rightarrow M_{n,d}^{(r)} \right\}=I^{n-1}/I^n=\soc(M_{n,d}^{(r)}).$$
Hence $W_{n,d}^{(r)}$ satisfies the equal images property. Some $W$-modules can be recognized as submodules of the group algebra $kE_r$, generalizing \cite[2.2]{cfs09}.
\begin{prop}
There is an isomorphism
$$W_{d,d}^{(r)} \cong \rad^{r(p-1)+1-d}(kE_r)$$
for $d\leq p$.
\end{prop}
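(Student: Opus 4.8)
The plan is to reduce the statement to the computation of an annihilator inside $kE_r$, exploiting that the group algebra is a commutative symmetric (hence self-dual) algebra. First I would identify $M_{d,d}^{(r)}$ concretely: by definition $M_{d,d}^{(r)} = I^{0}/I^{d} = R/I^{d}$, and since $d \le p$ we have $(X_1^p,\ldots,X_r^p) \subseteq I^p \subseteq I^d$, so that $R/I^d$ is a quotient of $kE_r = R/(X_1^p,\ldots,X_r^p)$, the kernel being the image $J^d = \rad^d(kE_r)$ of $I^d$. Thus $M_{d,d}^{(r)} \cong kE_r/\rad^d(kE_r)$, and dualizing gives $W_{d,d}^{(r)} = (M_{d,d}^{(r)})^* \cong (kE_r/\rad^d(kE_r))^*$.

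Next I would pass through the self-duality of $kE_r$. Applying $(-)^*$ (exact over the field $k$) to $0 \to \rad^d \to kE_r \to kE_r/\rad^d \to 0$ realizes $(kE_r/\rad^d)^*$ as the subspace $\{f \in (kE_r)^* : f|_{\rad^d}=0\}$, which is a $kE_r$-submodule of $(kE_r)^*$ precisely because $\rad^d$ is an ideal. To transport this back into $kE_r$ I would fix the $kE_r$-linear isomorphism $kE_r \xrightarrow{\sim} (kE_r)^*$ sending a group element $g$ to its dual basis vector $g^*$; a short check shows $h \cdot g^* = (hg)^*$, so this map intertwines left multiplication with the \emph{contragredient} action on the dual, which is the structure relevant for the $\EIP$/$\EKP$ duality. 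Under this isomorphism the polar of $\rad^d$ corresponds to the orthogonal complement of $\rad^d$ with respect to the symmetrizing form $\langle a,b\rangle = \lambda(ab)$ (with $\lambda$ the coefficient of the identity), and for an ideal this orthogonal complement coincides with the annihilator. Hence $W_{d,d}^{(r)} \cong \ann_{kE_r}(\rad^d) = \{a \in kE_r : a\,\rad^d = 0\}$ as $kE_r$-modules.

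Finally I would compute the annihilator in the monomial basis $\{x^c = x_1^{c_1}\cdots x_r^{c_r} : 0 \le c_i \le p-1\}$. Since $\rad^d$ is spanned by the $x^b$ with $|b| \ge d$, a monomial $x^c$ lies in $\ann(\rad^d)$ iff $x^{c+b}=0$ for every $b$ with $|b|=d$, i.e. iff one cannot choose $b$ with $|b|=d$ and $b_i \le p-1-c_i$ for all $i$. Such a $b$ exists exactly when $\sum_i(p-1-c_i) = r(p-1)-|c| \ge d$, so $x^c \in \ann(\rad^d)$ iff $|c| \ge r(p-1)+1-d$. Therefore $\ann_{kE_r}(\rad^d) = \rad^{r(p-1)+1-d}(kE_r)$, which yields the asserted isomorphism. (As a sanity check, the complementation $c \mapsto (p-1)\mathbf{1}-c$ matches $\dim \rad^{r(p-1)+1-d}$ with $\dim R/I^d$.)

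I expect the main obstacle to be the middle step: obtaining a genuine module isomorphism rather than a mere equality of dimensions. The initial identification and the combinatorial annihilator computation are routine, but some care is needed to invoke the symmetric-algebra self-duality with the correct contragredient action, so that $(kE_r/\rad^d)^* \cong \ann_{kE_r}(\rad^d)$ holds as $kE_r$-modules and not just as $k$-spaces.
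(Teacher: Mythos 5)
Your argument is correct and follows essentially the same route as the paper: identify $M_{d,d}^{(r)}$ with $kE_r/\rad^d(kE_r)$ and use the symmetric (Frobenius) self-duality of $kE_r$ to realize $(kE_r/\rad^d(kE_r))^*$ as the orthogonal complement of $\rad^d(kE_r)$ inside $kE_r$, which is then identified with $\rad^{r(p-1)+1-d}(kE_r)$. The only cosmetic differences are that the paper takes the symmetrizing form to be the coefficient of $x_1^{p-1}\cdots x_r^{p-1}$ and reads off the orthogonal complement directly, whereas you use the coefficient of the identity, pass through the annihilator of $\rad^d(kE_r)$, and make the monomial computation explicit.
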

\begin{proof}
Observe  that $kE_r$ is isomorphic to the restricted enveloping algebra of an $r$-dimensional abelian Lie algebra with trivial $p$-map \cite[\S 5]{fs88} and is equipped with the structure of a Frobenius algebra where the projection $\tau: kE_r \rightarrow k$ onto the coefficient of $x_1^{p-1} \cdots x_r^{p-1}$ defines a non-degenerate associative symmetric bilinear-form
$$(.,.): kE_r \times kE_r \rightarrow k, (a,b):=\tau(ab),$$
see \cite{ber64}. Since there is an isomorphism $kE_r/\rad^d(kE_r) \cong M^{(r)}_{d,d}$, the claimed isomorphism of $kE_r$-modules follows from the associativity of $(.,.)$ together with the isomorphism
$$W_{d,d}^{(r)} \cong (kE_r/\rad^d(kE_r))^* \cong (\rad^d(kE_r))^{\perp} =\rad^{r(p-1)+1-d}(kE_r).$$
\end{proof}

Observe furthermore that the algebraic group $\GL_r(k)$ acts on the $r$-dimensional vector space $\bigoplus_{i=1}^r kX_i$ and thereby on $R$ and $kE_r$ via automorphisms, leaving $I$ and $J$ invariant. Moreover, consider the action of $\GL_r(k)$ on $\modi kE_r$ sending $M \in \modi kE_r$ to its {\it $g$-twist $M^{(g)}$} for $g \in \GL_r(k)$, where $M^{(g)}$ is the $kE_r$-module with underlying vector space $M$ and action given by $x.m:=(g^{-1}.x)m$. We call a module {\it $\GL_r(k)$-stable} if there is an isomorphism $M \cong M^{(g)}$ for all $g \in \GL_r(k)$.  Since $\GL_r(k)$ acts on $\bigoplus_{i=1}^r kX_i \backslash 0$ with one orbit, $\GL_r(k)$-stable modules are necessarily of constant Jordan type.
\begin{prop}\label{stable}
Let $n \in \mathbb{N}, d \leq p$. The $kE_r$-modules $M_{n,d}^{(r)}$ and $W_{n,d}^{(r)}$ are $\GL_r(k)$-stable.
\end{prop}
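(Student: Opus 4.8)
The plan is to let $g \in \GL_r(k)$ act on everything in sight and to check that the resulting linear automorphism of the underlying space of $M_{n,d}^{(r)}$ intertwines the original action with the twisted one. Recall from the preceding paragraph that the linear action of $g$ on $\bigoplus_{i=1}^r kX_i$ extends to a graded algebra automorphism $\sigma_g$ of $R$ and descends to an automorphism of $kE_r$; the descent is precisely where $\cha k = p$ enters, since $(\sum_j a_j X_j)^p = \sum_j a_j^p X_j^p$ forces the ideal $(X_1^p,\ldots,X_r^p)$ to be $\sigma_g$-stable. Being graded, $\sigma_g$ preserves every power $I^m = \bigoplus_{j \geq m} R_j$, in particular $I^{n-d}$ and $I^n$, and hence induces a $k$-linear automorphism $\phi_g$ of $M_{n,d}^{(r)} = I^{n-d}/I^n$ given by $\phi_g(\overline{m}) = \overline{\sigma_g(m)}$.

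First I would treat the $M$-modules. Since $\sigma_g$ is an algebra automorphism and the module structure is given by multiplication, $\phi_g$ satisfies $\phi_g(y\cdot m) = \sigma_g(y)\cdot\phi_g(m)$ for all $y \in kE_r$ and $m \in M_{n,d}^{(r)}$. Writing the twisted action as $y \ast m = \sigma_g^{-1}(y)\cdot m$ in accordance with the definition of $M^{(g)}$, one computes
$$\phi_g(y\ast m) = \phi_g\!\left(\sigma_g^{-1}(y)\cdot m\right) = \sigma_g\!\left(\sigma_g^{-1}(y)\right)\cdot\phi_g(m) = y\cdot\phi_g(m),$$
so that $\phi_g \colon (M_{n,d}^{(r)})^{(g)} \to M_{n,d}^{(r)}$ is an isomorphism of $kE_r$-modules. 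As $g$ was arbitrary, $M_{n,d}^{(r)}$ is $\GL_r(k)$-stable.

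For the $W$-modules I would argue by duality rather than repeat the computation. Over the commutative algebra $kE_r$ the linear dual $N^*$ carries the transpose action $(y\cdot f)(n) = f(y\cdot n)$, and a direct comparison of the two module structures on the space $N^*$ shows that twisting and dualizing commute on the nose, $(N^*)^{(g)} = (N^{(g)})^*$. Applying the (contravariant) duality functor to the isomorphism $(M_{n,d}^{(r)})^{(g)} \cong M_{n,d}^{(r)}$ obtained above therefore yields
$$(W_{n,d}^{(r)})^{(g)} = \left(({M_{n,d}^{(r)}})^*\right)^{(g)} = \left((M_{n,d}^{(r)})^{(g)}\right)^* \cong ({M_{n,d}^{(r)}})^* = W_{n,d}^{(r)},$$
so $W_{n,d}^{(r)}$ is $\GL_r(k)$-stable as well. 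The calculations are routine; the only points demanding care are the descent of $\sigma_g$ to $kE_r$ (which rests on the characteristic being $p$), keeping the direction of the twist consistent, and the observation that for the module dual over the commutative ring $kE_r$ duality commutes with twisting, which is what reduces the $W$-case to the $M$-case.
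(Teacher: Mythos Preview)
Your proof is correct and follows the same route as the paper's: exhibit the linear automorphism of $I^{n-d}/I^n$ coming from the $\GL_r(k)$-action on $R$ and check that it intertwines the ordinary and twisted $kE_r$-actions, then pass to the $W$-modules by the compatibility of twisting with linear duality. The only cosmetic difference is the direction of the isomorphism (the paper uses $m\mapsto g^{-1}.m$ to go $M_{n,d}^{(r)}\to (M_{n,d}^{(r)})^{(g)}$, while you use $\phi_g$ in the opposite direction), and you spell out in more detail the descent of $\sigma_g$ to $kE_r$ and the verification of equivariance that the paper leaves implicit.
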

\begin{proof}
Since dualizing and twisting are compatible, it suffices to prove the first claim: The module $I^{n-d}/I^n$ is a subfactor of the $\GL_r(k)$-module $R$ and the map
$$\varphi: I^{n-d}/I^n \rightarrow I^{n-d}/I^n, m \mapsto g^{-1}.m$$ 
defines an isomorphism $M_{n,d}^{(r)} \cong (M_{n,d}^{(r)})^{(g)}$.
\end{proof}
\vspace{0,5 cm}
In the following, we will make use of the graded structures of the algebras $R$ and $kE_r$ and their module categories, respectively.\\
Graded (Artin) algebras and their modules categories were studied by Gordon and Green \cite{gg82}, \cite{gg82b}. An algebra $\Lambda$ is {\it $\mathbb{Z}^n$-graded} for some $n \in \mathbb{N}$ if $\Lambda$ affords a vector space decomposition $\Lambda=\bigoplus_{i \in \mathbb{Z}^n} \Lambda_i$ such that $\Lambda_i \Lambda_j \subseteq \Lambda_{i+j}$ for all $i,j \in \mathbb{Z}^n$. We denote by $|i|:=\sum_{j=1}^n i_j$ the {\it value} of $i=(i_1,\ldots,i_n) \in \mathbb{Z}^n$. Graded ideals are defined canonically. A $\Lambda$-module $M$ is {\it $\mathbb{Z}^n$-graded} if $M=\bigoplus_{i \in \mathbb{Z}^n} M_i$ such that $\Lambda_i M_j \subseteq M_{i+j}$ for all $i,j \in \mathbb{Z}^n$.\\
The category $\modi_{\mathbb{Z}^n}\Lambda$ has the finitely generated $\mathbb{Z}^n$-graded $\Lambda$-modules as objects and the sets of morphisms $\Hom_{\Lambda}^{\mathbb{Z}^n}(M,N)$ are the $\Lambda$-linear maps $\varphi: M \rightarrow N$ with $\varphi(M_i) \subseteq N_i$ for all $i \in \mathbb{Z}^n$. Furthermore, the {\it $i$-th shift functor} $[i]: \modi_{\mathbb{Z}^n}\Lambda \rightarrow \modi_{\mathbb{Z}^n}\Lambda$ is defined on objects $M \in \modi_{\mathbb{Z}^n}\Lambda$ to be $M[i]$ where $M[i]_j:=M_{j-i}$. Morphisms are left unchanged.\\
If $M, N \in \modi_{\mathbb{Z}^n}\Lambda$ afford gradings $M=\bigoplus_{i \in \mathbb{Z}^n} M_i$ and $N=\bigoplus_{i \in \mathbb{Z}^n} N_i$, then
$\Hom_{\Lambda}(M,N)$ affords a grading $\Hom_{\Lambda}(M,N)=\bigoplus_{i \in \mathbb{Z}^n}\Hom_{\Lambda}(M,N)_i$ as a module over the $\mathbb{Z}^n$-graded algebra $\End_{\Lambda}(N)$, where $\Hom_{\Lambda}(M,N)_i=\left\{ \varphi \in \Hom_{\Lambda}(M,N) | \varphi(M_j) \subseteq N_{i+j} \; \forall j \in \mathbb{Z}^n \right\}$.\\

Now $R=\oplus_{i \in \mathbb{Z}^r} R_i$ is a $\mathbb{Z}^r$-graded algebra, where $R_i$ is the $k$-span of the polynomial $X_1^{i_1} \cdots X_r^{i_r}$   for all $i=(i_1,\ldots,i_r) \in \mathbb{N}_0^r$ and $R_i=0$ else. Hence all non-trivial homogeneous components are one-dimensional.\\
Since the ideal $(X_1^p,\ldots,X_r^p)$ is homogeneous with respect to this grading, $kE_r$ inherits the  $\mathbb{Z}^r$-grading from $R$. Furthermore it is $I=\bigoplus_{i \in \mathbb{Z}^r \atop i \neq 0} R_i$ and hence
 $M_{n,d}^{(r)}=I^{n-d}/I^n$ has both as an $R$- and $kE_r$-module a canonical $\mathbb{Z}^r$-grading 
$$M_{n,d}^{(r)}=\bigoplus_{i \in \mathbb{Z}^r} M_i$$
where $M_i$ is the vector space spanned by $x_1^{i_1} \cdots x_r^{i_r}=X_1^{i_1} \cdots X_r^{i_r} + I^n$ for all $i \in \mathbb{N}_0^r$ with $n-d \leq |i| \leq n-1$, and $M_i=0$ else. Endowed with this grading, $M_{n,d}^{(r)}$ is generated by its components $M_i$ with $i \in \mathbb{N}_0^r,\ |i|=n-d$.
Observe that the $\mathbb{Z}^r$-grading induces a $\mathbb{Z}$-grading both on the algebra and the graded modules in a canonical fashion via $R_i=\bigoplus_{|(j_1,\ldots,j_r)|=i}R_{(j_1,\ldots,j_r)}$ and $M_i=\bigoplus_{|(j_1,\ldots,j_r)|=i}M_{(j_1,\ldots,j_r)}$ for all $i \in \mathbb{Z}$.

\begin{theorem}
For $r\geq2$ and $n \geq d>1$, $d \leq p$, we have an isomorphism of $\mathbb{Z}^r$-graded rings
$$\End_{kE_r}(M_{n,d}^{(r)}) \cong kE_r/J^d \oplus \bigoplus_{i \in \mathbb{Z}^r \atop |i|=d-1}k[i]^{s_i}$$
where the right-hand side denotes the trivial extension of $kE_r/J^d$ by a sum of shifts of the trivial $kE_r$-bimodule $k$.
In particular, $\End_{kE_r}(M_{n,d}^{(r)})$ is local and commutative.
\end{theorem}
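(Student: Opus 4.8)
The plan is to compute $\End_{kE_r}(M_{n,d}^{(r)})$ directly as a $\mathbb{Z}^r$-graded vector space and only afterwards recognize its ring structure. Write $M=M_{n,d}^{(r)}=I^{n-d}/I^n$, and recall that since $d\leq p$ the $kE_r$-action agrees with the $R$-action, so $\End_{kE_r}(M)=\End_R(M)$ and I may argue over $R$. Because $M=\bigoplus_{n-d\leq|i|\leq n-1}M_i$ with each nonzero $M_i=k\,x^i$ one-dimensional and $M$ generated by its bottom layer $M_{\langle n-d\rangle}:=\bigoplus_{|i|=n-d}M_i$, every homogeneous endomorphism is determined by its restriction to that layer; comparing the supports of source and target then shows $\End(M)=\bigoplus_{0\leq|a|\leq d-1}\End(M)_a$. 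Multiplication by monomials supplies a graded ring homomorphism $\psi\colon kE_r/J^d\to\End(M)$, $\overline{x^a}\mapsto(m\mapsto x^a m)$, well defined because $x^a M\subseteq I^n=0$ once $|a|\geq d$, and injective because $x^a x^i\neq0$ in $M$ for any $|i|=n-d$.

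The heart of the argument is a degree-by-degree analysis that splits at $|a|=d-1$. Fix $a$ and write $\varphi(x^i)=c_i\,x^{i+a}$ for $|i|=n-d$, where $c_i\in k$ is forced to vanish when $i+a\notin\mathbb{N}_0^r$ (then $M_{i+a}=0$). The Koszul relations $x_s x^i=x_t x^j$ (for $i+e_s=j+e_t$) of the presentation of $M$ impose $c_i=c_{i+e_s-e_t}$ precisely when $x^{i+a+e_s}\neq0$ in $M$, i.e. when $|i+a+e_s|=n-d+|a|+1\leq n-1$. For $0\leq|a|\leq d-2$ this degree bound always holds, so one gets a genuine system of identifications: if $a\in\mathbb{N}_0^r$ one connects all generators through the connected graph of unit-transfer moves $i\leftrightarrow i+e_s-e_t$ and concludes that all $c_i$ coincide, whence $\End(M)_a=k\,\psi(x^a)$; if $a\notin\mathbb{N}_0^r$, say $a_u<0$, one transports each generator with $i+a\geq0$ downward along coordinate $u$ until it leaves the admissible region, forcing $c_i=0$ and hence $\End(M)_a=0$. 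By contrast, for $|a|=d-1$ the relevant target $x^{i+a+e_s}$ sits in degree $n$ and vanishes, so no relation constrains $\varphi$: extending any degree-$a$ linear map $M_{\langle n-d\rangle}\to\soc M=M_{\langle n-1\rangle}$ by zero on the higher layers yields an endomorphism (both $\varphi(x_s m)$ and $x_s\varphi(m)$ vanish there), giving $\End(M)_a\cong\Hom_k(M_{\langle n-d\rangle},\soc M)_a$ and thus, beyond multiplication by $x^a$, a space of $s_a$ extra "top" endomorphisms.

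With the graded pieces in hand I assemble the ring. Choosing in each degree $|a|=d-1$ a complement $V_a$ to the line $k\,\psi(x^a)$ (all of $\End(M)_a$ when $a\notin\mathbb{N}_0^r$), set $V=\bigoplus_{|a|=d-1}V_a$. Since any product of two top-degree maps lands in total degree $2(d-1)\geq d$, and since $\psi(J)\cdot V$ and $V\cdot\psi(J)$ land in degree $\geq d$, all of which vanish, $V$ is a two-sided ideal with $V^2=0$ on which $J$ acts by zero, i.e. a sum of shifted copies $k[a]^{s_a}$ of the trivial $kE_r$-bimodule. As $\psi$ splits the projection $\End(M)\twoheadrightarrow\End(M)/V\cong kE_r/J^d$, this exhibits $\End(M)$ as the trivial extension $kE_r/J^d\ltimes V$, yielding the displayed isomorphism of $\mathbb{Z}^r$-graded rings. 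Finally $kE_r/J^d\cong R/I^d$ is local and commutative with residue field $k$, and adjoining the square-zero, symmetric, centrally acting ideal $V$ preserves both properties, giving the "in particular" statement.

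The step I expect to be the main obstacle is the range $0\leq|a|\leq d-2$: making the connectivity argument airtight — both the connectedness of the unit-transfer graph on compositions of $n-d$ (so that all coefficients coincide when $a\geq0$) and the descent forcing vanishing when $a$ has a negative entry — together with a careful verification that the Koszul relations I use really generate all relations of $M$ in the relevant degrees, including the boundary case $d=2$ where $2(d-1)=d$.
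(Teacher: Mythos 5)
Your argument is correct and follows essentially the same route as the paper: the same embedding $kE_r/J^d\hookrightarrow\End(M)$ by multiplication, the same degree split at $|a|=d-2$ versus $|a|=d-1$, the same connectivity argument via unit-transfer moves on the compositions of $n-d$ (the paper's operations $-\mathds{1}_t+\mathds{1}_{t'}$), the identification of the degree-$(d-1)$ part with linear maps from the generating layer to the socle, and the square-zero assembly into a trivial extension. Your closing worry about whether the Koszul relations generate all relations of $M$ is not actually needed: the constraints $c_i=c_{i+e_s-e_t}$ follow from $kE_r$-linearity of a given endomorphism alone, and the only place sufficiency matters (degree $d-1$) is already covered by your direct check that both $\varphi(x_sm)$ and $x_s\varphi(m)$ vanish.
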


\begin{rem}
\textnormal{By computing $\Hom$-spaces, we will moreover show that the $s_i$ are uniquely determined and that they are all equal to zero iff $n=d$.}
\end{rem}
\begin{proof} 
We claim that for $n \in \mathbb{N}, d \leq \min\left\{n, p \right\}$, there is a monomorphism 
$$\iota: kE_r/J^d \rightarrow \End_{kE_r}(M_{n,d}^{(r)})$$
of $\mathbb{Z}^r$-graded $k$-algebras. Multiplication by an element of $kE_r$ clearly yields an endomorphism of $M_{n,d}^{(r)}$ and we obtain a homomorphism 
$kE_r \rightarrow \End_{kE_r}(M_{n,d}^{(r)})$
of $k$-algebras which obviously respects the $\mathbb{Z}^r$-grading. Since $\ann_{kE_r}(M_{n,d}^{(r)})=J^d$, $\iota$ is injective.\\
We now show that $\iota$ induces an isomorphism of homogeneous components
\begin{equation}\label{hc}
 ( kE_r/J^d)_i \cong \End(M_{n,d}^{(r)})_i
\end{equation}
for $i \in \mathbb{Z}^r$ with $|i| \leq d-2$. \\
\textit{Proof of (\ref{hc}):}
Since $M_{d,d}^{(r)} \cong  kE_r/J^d$, the isomorphism in (\ref{hc}) is obvious for $n=d$. We thus assume $n >d$.
Let $\varphi_i \in \End(M_{n,d}^{(r)})_i$ and $|i| \leq d-2$. Recall that all non-trivial homogeneous components of $M_{n,d}^{(r)}$ are one-dimensional and the module is generated by its homogeneous components $M_j=\left< x_1^{j_1}\cdots x_r^{j_r} \right>_k$ with $|j|=n-d$.\\
For all $1 \leq t \leq r$, we denote by $1_t$ the element in $\mathbb{N}_0^r$ with the $t$-th entry being equal to 1 and all others being equal to 0. For all $1 \leq t, t' \leq r$ we denote by $-\mathds{1}_t+ \mathds{1}_{t'}$ the operation on $\mathcal{K}=\left\{ \kappa \in \mathbb{N}_0^r |  \ |\kappa|=n-d \right\}$ given by $-\mathds{1}_t+ \mathds{1}_{t'}(\kappa)=\kappa-1_t+1_{t'}$ if $\kappa_t \neq 0$ and $-\mathds{1}_t+ \mathds{1}_{t'}(\kappa)=\kappa$ else. Observe that every non-empty subset of $\mathcal{K}$ that is closed under all such operations is equal to $\mathcal{K}$.\\
Let us first of all show that $\varphi_i=0$ if $i$ has a negative entry $i_l$ for some $1 \leq l \leq r$. We know that $\varphi_i$ certainly vanishes on those $M_j$, $|j|=n-d$, with $j_l=0$.\\
Now assume $\varphi_i(M_k)=0$, i.e. $\varphi_i(x_1^{k_1}\cdots x_r^{k_r})=0$ for some $k \in \mathbb{N}_0^r,\ |k|=n-d$. Let furthermore $t \in \left\{1,\ldots,r\right\}$ such that $k_t  \neq 0$. For all $t' \in \left\{1,\ldots,r \right\}$, we have
\begin{equation}\label{cv}
\ x_t \varphi_i(x_1^{k_1}\cdots x_r^{k_r}\frac{x_{t'}}{x_t})= x_{t'} \varphi_i (x_1^{k_1}\cdots x_r^{k_r}).
\end{equation}
By our assumption, we have $|i| \leq d-2$ which implies $\varphi_i(x_1^{k_1}\cdots x_r^{k_r}\frac{x_{t'}}{x_t})=0$ and hence $\varphi_i(M_{k-1_t+1_{t'}})=0$. Thus $\left\{\kappa \in \mathcal{K}|\varphi_i(M_{\kappa})=0\right\}$ is non-empty and closed under operations of the form $-\mathds{1}_t+ \mathds{1}_{t'}$ and hence equal to $\mathcal{K}$. We thus obtain $\varphi_i(M_{\kappa})=0$ for all $\kappa \in \mathcal{K}$ and hence $\varphi_i=0$.\\
For $i \in \mathbb{N}_0^r$, $|i| \leq d-2$, we use the fact that non-trivial homogeneous components are one-dimensional and obtain $\varphi_i(x_1^{k_1}\cdots x_r^{k_r})=c_k  x_1^{k_1+i_1}\ldots x_r^{k_r+i_r}$ for all $k \in \mathbb{N}_0^r, |k|=n-d$ and scalars $c_k$. Comparing coefficents in (\ref{cv}) yields that $\varphi_i$ is multiplication by an element of the form $cx_1^{i_1} \cdots x_r^{i_r}$ with $c \in k$. This proves our claim (\ref{hc}).\\

In case $i \in \mathbb{N}_0^r,\ |i|=d-1$, we have an isomorphism of vector spaces 
$$\End_{kE_r}(M_{n,d}^{(r)})_{i} \cong \bigoplus_{j \in \mathbb{N}_0^r \atop |j|=n-d} \Hom_k((M_{n,d}^{(r)})_j, (M_{n,d}^{(r)})_{i+j})$$
and hence $\dim_k \End_{kE_r}(M_{n,d}^{(r)})_{i}/\iota((kE_r/J^d)_i) = \dim_k I^{n-d} -1$. The right-hand term is equal to zero if and only if $n=d$. For $i \in \mathbb{Z}^r \backslash \mathbb{N}_0^r,\ |i|=d-1$, we have
$$\End_{kE_r}(M_{n,d}^{(r)})_{i} \cong \bigoplus_{j \in \mathbb{N}_0^r \atop {|j|=n-d \atop i+j \in \mathbb{N}_0^r}} \Hom_k((M_{n,d}^{(r)})_j,(M_{n,d}^{(r)})_{i+j})$$
with $( kE_r/J^d)_i=(0)$ and the right-hand term being equal to zero iff $n=d$.
 Since furthermore $\End(M_{n,d}^{(r)})_i=0$ for $i \in \mathbb{Z}^r, |i| \geq d$, and maps of degree $d-1$ vanish when composed with maps of degree greater than zero, we obtain the above structure of the endomorphism ring.
\end{proof}

\begin{cor}\label{brick} Let $n \in \mathbb{N}$ and $1<d\leq p$.
\begin{enumerate}[(i)]
\item The $kE_r$-module $M_{n,d}^{(r)}$ is indecomposable. 
\item We have $k \cong\End_{kE_r}(M_{n,d}^{(r)})_0=\End_{kE_r}^{\mathbb{Z}}(M_{n,d}^{(r)})$, 
i.e. $M_{n,d}^{(r)}$ is a brick in $\modi_{\mathbb{Z}} kE_r$.
\end{enumerate} 
\end{cor}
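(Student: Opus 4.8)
The plan is to deduce both assertions directly from the structural description of $\End_{kE_r}(M_{n,d}^{(r)})$ established in the preceding theorem, so that essentially no fresh computation is needed; the corollary is really just a reading-off of that decomposition.

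For (i), I would invoke the standard fact that a nonzero module whose endomorphism ring is local is indecomposable. The theorem asserts that $\End_{kE_r}(M_{n,d}^{(r)})$ is local under its hypothesis $n \geq d > 1$, $d \leq p$, which is subsumed by the present assumption $1 < d \leq p$ (the module being finite-dimensional over $k$, its ordinary endomorphism ring is exactly the algebra described there). Hence $M_{n,d}^{(r)}$ is indecomposable, and nothing further is required.

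For (ii), the key observation is to isolate the homogeneous component of $\mathbb{Z}^r$-degree $0$ in the isomorphism $\End_{kE_r}(M_{n,d}^{(r)}) \cong kE_r/J^d \oplus \bigoplus_{|i|=d-1} k[i]^{s_i}$. The summand $kE_r/J^d$ contributes $(kE_r/J^d)_0 = k\cdot\bar 1 \cong k$ in degree $0$, while each shift $k[i]$ is concentrated in degree $i$ with $|i| = d-1 > 0$ and therefore contributes nothing in degree $0$; this yields $\End_{kE_r}(M_{n,d}^{(r)})_0 \cong k$. To obtain the second equality I would use that the relevant $\mathbb{Z}$-grading is the one induced by the value map $i \mapsto |i|$, so that $\End^{\mathbb{Z}}_{kE_r}(M_{n,d}^{(r)})$ is the sum of all homogeneous components $\End_{kE_r}(M_{n,d}^{(r)})_i$ with $|i| = 0$. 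By the theorem these vanish unless $i = 0$: the trivial-extension part lives entirely in value $d-1$, and the part coming from $kE_r/J^d$ is supported on $\mathbb{N}_0^r$, whose only element of value $0$ is $0$ itself. Consequently $\End^{\mathbb{Z}}_{kE_r}(M_{n,d}^{(r)}) = \End_{kE_r}(M_{n,d}^{(r)})_0 \cong k$, which is precisely the statement that $M_{n,d}^{(r)}$ is a brick in $\modi_{\mathbb{Z}} kE_r$.

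Since the argument is a bookkeeping exercise over the theorem, I do not expect a genuine obstacle; the one point demanding care is to rule out a surviving component of nonzero $\mathbb{Z}^r$-degree but value $0$. This is already guaranteed inside the theorem's proof, where it was shown that $\varphi_i = 0$ whenever $i$ has a negative entry and $|i| \leq d-2$. As $d \geq 2$ forces $d-2 \geq 0$, this covers every $i \neq 0$ with $|i| = 0$, confirming that the only value-$0$ contribution is $\End_{kE_r}(M_{n,d}^{(r)})_0 = k$.
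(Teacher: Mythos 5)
Your proof is correct and is exactly the reading-off of the endomorphism-ring theorem that the paper intends (it supplies no separate proof of the corollary): locality gives indecomposability, and the value-$(d-1)$ support of the trivial-extension part together with the vanishing of $\varphi_i$ for $i$ with a negative entry and $|i|\leq d-2$ pins down $k$ as the only value-$0$ contribution. The only point you silently pass over is the degenerate case $d>n$, which the paper's convention reduces to $M_{n,n}^{(r)}$ and which is trivial anyway.
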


Jordan types that can be realized via indecomposable modules are of special interest. Counting polynomials, we obtain:
\begin{prop} For $n \in \mathbb{N}, d \leq \min\left\{n, p \right\}$, we have
$$\Jt(M_{n,d}^{(r)} )={{r+n-d-1}\choose{n-d}}[d]+\sum_{i=1}^{d-1}\binom{r+n-2-i}{n-i}[i]=\Jt(W_{n,d}^{(r)})$$
and in particular for $n=d \leq p$
$$\Jt(M_{n,n}^{(r)})=[n]+\sum_{i=1}^{n-1}{{r+n-i-2} \choose {n-i}}[i]=\Jt(W_{n,n}^{(r)}).$$
\end{prop}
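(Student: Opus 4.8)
The plan is to use that $M_{n,d}^{(r)}$ is $\GL_r(k)$-stable (Proposition \ref{stable}) and hence of constant Jordan type, so that $\Jt(M_{n,d}^{(r)})$ may be read off from one conveniently chosen $p$-point. Since $x_1 \in \rad(kE_r)\backslash\rad^2(kE_r)$ defines a $p$-point, I compute the constant Jordan type as the multiset of Jordan block sizes of the nilpotent operator ``multiplication by $x_1$'' on the vector space $M_{n,d}^{(r)}=I^{n-d}/I^n$, whose canonical basis consists of the residues of the monomials $x_1^{a_1}\cdots x_r^{a_r}$ with $n-d\le|a|\le n-1$.

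First I would organize this basis into $x_1$-strings. Grouping the basis monomials by their tail $(a_2,\dots,a_r)$ and writing $s=a_2+\cdots+a_r$, each group is a $k[x_1]$-submodule on which $x_1$ acts by $x_1^{a_1}\cdot(\text{tail})\mapsto x_1^{a_1+1}\cdot(\text{tail})$, the top monomial (of total degree $n-1$) being sent to $0$ as its image lies in $I^n$. Hence each group is a single Jordan block whose size is the number of admissible exponents $a_1$. Inspecting the range $\max(0,n-d-s)\le a_1\le n-1-s$ shows this size equals $d$ when $0\le s\le n-d$, equals $n-s$ when $n-d< s\le n-1$, and that there is no block for $s\ge n$. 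Here the hypothesis $d\le p$ enters: it guarantees that the canonical action is the one induced along the $p$-point, and that the largest occurring block has size $d$.

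It then remains to count, for each block size, how many tails produce it. The number of tails $(a_2,\dots,a_r)\in\mathbb{N}_0^{r-1}$ with $a_2+\cdots+a_r=s$ is $\binom{s+r-2}{r-2}$ by stars and bars. A block of size $i$ with $1\le i\le d-1$ arises exactly from $s=n-i$, contributing the coefficient $\binom{n-i+r-2}{r-2}=\binom{r+n-2-i}{n-i}$, as claimed. Blocks of size $d$ arise from all $s$ with $0\le s\le n-d$, so their number is $\sum_{s=0}^{n-d}\binom{s+r-2}{r-2}$, which collapses via the hockey-stick identity to $\binom{r+n-d-1}{n-d}$, the remaining coefficient. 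Setting $n=d$ specializes the top coefficient to $\binom{r-1}{0}=1$ and recovers the displayed special case. Finally $\Jt(W_{n,d}^{(r)})=\Jt(M_{n,d}^{(r)})$, since $W_{n,d}^{(r)}=(M_{n,d}^{(r)})^*$ and dualizing replaces each operator $\alpha(t)_M$ by its transpose, which has the same Jordan block sizes.

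The computation is essentially bookkeeping; the only genuinely delicate points are, first, that the underlying space $I^{n-d}/I^n$ retains \emph{all} monomials of degree in $[n-d,n-1]$ (including those whose individual exponents exceed $p-1$) and that $x_1$ annihilates a basis monomial precisely when multiplication raises its degree to $n$, not through the relation $x_1^p=0$; and second, the collapse of $\sum_{s=0}^{n-d}\binom{s+r-2}{r-2}$ to a single binomial coefficient. Both are routine, so I expect no serious obstacle.
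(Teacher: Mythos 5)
Your proof is correct, and it is precisely the computation the paper compresses into the phrase ``Counting polynomials, we obtain'': decomposing the monomial basis of $I^{n-d}/I^n$ into $x_1$-strings indexed by tails, counting tails by stars and bars, and collapsing the size-$d$ count via the hockey-stick identity, with constant Jordan type (from $\GL_r(k)$-stability) justifying the reduction to the single $p$-point $x_1$. No discrepancy with the paper's (unwritten) argument.
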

The indecomposability of $W$- and $M$-modules of Loewy length greater than one over the algebra $kE_2$ follows directly from \cite[4.2]{cfs09}, according to which the Jordan type $\sum_{i=1}^p a_i [i]$ of a module with the equal images property is such that $a_{i-1} \neq 0$ whenever $a_i \neq 0$, $i \geq 2$. Taking into account that $\EIP(kE_r)$ and $\EKP(kE_r)$ are closed under direct summands and $\Jt(W_{n,d})=(n-d+1)[d] + \sum_{i=1}^{d-1}[i]$, these modules are hence indecomposable if $d \geq 2$. In case $r>2$, this conclusion does not seem to follow from the computation of Jordan types.\\

Moreover, for $r=2$, the indecomposable equal images modules of Loewy length 2 are just the modules $W_{n,2}$ \cite[4.1]{cfs09}. We will show in the following sections that in case $r>2$, the situation is completely different and there is no hope to parametrize the indecomposable equal images modules of Loewy length 2 in the same fashion. It seems that $W$-modules are thus not ``ubiquitous'' in $\EIP(kE_r)$ if $r>2$.

\makeatletter
\@removefromreset{theorem}{section}
\@addtoreset{theorem}{subsection}
\renewcommand{\thetheorem}{\arabic{section}.\arabic{subsection}.\arabic{theorem}}
\makeatother

\section{Equal images modules for generalized Beilinson algebras}
In order to understand the subcategories of $\modi  kE_r$ introduced in the previous section, we will now consider the category $\modi_{\mathbb{Z}} kE_r$ of $\mathbb{Z}$-graded modules over the $\mathbb{Z}$-graded algebra $kE_r$. When studying objects in $\modi_{\mathbb{Z}} kE_r$ that have a bounded support, the generalized Beilinson algebra $B(n,r)$ comes into play. It turns out that we can define certain analogs of our subcategories of $\modi_n kE_r$ as subcategories of $\modi B(n,r)$ which exhibit interesting properties and behave nicely when it comes to Auslander-Reiten theory. Our main results strongly depend on our homological characterization of these subcategories. 
\subsection{General approach}

We recall from \cite{gg82} that there is a faithful exact functor $$\mathfrak{F}: \modi_{\mathbb{Z}} kE_r \rightarrow \modi  kE_r$$
 referred to as the forgetful functor since it ``forgets'' the grading on objects. $\mathfrak{F}$ preserves indecomposability and has the property that the fibre of an indecomposable object in the essential image of $\mathfrak{F}$ consists of the shifts $M[i],\ i \in \mathbb{Z},$ of a certain indecomposable object $M=\bigoplus_{i \in \mathbb{Z}}M_i \in \modi_{\mathbb{Z}} kE_r$ \cite[4.1]{gg82}. Note furthermore that $\mathfrak{F}$ is not dense.\\
If $M=\bigoplus_{i \in \mathbb{Z}} M_i \in \modi_{\mathbb{Z}} kE_r$, then $\supp(M)=\left\{i \in \mathbb{Z}|M_i \neq 0 \right\}$ is called the {\it support} of $M$. An object in the essential image of $\mathfrak{F}$ is called \textit{gradable}. We say that $M \in \modi kE_r$ is \textit{$J$-gradable} for $J \subseteq \mathbb{Z}$ if $M \cong \mathfrak{F}(\bigoplus_{i \in \mathbb{Z}} M_i)$ for some $\bigoplus_{i \in \mathbb{Z}} M_i \in \modi_{\mathbb{Z}} kE_r$ such that $\supp(\bigoplus_{i \in \mathbb{Z}} M_i ) \subseteq J$.\\
Using the terminology of \cite{gmmz}, the positively graded algebra $\Lambda=kE_r$ is {\it standardly graded}, i.e. $\Lambda_0$ is a direct product of $k$, $\Lambda_i$ is finite dimensional for all $i \geq 0$ and $\Lambda_i\Lambda_j=\Lambda_{i+j}$ for all $i,j \geq 0$. We call $M=\bigoplus_{i \in \mathbb{Z}}M_i \in \modi_{\mathbb{Z}} kE_r$ \textit{standardly graded} if $M$ is generated by $M_i$, where $i = \min \supp(M)$. Dually, we say that $M$ is \textit{costandardly graded} if $M$ is cogenerated by $M_i$, where $i = \max \supp (M)$. We refer to their images under $\mathfrak{F}$ as {\it standardly gradable} and {\it costandardly gradable} objects, respectively.\\
 For $2\leq n \leq p$, we now consider the full subcategory $\mathcal{C}_{[0,n-1]}$ of $\modi_{\mathbb{Z}} kE_r$ containing those objects $M=\bigoplus_{i \in \mathbb{Z}}M_i \in \modi_{\mathbb{Z}} kE_r$ with $\supp(M) \subseteq [0,n-1]:=\left\{0,\ldots,n-1\right\}$. Hence the essential image of $\mathfrak{F}| _{\mathcal{C}_{[0,n-1]}}$ consists of the $[0,n-1]$-gradable objects in $\modi kE_r$. Observe furthermore that $\mathcal{C}_{[0,n-1]}$ is equivalent to $\modi B(n,r)$, the module category of a generalized Beilinson algebra, where $B(n,r)$ is defined as follows:\\
Let $E(n,r)$ be the path algebra of the quiver with $n$ vertices and $r$ arrows between vertices $i$ and $i+1$ for all $0 \leq i \leq n-1$.

$$\begin{xy}
\xymatrix{
0 \ar@/^1pc/[r]^{\gamma_1^{(0)}} \ar@/_1pc/[r]_ {\gamma_r^{(0)}} \ar@{}[r]|{\vdots} & 1 \ar@/^1pc/[r]^{\gamma_1^{(1)}} \ar@/_1pc/[r]_ {\gamma_r^{(1)}}  \ar@{}[r]|{\vdots} & 2 \ar@{}[r]|{\cdots} & n-2 \ar@/^1pc/[r]^{\gamma_{1}^{({n-2})}} \ar@/_1pc/[r]_ {\gamma_{r}^{({n-2})}}  \ar@{}[r]|{\vdots} & n-1 }
\end{xy}$$

Now let $B(n,r)$ be the factor algebra $E(n,r)/I$ where $I$ is generated by the commutativity relations $\gamma^{(j)}_i \gamma^{(j-1)}_{k}-\gamma^{(j)}_{k}  \gamma^{(j-1)}_{i}$ for all $i,k \in \left\{1,\ldots,r\right\}, j \in \left\{1,\ldots,n-2\right\}$.
The equivalence between $\mathcal{C}_{[0,n-1]}$ and $\modi B(n,r)$ is such that $M=M_0 \oplus \cdots \oplus M_{n-1} \in \mathcal{C}_{[0,n-1]}$ is a module for $B(n,r)$ where $M_i=e_i M$ for the primitive orthogonal idempotents $e_i \in B(n,r)$ corresponding to the vertex $i$. Hence we use this notation both for objects in $\mathcal{C}_{[0,n-1]}$ and $\modi B(n,r)$. The action of $x_j$ on  elements in $M_i$ corresponds to the action of $\gamma_j^{(i)}$ on elements in $e_iM$.\\
In the following, we thus regard $\modi B(n,r)$ as a full subcategory of $\modi_{\mathbb{Z}} kE_r$ and we will see in the next section that we gain a lot by viewing $\mathcal{C}_{[0,n-1]}$ as the module category for a bound quiver algebra. A general introduction to representation theory of quivers can be found in \cite{ass06}.\\
For all $0 \leq i \leq n-1$, we denote by $S(i)$, $P(i)$ and $I(i)$ the simple, the projective and the injective indecomposable $B(n,r)$-module corresponding to the vertex $i$.\\
Restricting $\mathfrak{F}$ to $\modi B(n,r)$ yields a functor
$$\mathfrak{F}_{(n,r)}: \modi B(n,r) \rightarrow \modi_n kE_r.$$

We now define subcategories of $\modi B(n,r)$ that correspond to the full subcategories \linebreak $\CR^j_n(kE_r) \subset \CR^j(kE_r) $, $\CJT_n(kE_r) \subset \CJT(kE_r)$, $\EIP_n(kE_r) \subset \EIP(kE_r)$ as well as \linebreak $\EKP_n(kE_r) \subset \EKP(kE_r)$ containing modules of Loewy length at most $n$.\\
Let therefore $\alpha \in k^r \backslash 0$. Now for the element $\tilde{\alpha}=\sum_{i=0}^{n-2}(\alpha_1 \gamma_1^{(i)} + \cdots + \alpha_r \gamma_r^{(i)}) \in B(n,r)$ and $M=\bigoplus_{i=0}^{n-1}M_i \in \modi B(n,r)$, left-multiplication with $\tilde{\alpha}$ yields a linear operator
$$\alpha_M : M \rightarrow M$$
such that for $1 \leq j \leq n-1$, $(\alpha_M)^j$ coincides with the left-multiplication with the element $\sum_{i=0}^{n-j-1}( (\alpha_1 \gamma_1^{(i+j-1)} + \cdots + \alpha_r \gamma_r^{(i+j-1)})  \cdots (\alpha_1 \gamma_1^{(i)} + \cdots + \alpha_r \gamma_r^{(i)}) ) \in B(n,r)$.

\begin{defi}For $n \leq p, r \geq 2$, we define full subcategories of $\modi B(n,r)$ as follows:
\begin{enumerate}[(a)]
\item $\EIP(n,r) := \left\{M \in \modi B(n,r)|  \im(\alpha_M)=\bigoplus_{i=1}^{n-1}M_i \ \ \forall \ \alpha \in k^r\backslash 0 \right\}$,
\item $\EKP(n,r) := \left\{M \in \modi B(n,r)|  \ker(\alpha_M)=M_{n-1} \ \ \forall \ \alpha \in k^r\backslash 0 \right\}$,
\item $\CR^j(n,r) := \left\{ M \in \modi B(n,r)| \exists c_j \in \mathbb{N}_0:  \rk (\alpha_{M})^j=c_j  \ \ \forall \ \alpha \in k^r\backslash 0 \right\}$,
\item $\CJT(n,r):= \bigcap_{j=1}^n  \CR^j(n,r)$.
\end{enumerate}
\end{defi}

\begin{rem}
\textnormal{A module $M \in \EIP(n,r)$ is standardly graded and $M_0=0$ implies $M=0$. Dually, a module $M \in \EKP(n,r)$ is costandardly graded and $M_{n-1}=0$ implies $M=0$.}
\end{rem}
\vspace{0,3cm}
Note that the duality $D: \modi B(n,r) \rightarrow \modi B(n,r)$ induced by relabelling the vertices in the reversed order and taking the linear dual is such that $D \EIP(n,r) = \EKP(n,r)$. Moreover observe that we have $\EIP(n,r) \cup \EKP(n,r) \subset \CR^j(n,r)$ for all $j \geq 1$.

\begin{prop}\label{func}
The restriction of $\mathfrak{F}_{(n,r)}$ to \newline $\mathcal{X} \in \left\{\EIP(n,r), \EKP(n,r), \CR^j(n,r), \CJT(n,r) \right\}$ induces a faithful exact functor
$$\mathfrak{F}_{\mathcal{X}}: \mathcal{X} \rightarrow \modi_n kE_r$$ such that
\begin{enumerate}[(i)]
\item for $\mathcal{X}=\EIP(n,r)$, $\mathfrak{F}_{\mathcal{X}}$ reflects isomorphisms and the essential image consists of the standardly gradable objects in $\EIP_n(kE_r)$.
\item for $\mathcal{X}=\EKP(n,r)$, $\mathfrak{F}_{\mathcal{X}}$ reflects isomorphisms and the essential image consists of the costandardly gradable objects in $\EKP_n(kE_r)$.
\item for $\mathcal{X}=\CR^j(n,r)$, the essential image of $\mathfrak{F}_{\mathcal{X}}$ consists of the $\left [0,n-1 \right]$-gradable objects in $\CR^j_n(kE_r)$.
\end{enumerate}
\end{prop}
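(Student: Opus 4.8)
The plan is to deduce everything from two facts established in the setup: that the forgetful functor $\mathfrak{F}: \modi_{\mathbb{Z}} kE_r \to \modi kE_r$ is faithful and exact \cite{gg82}, and that under the identification of $\modi B(n,r)$ with the full subcategory $\mathcal{C}_{[0,n-1]}$ the operator $\alpha_M$ is literally left multiplication by $\tilde\alpha$, which $\mathfrak{F}$ carries to the operator $\alpha(t)_{\mathfrak{F}(M)}$ associated with the linear $p$-point $\alpha(t) = \alpha_1 x_1 + \cdots + \alpha_r x_r$. Faithfulness and exactness of each $\mathfrak{F}_{\mathcal{X}}$ are then inherited from $\mathfrak{F}$ by restriction to a full subcategory, and since every $M \in \mathcal{C}_{[0,n-1]}$ satisfies $J^n M = 0$, the functor indeed lands in $\modi_n kE_r$. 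Throughout I would invoke \cite[1.2, 1.7]{cfs09} to test the defining (in)equalities only on the linear $p$-points, which are exactly the $\alpha \in k^r \setminus 0$ used in the definitions of $\EIP(n,r)$, $\EKP(n,r)$ and $\CR^j(n,r)$.

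For the image description in (i), the degree-raising property $\alpha_M(M_i) \subseteq M_{i+1}$ gives $\im(\alpha_M) \subseteq \bigoplus_{i=1}^{n-1} M_i$ for free, so the condition defining $\EIP(n,r)$ is equivalent to $\alpha_M(M_{i}) = M_{i+1}$ for all $i$ and all $\alpha$. First I would record that this forces $M$ to be generated by $M_0$, hence standardly graded (and $M_0 = 0 \Rightarrow M = 0$, as in the Remark); conversely, for any standardly graded $M$ one checks that $\rad^j \mathfrak{F}(M) = \bigoplus_{i \geq j} M_i$, using that $kE_r$ is standardly graded so that $(kE_r)_i = (kE_r)_{i-1}(kE_r)_1$, and in particular $\rad \mathfrak{F}(M) = \bigoplus_{i\geq 1} M_i$. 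With these two observations the defining condition of $\EIP(n,r)$ becomes $\im \alpha(t)_{\mathfrak{F}(M)} = \rad \mathfrak{F}(M)$ for all linear $\alpha$, which by the reduction above is exactly membership of $\mathfrak{F}(M)$ in $\EIP_n(kE_r)$. Running this equivalence in both directions --- and, for the reverse inclusion, shifting a standardly gradable $N \in \EIP_n(kE_r)$ so that $\min \supp = 0$ to produce a lift in $\mathcal{C}_{[0,n-1]}$ --- identifies the essential image with the standardly gradable objects of $\EIP_n(kE_r)$.

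To see that $\mathfrak{F}_{\EIP}$ reflects isomorphisms, I would use the grading identity $\rad^j \mathfrak{F}(M) = \bigoplus_{i\geq j} M_i$ just established: for a standardly graded module the homogeneous pieces are recovered intrinsically as the radical layers $M_j \cong \rad^j \mathfrak{F}(M)/\rad^{j+1}\mathfrak{F}(M)$. Hence any ungraded isomorphism $\mathfrak{F}(M) \cong \mathfrak{F}(M')$ between objects of $\EIP(n,r)$ respects the radical filtration and induces a graded isomorphism $M \cong \gr \mathfrak{F}(M) \cong \gr \mathfrak{F}(M') \cong M'$; equivalently, the standard grading is unique, so $\mathfrak{F}$ is injective on isomorphism classes of $\EIP(n,r)$. (On morphisms, reflecting isomorphisms is automatic, since a grading-preserving map with bijective underlying map has grading-preserving inverse.) This rigidity of the grading is precisely what fails for $\CR^j(n,r)$, whose objects may be shifted inside $[0,n-1]$.

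Statement (ii) I would obtain formally from (i) by applying the duality $D$ with $D\,\EIP(n,r) = \EKP(n,r)$, noting that $D$ interchanges standardly and costandardly graded modules and that $\mathfrak{F}$ is compatible with linear duality, so the essential image of $\mathfrak{F}_{\EKP}$ consists of the costandardly gradable objects of $\EKP_n(kE_r)$. For (iii) the argument is shorter: since $(\alpha_M)^j$ corresponds under $\mathfrak{F}$ to $\alpha(t)^j_{\mathfrak{F}(M)}$, one has $\rk(\alpha_M)^j = \rk \alpha(t)^j_{\mathfrak{F}(M)}$, so $M \in \CR^j(n,r)$ if and only if $\mathfrak{F}(M)$ has constant $j$-rank, i.e. lies in $\CR^j_n(kE_r)$; since $\mathfrak{F}|_{\CR^j(n,r)}$ realizes every object of $\mathcal{C}_{[0,n-1]}$ with this property, the essential image is exactly the $[0,n-1]$-gradable objects of $\CR^j_n(kE_r)$. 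The main obstacle I anticipate is the honest verification that membership in $\EIP(kE_r)$ and constancy of the rank functions may be tested on linear $p$-points alone --- i.e. that the higher-order terms of a general $p$-point do not change $\im\alpha(t)_M$ or $\rk \alpha(t)^j_M$ --- for which I would lean on \cite[1.2,1.7]{cfs09} and \cite{fp}; the remaining grading bookkeeping, while the longest part, is routine.
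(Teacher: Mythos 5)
Your proposal is correct, and its overall skeleton (identify $\alpha_M$ with $\alpha(t)_{\mathfrak{F}(M)}$ for the linear $p$-point, invoke \cite[1.2, 1.7]{cfs09} to reduce to linear $p$-points, match $\bigoplus_{i\geq 1}M_i$ with $\rad\mathfrak{F}(M)$ for standardly graded $M$) coincides with the paper's. The one place where you genuinely diverge is the proof that $\mathfrak{F}_{\EIP(n,r)}$ reflects isomorphisms. The paper gets this from the Gordon--Green structure result \cite[4.1]{gg82} --- the fibre of an indecomposable gradable module consists of the shifts $N[i]$ of a single graded indecomposable --- combined with the observation that $\supp N[i]=[i,l+i]$ rules out every nonzero shift of an object of $\EIP(n,r)$ remaining in $\EIP(n,r)$, and then Krull--Schmidt to pass from indecomposables to arbitrary direct sums. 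You instead argue that for a standardly graded module the grading is recovered intrinsically from the radical filtration, $M_j\cong\rad^j\mathfrak{F}(M)/\rad^{j+1}\mathfrak{F}(M)$, so that any ungraded isomorphism $\mathfrak{F}(M)\cong\mathfrak{F}(M')$ induces a graded one via the associated graded. This is a valid and arguably more self-contained route: it needs neither the fibre description nor Krull--Schmidt (it applies directly to decomposable objects), at the cost of the explicit verification that $\rad^j\mathfrak{F}(M)=\bigoplus_{i\geq j}M_i$, which the paper leaves implicit. Your treatment of (ii) by duality and of (iii) via $\rk(\alpha_M)^j=\rk\alpha(t)^j_{\mathfrak{F}(M)}$ matches the paper, and your reliance on \cite{fp} for testing constant $j$-rank on linear $p$-points is at the same level of rigor as the paper's own appeal to the ``general observation.''
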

\begin{proof}
Observe that for $M \in \modi B(n,r)$, the linear operator $\alpha_M$ given by $\alpha \in k^r \backslash 0$ corresponds to the linear operator $\alpha(t)_{\mathfrak{F}_{(n,r)}(M)}$ on $\mathfrak{F}_{(n,r)}(M)$ given by the $p$-point $\alpha$ with $\alpha(t)=\alpha_1x_1+ \cdots + \alpha_rx_r$.\\
$(i)$: With the preceding observation and in view of the fact that fibres of indecomposables are shifts on an indecomposable object \cite[4.1]{gg82}, it is easy to see that for an indecomposable object $M \in \modi_\mathbb{Z} kE_r$ we have $\mathfrak{F}_{(n,r)}(M) \in \EIP_n(kE_r)$ if and only if $M=N[i]$ for some object $N \in \EIP(n,r) \subset  \modi_{\mathbb{Z}}kE_r$.\\
Given $N \in \EIP(n,r)\backslash 0$, we have $\supp(N)=[0,l]$ for some $0 \leq l \leq n-1$. Thus we have $N[i] \notin \EIP(n,r)$ unless $i=0$ since $\supp N[i] =[i,l+i]$. Since $\mathfrak{F}_{(n,r)}$ commutes with direct sums and $\modi_n kE_r$ is a Krull-Schmidt category, $\mathfrak{F}_{\EIP(n,r)}$ thus reflects isomorphisms. Moreover, $M \in \EIP(n,r)$ is generated by $M_0$ and is hence standardly graded. Thus the essential image of $\mathfrak{F}_{\EIP(n,r)}$ consists of the standardly gradable objects in $\EIP_n(kE_r)$.\\
$(ii)$: Dual to $(i)$.\\
$(iii)$: Is clear in view of our general observation above.
\end{proof}
\begin{rem}
\textnormal{A direct consequence of Proposition \ref{func} is $\EIP(n,r) \cap \EKP(n,r) = (0)$, since $S(0)$ is the only simple $B(n,r)$-module in $\EIP(n,r)$, $S(n-1)$ the only simple $B(n,r)$-module in $\EKP(n,r)$ and $\EIP(kE_r) \cap \EKP(kE_r) = \add k$ \cite[4.4.3]{f11}.}
\end{rem}

\subsection{Homological characterization}
In this section, we will give a new point of view on our subcategories of $\modi B(n,r)$ which enables us to apply general methods from Auslander-Reiten theory.
The approach we present is inspired by work of Happel and Unger \cite{haun} on representations of the generalized Kronecker $\mathcal{K}_r$. The authors construct a representation $X=(X_1,X_2)$ over $\mathcal{K}_r$ corresponding to a given arrow $\gamma$ of $\mathcal{K}_r$  such that the representations $Y=(Y_1,Y_2)$ in the right-perpendicular category  $X^{\perp}$ are exactly those for which the operator $\gamma_Y: Y_1 \rightarrow Y_2$ corresponding to $\gamma$ is bijective \cite[2.1]{haun}.\\

Note that $P(i) \cong kE_r[i]/J^{n-i} kE_r[i]$ in $\modi_{\mathbb{Z}} kE_r$. Let $\alpha \in k^r \backslash 0$. Since $n-i \leq n-1 <p$, the map 
$$\alpha(i): P(i+1) \rightarrow P(i),\ e_{i+1} \mapsto \alpha_1\gamma^{(i)}_1 + \cdots +\alpha_r\gamma^{(i)}_r,$$ 
i.e. the right multiplication with $\alpha_1\gamma^{(i)}_1 + \cdots +\alpha_r\gamma^{(i)}_r$,
defines an embedding of $B(n,r)$-modules.
Composition yields embeddings 
$$\alpha(i)_j: P(i+j) \rightarrow P(i),\ e_{i+j} \mapsto ( \alpha_1\gamma^{(i+j-1)}_1 + \cdots + \alpha_r\gamma^{(i+j-1)}_r ) \cdots ( \alpha_1\gamma^{(i)}_1 + \cdots + \alpha_r\gamma^{(i)}_r )$$
for all $0 \leq i \leq n-2,\ 1 \leq j \leq n-i-1$. We let $ X_{\alpha}^{i,j}:=\coker \alpha(i)_j= P(i) / \alpha(i)_j(P(i+j))$.\\ 
For $1 \leq j \leq n-1$, $\alpha \in k^r \backslash 0$, we define 
$$X_{\alpha}^j=\bigoplus_{i=0}^{n-j-1} X_{\alpha}^{i,j}.$$
By definition, we obtain for the projective dimensions of these modules $\pd( X_{\alpha}^{i,j}) = 1$ and hence $\pd(X_{\alpha}^j)=1$. In the following, whenever we write $\Hom$ or $\Ext$, we refer to the category $\modi B(n,r)$.

\begin{theorem}\label{main}
We have
\begin{enumerate}[(a)]
\item $\EIP(n,r)= \left\{ M \in  \modi B(n,r)| \Ext^1(X_{\alpha}^1,M)=0 \; \forall \alpha \in k^r \backslash 0 \right\}$,
\item $\EKP(n,r)= \left\{ M \in  \modi B(n,r)| \Hom(X_{\alpha}^1,M)=0 \; \forall \alpha \in k^r \backslash 0 \right\}$,
\item $\CR^j(n,r)= \left\{ M \in  \modi B(n,r)| \exists c_j \ \dim_k \Ext^1(X_{\alpha}^j,M)=c_j\; \forall \alpha \in k^r \backslash 0 \right\}$.
\end{enumerate}
\end{theorem}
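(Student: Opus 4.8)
The statement characterizes the three subcategories $\EIP(n,r)$, $\EKP(n,r)$, $\CR^j(n,r)$ via $\Hom$ and $\Ext^1$ conditions against the modules $X_\alpha^j$. The key is the short exact sequence defining $X_\alpha^{i,j}$, namely
$$0 \to P(i+j) \xrightarrow{\alpha(i)_j} P(i) \to X_\alpha^{i,j} \to 0,$$
which exists because $\alpha(i)_j$ is an embedding. Applying $\Hom(-,M)$ to this gives a long exact sequence, and since $P(i)$ and $P(i+j)$ are projective (so $\Ext^1(P(i),M)=0$), I expect
$$0 \to \Hom(X_\alpha^{i,j},M) \to \Hom(P(i),M) \xrightarrow{\alpha(i)_j^*} \Hom(P(i+j),M) \to \Ext^1(X_\alpha^{i,j},M) \to 0.$$

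**The plan.**

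First I would identify $\Hom(P(i),M) \cong e_i M = M_i$, the standard fact that homomorphisms out of the indecomposable projective $P(i)$ are given by evaluation at the idempotent $e_i$. Under this identification I would check that the induced map $\alpha(i)_j^* : M_i \to M_{i+j}$ is precisely the restriction of the linear operator $(\alpha_M)^j$ to the homogeneous component $M_i$ — this is forced by the explicit description of $\alpha(i)_j$ as right-multiplication by $(\alpha_1\gamma_1^{(i+j-1)}+\cdots)\cdots(\alpha_1\gamma_1^{(i)}+\cdots)$ together with the compatibility between the action of $x_j$ and $\gamma_j^{(i)}$ recorded in the equivalence $\mathcal{C}_{[0,n-1]} \cong \modi B(n,r)$. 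From the four-term exact sequence above one then reads off, for each summand, that $\Hom(X_\alpha^{i,j},M) = \ker\big((\alpha_M)^j|_{M_i}\big)$ and $\Ext^1(X_\alpha^{i,j},M) = \coker\big((\alpha_M)^j|_{M_i}\big)$. Summing over $i$ (for the relevant range) and using that $\Hom$ and $\Ext^1$ commute with the direct sum $X_\alpha^j = \bigoplus_i X_\alpha^{i,j}$ yields the global statement
$$\Hom(X_\alpha^1,M) = \ker(\alpha_M)\cap\textstyle\bigoplus_{i=0}^{n-2}M_i, \qquad \Ext^1(X_\alpha^j,M)\cong \coker(\alpha_M)^j|_{\oplus_{i}M_i}.$$

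**Finishing each case.**

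For (b), since $\Hom(X_\alpha^1,M)$ measures the kernel of $\alpha_M$ on the components $M_0,\dots,M_{n-2}$ while the kernel on $M_{n-1}$ is automatically all of $M_{n-1}$ (there is nowhere for $\alpha_M$ to send it), vanishing of $\Hom(X_\alpha^1,M)$ for all $\alpha$ is equivalent to $\ker(\alpha_M)=M_{n-1}$ for all $\alpha$, which is exactly the defining condition of $\EKP(n,r)$. For (a), surjectivity of $\alpha(i)_1^* : M_i \to M_{i+1}$ for all $i$ is equivalent to $\Ext^1(X_\alpha^1,M)=0$; by the four-term sequence this says $\alpha_M$ maps $\bigoplus_{i=0}^{n-2}M_i$ onto $\bigoplus_{i=1}^{n-1}M_i$, i.e. $\im(\alpha_M)=\bigoplus_{i=1}^{n-1}M_i$, the defining condition of $\EIP(n,r)$. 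For (c), I would use $\dim_k \coker\big((\alpha_M)^j\big) = \dim_k M - \dim_k\im(\alpha_M)^j$, so that $\dim_k\Ext^1(X_\alpha^j,M)$ is constant in $\alpha$ if and only if $\rk(\alpha_M)^j$ is constant in $\alpha$; here I must be careful to account for the fixed contributions from the components $M_{n-j},\dots,M_{n-1}$ on which $(\alpha_M)^j$ vanishes identically, which shift the dimension count by a constant independent of $\alpha$ and hence do not affect constancy.

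**Main obstacle.**

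The routine homological algebra is straightforward; the step requiring genuine care is the identification of $\alpha(i)_j^*$ with $(\alpha_M)^j|_{M_i}$, including getting the direction of the map and the indexing right. The functor $\Hom(P(i+j),-)$ is contravariant in the first variable, so precomposition with the embedding $\alpha(i)_j : P(i+j)\to P(i)$ produces a map $M_i \to M_{i+j}$ in the \emph{forward} grading direction, matching multiplication by $\tilde\alpha^j$; verifying this sign/direction bookkeeping against the explicit formula for $(\alpha_M)^j$ given just before the definition of the subcategories is where the proof genuinely has to be pinned down.
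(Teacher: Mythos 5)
Your proposal is correct and follows essentially the same route as the paper: the paper also applies $\Hom(-,M)$ to the projective resolution $0 \to P(i+j) \xrightarrow{\alpha(i)_j} P(i) \to X_\alpha^{i,j} \to 0$, identifies the induced map $\Hom(P(i),M) \to \Hom(P(i+j),M)$ with $(\alpha_M)^j|_{M_i}: M_i \to M_{i+j}$ via a commutative diagram, and reads off (a), (b) from injectivity/surjectivity and (c) from the rank formula $\rk(\alpha_M)^j = \sum_{i}\dim_k M_{i+j} - \dim_k\Ext^1(X_\alpha^j,M)$. Your extra care about the direction of the induced map and the constant contribution from the top components is exactly the bookkeeping the paper's diagram encapsulates.
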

\begin{proof}
Consider the projective resolution
$0 \rightarrow P(i+j) \stackrel{\alpha(i)_j}{\longrightarrow} P(i) \rightarrow X_{\alpha}^{i,j} \rightarrow 0$
and, for $M \in \modi B(n,r)$, the exact sequence
$$0 \rightarrow \Hom(X_{\alpha}^{i,j}, M) \rightarrow \Hom(P(i), M) \rightarrow \Hom(P(i+j), M) \rightarrow \Ext^1(X_{\alpha}^{i,j} ,M) \rightarrow 0.$$
There is a commutative diagram

$$\begin{CD}
\Hom(P(i),M) @> \Hom(\alpha(i)_j,M)>> \Hom(P(i+j),M)\\
@VV \cong V @VV \cong V\\
M_i @> (\alpha_{M})^j|_{M_i}>> M_{i+j}
\end{CD}$$

whence
$$(\alpha_{M})^j|_{M_i}: M_{i} \rightarrow M_{i+j}$$
is surjective, resp. injective, if and only if $\Ext^1(X_{\alpha}^{i,j},M)=0$, resp. $\Hom(X_{\alpha}^{i,j},M)=0$. This already yields $(a)$ and $(b)$ and since $$\rk (\alpha_M)^j =\sum_{i=0}^{n-j-1} (\dim_k M_{i+j}-\dim_k\Ext^1(X_{\alpha}^{i,j},M))=\sum_{i=0}^{n-j-1}{\dim_k M_{i+j}}-\dim_k \Ext^1(X_{\alpha}^j,M),$$
we obtain $(c)$.
\end{proof}

Hence we have a homological description of the subcategories defined in $\S \ 2.1$ that involves a $\mathbb{P}^{r-1}$-family of $B(n,r)$-modules of projective dimension 1. At this juncture, we exploit fundamental homological properties of $\modi B(n,r)$ that do not hold in $\modi kE_r$.\\

Let us list some of the distinctive features of these modules.
\begin{prop}\label{prop} Let $\alpha \in k^r \backslash 0$ and let $\iota: B(n,r-1) \rightarrow B(n,r)$ be the embedding defined via $\gamma^{k}_l \mapsto \gamma^{k}_l$ for all $0 \leq k \leq n-2$ and $1 \leq l \leq r-1$.
\begin{enumerate}[(i)]
\item We have $\pd(X_{\alpha}^j)=1$ for all $1\leq j \leq n-1$.
\item  The module $X_{\alpha}^{i,j}$ is standardly graded and $\supp(X_{\alpha}^{i,j})=[i,n-1]$.
\item We have $\dim_k (X_{\alpha}^{i,j})_i=1$ and the module $X_{\alpha}^{i,j}$ is a brick in $\modi B(n,r)$.
\item All proper submodules of $X_{\alpha}^{n-2,1}$ are of the form $P(n-1)^{\oplus m}$ for some $m <r$.
\item The pullback  $\iota^*(X_{(0,\ldots,0,1)}^{i,1})$ is isomorphic to the projective $B(n,r-1)$-module $\tilde{P}(i)$.
\end{enumerate}
\end{prop}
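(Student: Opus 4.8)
The plan is to prove the five statements essentially independently, reducing each to the explicit description of the modules $X_\alpha^{i,j} = \coker \alpha(i)_j$ as cokernels of the projective presentation $0 \to P(i+j) \to P(i) \to X_\alpha^{i,j} \to 0$. Since each $X_\alpha^{i,j}$ is presented by this short exact sequence with both terms projective, parts (i) and (ii) should follow almost by inspection: (i) is already recorded in the text (the embedding $\alpha(i)_j$ realizes $X_\alpha^{i,j}$ as the cokernel of a monomorphism between projectives, so $\pd = 1$, and projective dimension of a direct sum is the max). For (ii), I would compute the support directly: $\supp P(i) = [i, n-1]$ since $P(i) \cong kE_r[i]/J^{n-i}kE_r[i]$, and the image of $\alpha(i)_j$ sits in degrees $\geq i+j$; the key point is that $\alpha(i)_j$ is injective in each graded degree it hits, so the quotient $X_\alpha^{i,j}$ retains a one-dimensional degree-$i$ component (the top of $P(i)$), giving $\supp(X_\alpha^{i,j}) = [i,n-1]$ and showing the module is generated in degree $i$, i.e. standardly graded.

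For (iii), the claim $\dim_k (X_\alpha^{i,j})_i = 1$ is immediate from (ii) since $(P(i))_i = e_i kE_r[i]/\cdots$ is one-dimensional (the idempotent) and lies outside the image of $\alpha(i)_j$ (which lands in degrees $\geq i+j > i$). To show $X_\alpha^{i,j}$ is a brick, I would argue that any graded endomorphism must send the degree-$i$ generator to a scalar multiple of itself (using that this component is one-dimensional), and since the module is generated in degree $i$, the endomorphism is determined by this scalar; combined with the fact that a nonzero such endomorphism is injective on the generator and hence an isomorphism, we get $\End \cong k$. Here I can appeal to the $\GL_r$-stability and the one-dimensionality of homogeneous components in the style of Corollary~\ref{brick}, adapting that argument since $X_\alpha^{i,j}$ is a graded quotient of a shifted projective with one-dimensional pieces in low degrees.

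For (iv), I would use the explicit structure of $X_\alpha^{n-2,1}$: by (ii) its support is $[n-2,n-1]$, with $(X_\alpha^{n-2,1})_{n-2}$ one-dimensional and $(X_\alpha^{n-2,1})_{n-1}$ of dimension $r-1$ (the quotient of the $r$-dimensional $(P(n-2))_{n-1}$ by the one-dimensional image of $\alpha$). A proper submodule cannot contain the degree-$(n-2)$ generator (else it would be everything), so it must be concentrated in degree $n-1$; since $x_j$ acts as zero on degree $n-1$ (being the top Loewy layer), every subspace of $(X_\alpha^{n-2,1})_{n-1}$ is a submodule, and such a submodule is a direct sum of copies of the simple $S(n-1) = P(n-1)$, giving $P(n-1)^{\oplus m}$ with $m < r$ (strict because $m = r-1$ would miss being proper only if $\dim = r-1$; I must double-check whether $m$ ranges up to $r-1$ or strictly below, and state it so that the full degree-$(n-1)$ part, of dimension $r-1$, corresponds to $m = r-1 < r$). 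For (v), I would compute the pullback $\iota^*$ directly on the presentation: restricting scalars along $\iota$ kills the $r$-th arrow data, and since $\alpha = (0,\ldots,0,1)$ means $\alpha(i)_1$ is multiplication by $\gamma_r^{(i)}$ alone — precisely the arrow $\iota$ forgets — the cokernel, when restricted to $B(n,r-1)$, should collapse to the projective $\tilde P(i)$ generated at vertex $i$ using only the first $r-1$ arrows. I expect part (iv) to be the main obstacle, since it requires care in identifying the submodule structure in the top degree and verifying the range of $m$; the other parts are short once the cokernel presentation and the graded dimensions are laid out.
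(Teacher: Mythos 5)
The paper states this proposition without proof, treating all five items as immediate consequences of the cokernel presentation $0\to P(i+j)\xrightarrow{\alpha(i)_j} P(i)\to X_{\alpha}^{i,j}\to 0$ and the identification $P(i)\cong kE_r[i]/J^{n-i}kE_r[i]$; your proposal supplies exactly that intended argument and is correct, including the right resolution of the range of $m$ in (iv) (the full top component gives $m=r-1<r$) and the right computation in (v) (killing $\gamma_r^{(i)}$ reduces the truncated polynomial ring in $r$ variables to the one in $r-1$ variables, i.e.\ to $\tilde P(i)$). The only two steps worth writing out explicitly are that in (ii) the strict inequality $\dim_k P(i+j)_d<\dim_k P(i)_d$ for $r\ge 2$ is what forces every degree of $[i,n-1]$ to survive in the quotient (injectivity alone only handles degree $i$), and that in (iii) every $B(n,r)$-endomorphism automatically preserves the decomposition $M=\bigoplus_l e_lM$, so your scalar argument on the one-dimensional generating component applies to all endomorphisms, not just graded ones.
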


In the following, we make use of Auslander-Reiten theory as well as torsion theory. At this point, we will briefly and in a somewhat informal way recall what Auslander-Reiten theory is about. A thorough introduction can be found in \cite[IV]{ass06}. The module category $\modi A$ is described in terms of the {\it Auslander-Reiten quiver} $\Gamma(A)$ of the algebra $A$ which is defined as follows:
\begin{enumerate}[(i)]
\item The vertices of $\Gamma(A)$ correspond to the isomorphism classes $[M]$ of indecomposable $A$-modules.
\item The arrows from $[N]$ to $[M]$ correspond to so-called {\it irreducible} maps $f: N \rightarrow M$, i.e. $f$ is neither a section nor a retraction and whenever $f=f_1f_2$, then either $f_1$ is a retraction or $f_2$ is a section.
\end{enumerate}
Each non-projective indecomposable module $M$ (non-injective indecomposable module $N$) gives rise to a uniquely determined short exact sequence, an {\it Auslander-Reiten} (or {\it almost split}) {\it sequence},
$$0 \rightarrow N \stackrel{f}{\rightarrow} \bigoplus_{i=1}^t E_i^{n_i} \stackrel{g}{\rightarrow} M \rightarrow 0$$
where $N$ ($M$) is indecomposable, the $E_i$ are pairwise non-isomorphic and indecomposable and the maps 
$f_{i_1},\ldots,f_{i_{n_i}}: N \rightarrow E_i$, $g_{i_1},\ldots, g_{i_{n_i}}: E_i \rightarrow M$ correspond to bases of the vector spaces of irreducible maps $N \rightarrow E_i$ and $E_i \rightarrow M$, respectively. We write $N=\tau(M)$ ($M=\tau^{-1}(N)$), where $\tau$ is referred to as the {\it Auslander-Reiten translation} of $M$ and we denote this in $\Gamma(A)$ by $[N] \dashleftarrow [M]$.\\
An indecomposable module $M$ is called {\it pre-projective} ({\it pre-injective}) if there is $n \in \mathbb{N}_0$ such that $\tau^n(M)$ ($\tau^{-n}(M)$) is projective (injective).
A module is referred to as {\it regular} if it is neither pre-injective nor pre-projective. Connected components of $\Gamma(A)$ that consist entirely of regular modules are then called regular.\\
Furthermore, we say that an indecomposable module $N$ is a {\it predecessor} ({\it successor}) of $M$ if there is a directed path from $[N]$ to $[M]$ ($[M]$ to $[N]$) in $\Gamma(A)$, i.e. a chain of irreducible maps from $N$ to $M$ ($M$ to $N$). We denote the set of all predecessors and successors by $(\rightarrow M)$ and $(M \rightarrow )$, respectively.\\

Given an algebra $A$, a pair $(\mathcal{T},\mathcal{F})$ of full subcategories of $\modi A$ is called {\it torsion pair} if the following conditions are satisfied
\begin{enumerate}[(a)]
\item $\Hom_A(M,N)=0$ for all $M \in \mathcal{T}$, $N \in \mathcal{F}$.
\item $\Hom_A(M,-)|_{\mathcal{F}}=0$ implies $M \in \mathcal{F}$.
\item $\Hom_A(-,N)|_{\mathcal{T}}=0$ implies $N \in \mathcal{F}$.
\end{enumerate}
The category $\mathcal{T}\ (\mathcal{F})$ is then called torsion (torsion-free) class of the torsion pair $(\mathcal{T},\mathcal{F})$.
According to \cite[VI, 1.4]{ass06}, torsion classes correspond to those full subcategories of $\modi A$ that are closed under images and extensions whereas torsion-free classes correspond to the full subcategories of $\modi A$ that are closed under submodules and extensions.

\begin{cor}\label{preinj}
The category $\EIP(n,r)$ is the torsion class $\mathcal{T}$ of a torsion pair $(\mathcal{T}, \mathcal{F})$ in $\modi B(n,r)$ with $\EKP(n,r) \subset \mathcal{F}$ that is closed under the Auslander-Reiten translate $\tau$ and which contains all preinjective modules.\\
Dually, $\EKP(n,r)$ is a torsion-free class $\mathcal{F}'$ of a torsion pair $(\mathcal{T'}, \mathcal{F'})$ in $\modi B(n,r)$  with $\EIP(n,r) \subset \mathcal{T}'$ that is closed under $\tau^{-1}$ and  contains all preprojective modules.\\
In particular, there are no non-trivial maps $\EIP(n,r) \rightarrow \EKP(n,r)$.
\end{cor}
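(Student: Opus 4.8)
The statement to prove is Corollary \ref{preinj}, which asserts that $\EIP(n,r)$ is a torsion class, $\EKP(n,r)$ a torsion-free class, with the stated inclusions, $\tau$-closure ($\tau^{-1}$-closure), containment of all preinjectives (preprojectives), and vanishing of maps $\EIP(n,r) \to \EKP(n,r)$. My strategy is to derive everything from the homological characterization in Theorem \ref{main}, using the standard machinery relating perpendicular categories of modules of projective dimension one to torsion pairs over a hereditary or quasi-hereditary-type algebra, together with the Auslander-Reiten formula.

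First I would establish that $\EIP(n,r)$ is a torsion class by invoking the criterion cited from \cite[VI, 1.4]{ass06}: a full subcategory is a torsion class iff it is closed under images and extensions. Both follow formally from Theorem \ref{main}(a), which writes $\EIP(n,r) = \{M \mid \Ext^1(X_\alpha^1, M) = 0 \; \forall \alpha \in k^r \backslash 0\}$. Since each $X_\alpha^1$ has projective dimension one (Proposition \ref{prop}(i)), the functor $\Ext^1(X_\alpha^1, -)$ is right exact, so $\Ext^1(X_\alpha^1, -)$-vanishing classes are closed under quotients and extensions; closure under extensions is immediate from the long exact sequence, and closure under images uses right-exactness of $\Ext^1$ together with the fact that $\pd X_\alpha^1 = 1$ forces $\Ext^2(X_\alpha^1, -) = 0$. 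This exhibits $(\mathcal{T}, \mathcal{F})$ with $\mathcal{T} = \EIP(n,r)$ and $\mathcal{F} = \mathcal{T}^\perp = \{N \mid \Hom(M,N) = 0 \; \forall M \in \EIP(n,r)\}$ as a genuine torsion pair. Dually, Theorem \ref{main}(b) presents $\EKP(n,r)$ as a $\Hom(X_\alpha^1,-)$-vanishing class, which is closed under submodules and extensions, hence a torsion-free class; this gives the torsion pair $(\mathcal{T}', \mathcal{F}')$ with $\mathcal{F}' = \EKP(n,r)$, and one can also invoke the duality $D$ (noted before the corollary to interchange $\EIP$ and $\EKP$) to deduce the second half from the first.

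Next I would verify the inclusions and the $\tau$-closure, which I expect to be the technical heart. For $\EKP(n,r) \subset \mathcal{F}$: given $M \in \EIP(n,r)$ and $N \in \EKP(n,r)$, I must show $\Hom(M,N) = 0$, which is exactly the final "no non-trivial maps" assertion, so these two claims coincide. The cleanest route is the Auslander-Reiten formula $\Ext^1(M,N) \cong D\overline{\Hom}(N, \tau M)$ (and its transpose), which converts $\Hom$- and $\Ext$-vanishing against the bricks $X_\alpha^1$ into statements about $\tau$. Concretely, to show $\EIP(n,r)$ is closed under $\tau$, I take $M \in \EIP(n,r)$ and need $\Ext^1(X_\alpha^1, \tau M) = 0$; by the AR-formula this is controlled by $\overline{\Hom}(M, X_\alpha^1)$ or by $\Hom(X_\alpha^1, \tau M)$, and I would exploit that each $X_\alpha^1$ is a brick (Proposition \ref{prop}(iii)) with $\pd = 1$ so that maps into and out of it are rigidly constrained. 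The containment of all preinjectives then follows because for $N$ preinjective there is $m$ with $\tau^{-m} N$ injective, and injectives $I(i)$ can be checked directly to lie in $\EIP(n,r)$ via Theorem \ref{main}(a) (the relevant $\Ext^1$ vanishes since $I(i)$ is injective); then $\tau$-closure propagates membership backward along the preinjective component.

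The main obstacle I anticipate is the $\tau$-closure argument: unlike the hereditary Kronecker case studied by Happel-Unger \cite{haun}, the algebra $B(n,r)$ is not hereditary for $n > 2$, so $\Ext^2$ need not vanish globally and one cannot simply transport perpendicularity along $\tau$ using a naive hereditary argument. The saving grace is that the distinguished modules $X_\alpha^1$ still have projective dimension exactly one (Proposition \ref{prop}(i)), so the relevant $\Ext^1$-functors remain right exact and the AR-formula applies to them even though the ambient algebra has higher global dimension. I would therefore isolate the argument to these $\pd \le 1$ test modules rather than arguing about the whole category, and use that $X_\alpha^1$ being a brick pins down $\End(X_\alpha^1) \cong k$, so that the factor groups $\overline{\Hom}$ appearing in the AR-formula are computed against a one-dimensional endomorphism algebra. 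Once $\tau$-closure is in place, the final vanishing $\Hom(\EIP, \EKP) = 0$ follows since $\EKP(n,r) \subseteq \mathcal{F} = \mathcal{T}^\perp$, and the dual statements transfer verbatim through $D$.
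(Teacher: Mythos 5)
Your overall architecture matches the paper's proof: torsion/torsion-free classes from the $\Ext^1(X_\alpha^1,-)$- and $\Hom(X_\alpha^1,-)$-vanishing descriptions of Theorem \ref{main}, the Auslander--Reiten formula for $\tau$-closure, injectives plus $\tau$-closure for the preinjectives, and the duality $D$ for the second half. But there are two genuine gaps. First, the heart of the $\tau$-closure argument is the vanishing $\Hom(M,X_\alpha^1)=0$ for all $M\in\EIP(n,r)$, and you never actually prove it: saying that $X_\alpha^1$ is a brick of projective dimension one so that ``maps into and out of it are rigidly constrained'' does not yield this vanishing. The paper's argument uses specific structural facts about the summands $X_\alpha^{i,1}$ from Proposition \ref{prop}: for $i\geq 1$ one has $(X_\alpha^{i,1})_0=0$, and since any nonzero module in $\EIP(n,r)$ has nonzero degree-zero component, $X_\alpha^{i,1}$ has no nonzero submodule in $\EIP(n,r)$, i.e.\ it is torsion-free; for $i=0$, the module $X_\alpha^{0,1}$ is generated by its one-dimensional component $(X_\alpha^{0,1})_0$, so any nonzero submodule lying in $\EIP(n,r)$ (hence generated in degree $0$) would be all of $X_\alpha^{0,1}$ --- impossible because $\alpha$ acts as zero on the generator, so $X_\alpha^{0,1}\notin\EIP(n,r)$. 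Without these support and generation properties the brick hypothesis alone gets you nowhere.

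Second, your treatment of $\EKP(n,r)\subset\mathcal{F}$ is circular: you first observe (correctly) that this inclusion is equivalent to $\Hom(\EIP(n,r),\EKP(n,r))=0$, and then at the end you deduce that vanishing ``since $\EKP(n,r)\subseteq\mathcal{F}$'' --- so neither statement is ever established. The paper closes this loop with an independent input: $\EKP(n,r)$ is closed under submodules (it is a $\Hom$-vanishing class), and $\EIP(n,r)\cap\EKP(n,r)=(0)$ (the remark following Proposition \ref{func}, which rests on $\EIP(kE_r)\cap\EKP(kE_r)=\add k$). Hence any submodule of an $\EKP$-module that lies in $\EIP$ is in the intersection and therefore zero, which is exactly the statement $\EKP(n,r)\subset\mathcal{F}$. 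You need to invoke this intersection fact (or some substitute) to break the circle.
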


\begin{proof}
Application of Theorem \ref{main} directly yields that $\EIP(n,r)$ is extension closed. Since $\pd(X_{\alpha}^j)=1$ and hence $\Ext^2(X_{\alpha}^j,-)=0$, the class is furthermore image closed. Thus $\EIP(n,r)$ is a torsion class in $\modi B(n,r)$.\\
The corresponding torsion-free objects in $\mathcal{F}=\left\{M \in \modi B(n,r)|\Hom(\mathcal{T},M)=0 \right\}$ are those that do not have any non trivial submodules in $\EIP(n,r)$. In particular, all $N \in \modi B(n,r)$ such that $N_0=0$ are torsion-free.\\
We now show that for $M \in  \EIP(n,r)$, we have $\tau(M) \in  \EIP(n,r)$. The Auslander-Reiten formula \cite[IV, 2.13]{ass06} yields an isomorphism
$$\Ext^1(X_{\alpha}^1,\tau M) \cong D\underline{\Hom}(M,X_{\alpha}^1),$$
where $$\Hom(M,X_{\alpha}^1) \cong \bigoplus_{i=0}^{n-1} \Hom(M, X_{\alpha}^{i,1}).$$
For $i \geq 1$, we have $(X_{\alpha}^{i,1})_0=0$ (Prop. \ref{prop}, (ii)) and thus $X_{\alpha}^{i,1} \in \mathcal{F}$. This yields the isomorphism
$\Hom(M,X_{\alpha}^1) \cong  \Hom(M, X_{\alpha}^{0,1} ).$
Since $[0] \subset [0,n-1]=\supp X_{\alpha}^{0,1}$ (Prop. \ref{prop}, (ii)) and by definition $\im \alpha_{X_{\alpha}^{0,1}}=0$, we in particular obtain $X_{\alpha}^{0,1} \notin \EIP(n,r)$.\\
Since $X_{\alpha}^{0,1}=B(n,r) (X_{\alpha}^{0,1})_0$  as well as $\dim_k (X_{\alpha}^{0,1})_0=1$, this already yields $X_{\alpha}^{0,1} \in \mathcal{F}$ and hence $\Hom(M, X_{\alpha}^{0,1})=0$ which implies $\tau(M) \in \EIP(n,r)$.\\
Moreover, Theorem \ref{main} directly yields that $\EIP(n,r)$ contains all injective objects in \linebreak $\modi B(n,r)$ and hence also their $\tau^m$-shifts for all $m \geq0$, i.e. all preinjectives. The dual statement follows using $D$. Hence $\EKP(n,r)$ is closed under taking submodules and $\EIP(n,r) \cap \EKP(n,r) =(0)$ implies $\EKP(n,r) \subset \mathcal{F}$.
\end{proof}
Note furthermore that the inclusions $\EKP(n,r) \subset \mathcal{F}$ and $\EIP(n,r) \subset \mathcal{T}'$ are proper. We have $X^{n-2,1}_{\alpha} \in \mathcal{F} \backslash \EKP(n,r)$ for example.

Corollary \ref{preinj} implies that a mesh in the Auslander-Reiten quiver of $\modi B(n,r)$

\[\begin{xy} 
\xymatrix@R=0.7em@C=0.7em{
& [E_1] \ar[rd] \\
[\tau(M)] \ar[ru] \ar[rd]& \vdots & [M]\\
& [E_t] \ar[ru]
}
\end{xy}\]

\vspace{0,5cm}
with $M$ in $\EIP(n,r)$, is completely contained in $\EIP(n,r)$. We thus obtain
\begin{cor}\label{pred}
Let $M \in \EIP(n,r)$ be indecomposable. Then  $(\rightarrow M) \subseteq \EIP(n,r)$. Dually, for $M \in \EKP(n,r)$, we have $(M \rightarrow) \subseteq \EKP(n,r)$.
\end{cor}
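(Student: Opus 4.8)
The plan is to deduce the statement from the mesh observation preceding the corollary by an induction on the length of a directed path in the Auslander--Reiten quiver. Recall from Corollary \ref{preinj} that $\EIP(n,r)$ is the torsion class $\mathcal{T}$ of a torsion pair $(\mathcal{T},\mathcal{F})$ and is closed under $\tau$. Being a torsion class, it is closed under quotients, hence under direct summands, and under extensions. These three closure properties are exactly what is needed to propagate membership along a mesh: if $N' \in \EIP(n,r)$ is non-projective, the almost split sequence ending at $N'$,
$$0 \to \tau N' \to \bigoplus_{i=1}^t E_i^{n_i} \to N' \to 0,$$
has $\tau N' \in \EIP(n,r)$ by closure under $\tau$ and $N' \in \EIP(n,r)$ by assumption, so the middle term lies in $\EIP(n,r)$ by extension-closure, and therefore each indecomposable summand $E_i$ lies in $\EIP(n,r)$. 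Since the $E_i$ are precisely the immediate predecessors of $N'$ in $\Gamma(B(n,r))$, the immediate predecessors of any non-projective module in $\EIP(n,r)$ again belong to $\EIP(n,r)$.

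To iterate this I must guarantee that every module in $\EIP(n,r)$ to which the mesh argument is applied is non-projective, i.e. that $\EIP(n,r)$ contains no indecomposable projective. This is the one extra input, and it follows formally from Corollary \ref{preinj}: the preprojective modules are contained in $\EKP(n,r) \subseteq \mathcal{F}$, while $\EIP(n,r) = \mathcal{T}$; since $\mathcal{T} \cap \mathcal{F} = 0$ for any torsion pair, no indecomposable in $\EIP(n,r)$ is preprojective, and in particular none is projective. I expect this to be the main (though minor) obstacle, as it is precisely what ensures the almost split sequence is available at every step of the induction.

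Now let $N \in (\rightarrow M)$, so there is a directed path $N = N_0 \to N_1 \to \cdots \to N_s = M$ of irreducible maps. I argue by descending induction on $t$ that each $N_t$ lies in $\EIP(n,r)$. The base case $t=s$ is the hypothesis $M \in \EIP(n,r)$. Assuming $N_{t+1} \in \EIP(n,r)$, the previous paragraph shows $N_{t+1}$ is non-projective, so the mesh argument applies to $N_{t+1}$; as $N_t$ is an immediate predecessor of $N_{t+1}$, this yields $N_t \in \EIP(n,r)$. Hence $N = N_0 \in \EIP(n,r)$, which gives $(\rightarrow M) \subseteq \EIP(n,r)$.

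Finally, the dual assertion follows by applying the duality $D \colon \modi B(n,r) \to \modi B(n,r)$ of $\S 2.1$: it interchanges $\EIP(n,r)$ and $\EKP(n,r)$, intertwines $\tau$ with $\tau^{-1}$, and reverses the direction of the arrows of $\Gamma(B(n,r))$, so it carries predecessors to successors and transports the statement just proved for $\EIP(n,r)$ to the corresponding statement $(M \rightarrow) \subseteq \EKP(n,r)$ for $M \in \EKP(n,r)$.
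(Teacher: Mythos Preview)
Your proof is correct and follows the same route as the paper: the mesh observation (if $M\in\EIP(n,r)$ is non-projective, then $\tau M$, the middle term, and hence each $E_i$ lie in $\EIP(n,r)$ by $\tau$-closure, extension-closure and summand-closure of the torsion class) is iterated along a directed path. Your write-up is in fact more careful than the paper's, which leaves the non-projectivity of the intermediate vertices implicit; your observation that $\EIP(n,r)\cap\EKP(n,r)=(0)$ together with $\mathcal{P}\subseteq\EKP(n,r)$ rules out projectives in $\EIP(n,r)$ is exactly the right way to close that gap.
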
 

We can make more precise statements for $\mathbb{Z}A_{\infty}$-components of $\Gamma(B(n,r))$. These components can be visualized as follows:
\[ \begin{xy}
\xymatrix@R=0.7em@C=0.7em{
\vdots  && \vdots && \vdots &&\vdots  && \vdots && \vdots &&\vdots\\
\cdots &   \bullet   \ar[rd]  &&  \bullet   \ar[rd]   && \bullet  \ar[rd]   &&  \bullet  \ar[rd] &&  \bullet  \ar[rd]  &&  \bullet  \ar[rd] && \bullet  \ar[rd]  & \cdots \\
 \bullet \ar[rd] \ar[ru] && \bullet \ar[rd] \ar[ru] \ar@{-->}[ll] && \bullet \ar[rd] \ar[ru] \ar@{-->}[ll] && \bullet \ar[rd] \ar[ru] \ar@{-->}[ll] && \bullet \ar[rd] \ar[ru] \ar@{-->}[ll] && \bullet \ar[rd] \ar[ru] \ar@{-->}[ll] && \bullet \ar[rd] \ar[ru]  \ar@{-->}[ll] &&\bullet  \ar@{-->}[ll] \\
  \cdots &  \bullet \ar[ru]  \ar[rd]  &&  \bullet \ar[ru]  \ar[rd] \ar@{-->}[ll]  &&  \bullet \ar[ru]  \ar[rd] \ar@{-->}[ll]  &&  \bullet \ar[ru]  \ar[rd] \ar@{-->}[ll] &&  \bullet \ar[ru]  \ar[rd] \ar@{-->}[ll]  &&  \bullet \ar[ru]  \ar[rd] \ar@{-->}[ll]   && \bullet   \ar[rd] \ar@{-->}[ll] \ar[ru] & \cdots \\
\bullet \ar[ru]  \ar[rd] &&   \bullet \ar[ru]  \ar[rd] \ar@{-->}[ll] &&  \bullet  \ar[ru]  \ar[rd] \ar@{-->}[ll]  &&  \bullet  \ar[ru]  \ar[rd] \ar@{-->}[ll] &&  \bullet  \ar[ru]  \ar[rd] \ar@{-->}[ll] &&  \bullet  \ar[ru]  \ar[rd] \ar@{-->}[ll]&&  \bullet  \ar[rd] \ar@{-->}[ll] \ar[ru] && \bullet  \ar@{-->}[ll] \\
{ \cdots} & \bullet\ar[ru]  \ar[rd]  &&   \bullet \ar[ru]  \ar[rd] \ar@{-->}[ll]  &&  \bullet \ar[ru]  \ar[rd] \ar@{-->}[ll]  &&  \bullet \ar[ru]  \ar[rd] \ar@{-->}[ll] &&  \bullet \ar[ru]  \ar[rd] \ar@{-->}[ll]   && \bullet   \ar[ru]  \ar[rd] \ar@{-->}[ll] && \bullet  \ar[ru]  \ar[rd] \ar@{-->}[ll] & { \cdots} \\
\bullet \ar[rd]  \ar[ru] && \bullet \ar[rd]  \ar[ru] \ar@{-->}[ll] &&  \bullet \ar[rd]  \ar[ru] \ar@{-->}[ll] && \bullet \ar[rd]  \ar[ru] \ar@{-->}[ll] && \bullet \ar[rd]  \ar[ru] \ar@{-->}[ll] &&  \bullet \ar[rd]  \ar[ru] \ar@{-->}[ll] && \bullet \ar[ru] \ar[rd] \ar@{-->}[ll] && \bullet  \ar@{-->}[ll] \\
{ \cdots} & \bullet \ar[ru]  \ar[rd]  &&  \bullet \ar[ru]  \ar[rd] \ar@{-->}[ll]  &&   \bullet \ar[ru]  \ar[rd] \ar@{-->}[ll]  &&  \bullet \ar[ru]  \ar[rd] \ar@{-->}[ll] &&  \bullet \ar[ru]  \ar[rd] \ar@{-->}[ll]   && \bullet  \ar[ru]  \ar[rd] \ar@{-->}[ll] && \bullet \ar[rd] \ar[ru]  \ar@{-->}[ll] & { \cdots} \\
\bullet  \ar[ru] &&  \bullet  \ar[ru] \ar@{-->}[ll] &&  \bullet  \ar[ru] \ar@{-->}[ll] &&   \bullet \ar[ru] \ar@{-->}[ll] &&  \bullet \ar[ru] \ar@{-->}[ll]&&  \bullet \ar[ru] \ar@{-->}[ll] && \bullet \ar[ru] \ar@{-->}[ll]&& \bullet \ar@{-->}[ll]}
\end{xy} \]

Modules in the bottom row of such components are called {\it quasi-simple}. Ringel \cite{ri78} has shown that for each module $M$ in a regular $\mathbb{Z}A_{\infty}$-component $\mathcal{C}$, there exist uniquely determined quasi-simple modules $X$ and $Y$ $\in \mathcal{C}$ and uniquely determined chains of irreducible monomorphisms $X=X_1  \rightarrow \cdots  \rightarrow X_{s-1}\rightarrow X_s= M$ and epimorphisms $M=Y_s \rightarrow Y_{s-1} \rightarrow  \cdots \rightarrow Y_1=Y$ where $s$ is the so called {\it quasi-length} of $M$ and $X$ ($Y$) is referred to as the {\it quasi-socle} ({\it quasi-top}) of $M$. Moreover, $M$ is uniquely determined by its quasi-length and quasi-socle (quasi-top) whence we write $M=X(s)$ ($M=[s]Y$).\\
\begin{prop}\label{ZA}
Let $\mathcal{C}$ be a regular $\mathbb{Z}A_{\infty}$-component of $\Gamma(B(n,r))$. If $\EIP(n,r) \cap \mathcal{C} \neq \emptyset$, then either $\mathcal{C} \subseteq \EIP(n,r)$ or there exists a quasi-simple module $W_{\mathcal{C}}$ such that \linebreak
 $(\rightarrow W_{\mathcal{C}})=\mathcal{C} \cap \EIP(n,r).$
Dually, if $\EKP(n,r) \cap \mathcal{C} \neq \emptyset$, then either $\mathcal{C} \subseteq \EKP(n,r)$ or there exists a quasi-simple module $M_{\mathcal{C}}$ such that
$(M_{\mathcal{C}} \rightarrow)=\mathcal{C} \cap \EKP(n,r).$
\end{prop}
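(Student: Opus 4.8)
The plan is to read off the three closure properties of $\mathcal{E}:=\mathcal{C}\cap\EIP(n,r)$ that the preceding results supply and then combine them with the combinatorics of a regular $\mathbb{Z}A_{\infty}$-component. By Corollary \ref{preinj}, $\EIP(n,r)$ is a torsion class, hence closed under quotients, and it is closed under $\tau$; by Corollary \ref{pred} it is closed under predecessors. Assuming $\mathcal{E}\neq\emptyset$, the class $\mathcal{E}$ is therefore non-empty, closed under predecessors inside $\mathcal{C}$, stable under $\tau$, and stable under passing to quotients that lie in $\mathcal{C}$ (predecessors and quotients of indecomposables connected to $M$ by irreducible maps remain in the component $\mathcal{C}$).

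First I would translate these properties into statements about Ringel's data. For $M\in\mathcal{C}$ with quasi-socle $X$ and quasi-top $Y$, the chain of irreducible monomorphisms $X=X_1\to\cdots\to X_s=M$ exhibits $X$ as a predecessor of $M$, while the chain of irreducible epimorphisms $M=Y_s\to\cdots\to Y_1=Y$ exhibits $Y$ as a quotient of $M$. Hence, if $M\in\mathcal{E}$, then $X\in\mathcal{E}$ by predecessor-closure and $Y\in\mathcal{E}$ by quotient-closure. I want to stress that it is essential to use the torsion (image-closed) property for the quasi-top, since $Y$ is a successor and not a predecessor of $M$, so predecessor-closure alone cannot reach it. Next I would analyse the quasi-simple modules in $\mathcal{E}$: it contains the quasi-socle of any of its elements, hence at least one quasi-simple. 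In a $\mathbb{Z}A_{\infty}$-component the quasi-simples form a single $\tau$-orbit $\{C_i\mid i\in\mathbb{Z}\}$ with $\tau C_i\cong C_{i-1}$ for a suitable indexing, so $\tau$-stability forces $S:=\{i\mid C_i\in\mathcal{E}\}$ to be a non-empty down-set of $\mathbb{Z}$: either $S=\mathbb{Z}$, or $S=\{i\mid i\le i_1\}$ for a unique $i_1$.

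The case distinction then rests on the combinatorial fact that, for a quasi-simple $W=C_{i_1}$ of a $\mathbb{Z}A_{\infty}$-component, a module lies in $(\rightarrow W)$ if and only if both its quasi-socle and its quasi-top lie in $(\rightarrow W)$, equivalently have index at most $i_1$. If $S=\mathbb{Z}$, then for every $M\in\mathcal{C}$ the quasi-top $Y$ is quasi-simple, hence $Y\in\mathcal{E}$, and $M$ is a predecessor of $Y$; predecessor-closure gives $M\in\mathcal{E}$, so $\mathcal{C}\subseteq\EIP(n,r)$. Otherwise set $W_{\mathcal{C}}:=C_{i_1}$: predecessor-closure yields $(\rightarrow W_{\mathcal{C}})\subseteq\mathcal{E}$, while conversely, for $M\in\mathcal{E}$, both the quasi-socle and the quasi-top of $M$ lie in $\mathcal{E}$, hence in $\{C_i\mid i\le i_1\}$, so the combinatorial criterion places $M$ in $(\rightarrow W_{\mathcal{C}})$; thus $\mathcal{E}=(\rightarrow W_{\mathcal{C}})$. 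The dual statement for $\EKP(n,r)$ I would obtain by applying the duality $D$, which sends $\mathcal{C}$ to a regular $\mathbb{Z}A_{\infty}$-component, interchanges $\EIP(n,r)$ with $\EKP(n,r)$, predecessors with successors, quotients with submodules, and quasi-socle with quasi-top.

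The step I expect to be the main obstacle is the careful bookkeeping of the $\mathbb{Z}A_{\infty}$-combinatorics, namely verifying the criterion that $M\in(\rightarrow W)$ is governed precisely by the indices of its quasi-socle and quasi-top, and matching each of these two to the correct closure property: predecessor-closure handles the quasi-socle, but the quasi-top is accessible from $M$ only as a quotient, so the torsion property of $\EIP(n,r)$ is indispensable. Once this dictionary between the lattice coordinates of the component and the closure operations is in place, the separation into the two cases $S=\mathbb{Z}$ and $S=\{i\le i_1\}$ is immediate.
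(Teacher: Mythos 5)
Your argument is correct and follows essentially the same route as the paper: use image-closure of the torsion class $\EIP(n,r)$ to pass from a module to its quasi-top, predecessor-closure (Corollary \ref{pred}) together with $\tau$-stability to see that the quasi-simples in $\EIP(n,r)$ form a down-set in the $\tau$-orbit, and then read off the two cases. Your write-up merely makes explicit the $\mathbb{Z}A_{\infty}$-bookkeeping that the paper leaves implicit.
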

\begin{proof}
Since in every regular $\mathbb{Z}A_{\infty}$-component the irreducible maps from top to bottom are surjective, $\EIP(n,r) \cap \mathcal{C} \neq \emptyset$ yields the existence of a quasi-simple module $W$ in $\mathcal{C}$ that belongs to $\EIP(n,r)$. If all quasi-simple modules belong to $\EIP(n,r)$, Corollary \ref{pred} yields $\mathcal{C} \subset \EIP(n,r)$. In view of Corollary \ref{pred} and the fact that any two quasi-simple modules are successor, resp. predecessor of one another, we can choose $k$ maximal such that $W_{\mathcal{C}}:=\tau^{-k}(W) \in \EIP(n,r)$ and $(\rightarrow W_{\mathcal{C}})=\mathcal{C} \cap \EIP(n,r)$. Dual properties of modules in $\EKP(n,r)$ yield the assertion.
\end{proof}
Furthermore, we can extend Corollary \ref{preinj} with regard to when the translate of a module satisfies the equal images property.

\begin{prop}
Let $M=M_0 \oplus M_1 \cdots \oplus M_{n-1} \in \modi B(n,r)$ be indecomposable and generated by $M_0$. If $M_{n-1}=0$, then $\tau M \in \EIP(n,r)$.
\end{prop}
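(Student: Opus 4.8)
The plan is to run the argument from Corollary \ref{preinj} that shows $\EIP(n,r)$ is closed under $\tau$, but with the hypothesis ``$M\in\EIP(n,r)$'' replaced by ``$M$ is generated by $M_0$ and $M_{n-1}=0$''. First I would record that $M$ is not projective: every indecomposable projective $P(i)$ has $\supp P(i)=[i,n-1]$, so $(P(i))_{n-1}\neq0$, whereas $M_{n-1}=0$ and $M$ is indecomposable. Hence the Auslander-Reiten formula applies exactly as in Corollary \ref{preinj}, yielding for every $\alpha\in k^r\backslash 0$
$$\Ext^1(X_{\alpha}^1,\tau M)\cong D\underline{\Hom}(M,X_{\alpha}^1).$$
By Theorem \ref{main}(a), establishing $\tau M\in\EIP(n,r)$ thus reduces to proving $\Hom(M,X_{\alpha}^1)=0$ for all $\alpha$, since vanishing of the ordinary Hom forces vanishing of the stable Hom.

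I would then split $\Hom(M,X_{\alpha}^1)=\bigoplus_{i=0}^{n-2}\Hom(M,X_{\alpha}^{i,1})$ and dispose of the summands by degree. For $i\geq1$, Proposition \ref{prop}(ii) gives $(X_{\alpha}^{i,1})_0=0$; as every morphism in $\modi B(n,r)$ is homogeneous and $M=B(n,r)M_0$, any $f\colon M\to X_{\alpha}^{i,1}$ kills $M_0$ and hence vanishes on all of $M$. This leaves only the summand $\Hom(M,X_{\alpha}^{0,1})$.

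The heart of the proof is $\Hom(M,X_{\alpha}^{0,1})=0$, where the assumption $M_{n-1}=0$ enters. Arguing by contradiction, suppose $f\colon M\to X_{\alpha}^{0,1}$ is nonzero. Since $M=B(n,r)M_0$, we have $f(M)=B(n,r)f(M_0)$, so $f(M_0)\neq0$. By Proposition \ref{prop}(ii),(iii) the module $X_{\alpha}^{0,1}$ is standardly graded with $\dim_k(X_{\alpha}^{0,1})_0=1$, hence cyclic on its degree-$0$ part; thus $f(M_0)=(X_{\alpha}^{0,1})_0$ and $f(M)=B(n,r)(X_{\alpha}^{0,1})_0=X_{\alpha}^{0,1}$, i.e. $f$ is surjective. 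But $f$ is homogeneous and $M_{n-1}=0$, so $f(M)_{n-1}=f(M_{n-1})=0$, whereas $\supp X_{\alpha}^{0,1}=[0,n-1]$ gives $(X_{\alpha}^{0,1})_{n-1}\neq0$, contradicting surjectivity. Hence $f=0$.

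Combining both cases gives $\Hom(M,X_{\alpha}^1)=0$, so $\Ext^1(X_{\alpha}^1,\tau M)=0$ for all $\alpha$, and Theorem \ref{main}(a) yields $\tau M\in\EIP(n,r)$. I expect the degree-$0$ summand to be the only genuine obstacle: the $i\geq1$ summands vanish by a formal support-and-generation argument, whereas ruling out $\Hom(M,X_{\alpha}^{0,1})$ really needs the interplay between $X_{\alpha}^{0,1}$ being cyclic with one-dimensional top and the hypothesis $M_{n-1}=0$ to defeat surjectivity. A secondary point worth checking is that the invoked form of the Auslander-Reiten formula requires only that $M$ be non-projective, which the first step guarantees.
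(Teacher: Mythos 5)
Your proposal is correct and follows essentially the same route as the paper: reduce via the Auslander--Reiten formula and Theorem \ref{main}(a) to showing $\Hom(M,X_{\alpha}^1)=0$, use the generation of $M$ by $M_0$ to isolate the summand $\Hom(M,X_{\alpha}^{0,1})$, and derive a contradiction from the forced surjectivity of a nonzero map against $M_{n-1}=0$ and $(X_{\alpha}^{0,1})_{n-1}\neq 0$. The extra checks you include (non-projectivity of $M$, and that vanishing of $\Hom$ kills the stable $\Hom$) are harmless elaborations of steps the paper leaves implicit.
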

\begin{proof}
Applying Theorem \ref{main} again in combination with the Auslander-Reiten formula, it suffices to show that $\Hom(M,X_{\alpha}^1)=0$. Since the module $M$ is generated by $M_0$, we have $\Hom(M,X_{\alpha}^1) \cong \Hom(M,X_{\alpha}^{0,1})$.\\
Assume that there is a non-trivial morphism $\varphi: M \rightarrow X_{\alpha}^{0,1}$. Then there exists $m \in M_0$ such that $\varphi(m) \in (X_{\alpha}^{0,1})_0 \backslash 0$. Since $(X_{\alpha}^{0,1})_0$ is one-dimensional (Proposition \ref{prop}, (iii)) and $X_{\alpha}^{0,1}$ is generated by $(X_{\alpha}^{0,1})_0$ (Proposition \ref{prop}, (ii)), $\varphi$ is hence surjective. This contradicts the fact that $M_{n-1}=0$ and $(X_{\alpha}^{0,1})_{n-1} \neq 0$.
\end{proof}

The next result concerns the special role that $W$- and $M$- modules play as modules for generalized Beilinson algebras.\\
Recall that for all $m \in \mathbb{N}, d \leq n$, the $\mathbb{Z}$-graded module $M_{m,d}^{(r)}$ endowed with the grading from $\S \ 1$ satisfies $\supp(M_{m,d}^{(r)})=[m-d,m-1]$. Hence we have $M_{m,d}^{(r)}[n-m] \in \mathcal{C}_{[0,n-1]}$ such that $M_{m,d}^{(r)}[n-m]$ is an object in $\EKP(n,r)$. Likewise, the canonical $\mathbb{Z}$-grading on $W$-modules is such that $\supp(W_{m,d}^{(r)})=[-m+1,-m+d]$ and hence $W_{m,d}^{(r)}[m-1] \in \mathcal{C}_{[0,n-1]}$ is an object in $\EIP(n,r)$. For our duality $D$ on $\modi B(n,r)$, we have 
$$DM_{m,d}^{(r)}[n-m] \cong W_{m,d}^{(r)}[m-1].$$ 
Note furthermore that for $1 \leq d \leq n$, we have $M_{d,d}^{(r)}[n-d] \cong P(n-d)$ and $W_{d,d}^{(r)}[d-1] \cong I(d-1)$. Since $M_{m,d}^{(r)}$ is a brick in $\modi_{\mathbb{Z}} kE_r$ by Corollary \ref{brick}, $M_{m,d}^{(r)}[n-m]$ is a brick in $\modi B(n,r)$.\\
In the remainder of this section, we are concerned with $B(n,r)$-modules and hence shorten notation and write $M_{m,d}^{(r)}$ for the $B(n,r)$-module $M_{m,d}^{(r)}[n-m]$ and likewise $W_{m,d}^{(r)}$ for the $B(n,r)$-module $W_{m,d}^{(r)}[m-1]$.\\
The following theorem does not hold in case $r=2$. Since modules of the form $M_{m,2}^{(2)}$ are preprojective, we have $\tau(M_{m,2}^{(2)}) \in \EKP(2,2) \backslash 0$ for $m>2$.

\begin{theorem}\label{wmod}
Let $r \geq 3$ and let $n \leq p$, $m > n$. Then $\tau (M_{m,n}^{(r)}) \in \EIP(n,r)$ and dually $\tau^{-1}(W_{m,n}^{(r)}) \in \EKP(n,r)$.
\end{theorem}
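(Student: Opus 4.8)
The plan is to prove the first assertion, $\tau(M_{m,n}^{(r)}) \in \EIP(n,r)$; the dual statement about $\tau^{-1}(W_{m,n}^{(r)})$ then follows formally by applying the duality $D$, since $W_{m,n}^{(r)} \cong D M_{m,n}^{(r)}$, $D\,\EIP(n,r) = \EKP(n,r)$ and $D\tau \cong \tau^{-1}D$. By Theorem \ref{main}(a), membership in $\EIP(n,r)$ amounts to $\Ext^1(X_{\alpha}^1, \tau M_{m,n}^{(r)}) = 0$ for all $\alpha \in k^r \backslash 0$. As in the proof of Corollary \ref{preinj}, the Auslander-Reiten formula gives $\Ext^1(X_{\alpha}^1, \tau M_{m,n}^{(r)}) \cong D\underline{\Hom}(M_{m,n}^{(r)}, X_{\alpha}^1)$, so it is enough to show the stronger statement $\Hom(M_{m,n}^{(r)}, X_{\alpha}^1) = 0$ for every $\alpha \neq 0$. (Here $\tau$ is defined: for $m > n$ the module $M_{m,n}^{(r)}$ is indecomposable with top $M_0$ of dimension $\binom{r+m-n-1}{m-n} > 1$, hence non-projective.) Since $M_{m,n}^{(r)}$ is generated in degree $0$ and $(X_{\alpha}^{i,1})_0 = 0$ for $i \geq 1$ by Proposition \ref{prop}(ii), every such map vanishes on all summands with $i \geq 1$, which reduces the task to proving $\Hom(M_{m,n}^{(r)}, X_{\alpha}^{0,1}) = 0$.

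Next I would invoke $\GL_r(k)$-stability. By Proposition \ref{stable} the module $M_{m,n}^{(r)}$ is $\GL_r(k)$-stable, and the modules $X_{\alpha}^{0,1}$ are permuted up to isomorphism by twisting while $\GL_r(k)$ acts transitively on $k^r \backslash 0$; hence all the spaces $\Hom(M_{m,n}^{(r)}, X_{\alpha}^{0,1})$ are isomorphic and it suffices to treat $\alpha = (1,0,\ldots,0)$, for which $X_{\alpha}^{0,1} \cong R/(x_1, I^n)$ is annihilated by $x_1$. Now suppose $\varphi \colon M_{m,n}^{(r)} \to X_{\alpha}^{0,1}$ is nonzero. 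As $X_{\alpha}^{0,1}$ is generated by its one-dimensional degree-$0$ component (Proposition \ref{prop}(ii),(iii)) and $\varphi$ preserves the grading, $\varphi$ must be surjective: if $\varphi(M_0) = 0$ then $\varphi = 0$, so $\varphi(M_0) = (X_{\alpha}^{0,1})_0$, and the image is a submodule containing the generator, hence all of $X_{\alpha}^{0,1}$. Since $x_1$ annihilates $X_{\alpha}^{0,1}$ we have $x_1 M_{m,n}^{(r)} \subseteq \ker \varphi$, so $\varphi$ factors as a surjection $\bar\varphi \colon M_{m,n}^{(r)}/x_1 M_{m,n}^{(r)} \twoheadrightarrow X_{\alpha}^{0,1}$ of modules over $B(n,r)/(x_1) \cong B(n,r-1)$.

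The decisive step is to identify both sides over $B(n,r-1)$. Reading off the canonical monomial basis, $M_{m,n}^{(r)}/x_1 M_{m,n}^{(r)} \cong M_{m,n}^{(r-1)}$ (the monomials in $x_2,\ldots,x_r$ of degrees $m-n,\ldots,m-1$), whereas $X_{\alpha}^{0,1} \cong S/I_S^n \cong \tilde P(0)$ with $S = k[x_2,\ldots,x_r]$, i.e.\ the projective cover of the simple at vertex $0$ for $B(n,r-1)$, which is exactly Proposition \ref{prop}(v) after relabelling. Thus $\bar\varphi$ is an epimorphism onto a projective $B(n,r-1)$-module and therefore splits, realizing $\tilde P(0)$ as a direct summand of $M_{m,n}^{(r-1)}$. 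But $M_{m,n}^{(r-1)}$ is indecomposable by Corollary \ref{brick}(i) (note $r-1 \geq 2$ and $1 < n \leq p$), and it is not isomorphic to $\tilde P(0)$, because its degree-$0$ component has dimension $\binom{r+m-n-2}{m-n} > 1$ for $m > n$ and $r-1 \geq 2$, while $\dim_k (\tilde P(0))_0 = 1$. This contradiction forces $\varphi = 0$, completing the proof.

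The hard part, and the point where the hypothesis $r \geq 3$ is indispensable, is this final identification: one must recognize that reducing modulo $x_1$ turns $M_{m,n}^{(r)}$ into the $M$-module $M_{m,n}^{(r-1)}$ for the smaller Beilinson algebra, and that this module is indecomposable yet strictly larger than the projective $\tilde P(0)$. For $r = 2$ the argument breaks down precisely here, since over $S = k[x_2]$ all homogeneous components are one-dimensional, so $M_{m,n}^{(1)} \cong \tilde P(0)$ and a nonzero map does exist; this matches the remark preceding the theorem that $\tau(M_{m,2}^{(2)}) \in \EKP(2,2)\backslash 0$. Everything else is the routine bookkeeping of the three reductions (Auslander-Reiten formula, generation in degree $0$, and $\GL_r(k)$-stability), each already carried out in similar form in Corollary \ref{preinj}.
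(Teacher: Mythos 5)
Your argument is correct and follows essentially the same route as the paper's: reduce via the Auslander--Reiten formula and generation in degree $0$ to $\Hom(M_{m,n}^{(r)},X_{\alpha}^{0,1})=0$, use $\GL_r(k)$-stability to fix $\alpha$, and derive a contradiction by splitting the resulting epimorphism onto the projective $B(n,r-1)$-module $\tilde{P}(0)$ off the reduction of $M_{m,n}^{(r)}$ modulo one variable. The only imprecision (shared with the paper, and harmless) is that $M_{m,n}^{(r)}/x_1M_{m,n}^{(r)}$ is really $M_{m,n}^{(r-1)}$ together with some simple summands $S(0)$ coming from the degree-$(m-n)$ monomials divisible by $X_1$, which does not affect the Krull--Schmidt contradiction.
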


\begin{proof}
We want to apply Theorem \ref{main} again in combination with the Auslander-Reiten formula and thus show that for all $\alpha \in k^r \backslash 0$, there are only trivial maps $M_{m,n}^{(r)} \rightarrow X_{\alpha}^1$. Since $M_{m,n}^{(r)}$ is generated by $(M_{m,n}^{(r)})_0$, we have $\Hom(M_{m,n}^{(r)},X_{\alpha}^1) \cong \Hom(M_{m,n}^{(r)},X_{\alpha}^{0,1})$ and a non-trivial map $\varphi: M_{m,n}^{(r)} \rightarrow X_{\alpha}^{0,1}$ is necessarily surjective (\ref{prop}, (ii), (iii)). By Proposition \ref{stable}, $M_{m,n}^{(r)}$ is $\GL_r(k)$-stable. Furthermore, for all $\alpha$, $\beta \in k^r \backslash 0$, there exists $g \in \GL_r(k)$ such that $(X_{\alpha}^{0,1})^{(g)} \cong X_{\beta}^{0,1}$. Since $\Hom(M_{m,n}^{(r)}, X_{\alpha}^{0,1}) \cong \Hom(M_{m,n}^{(r)}, (X_{\alpha}^{0,1})^{(g)})$ we may hence assume that $\alpha=(0,0,\ldots,1)$.\\
Now we have $\gamma^{(i)}_r \in \ann_{B(n,r)}X_{\alpha}^{0,1}$ and thus $\gamma^{(i)}_r  M_{m,n}^{(r)} \subseteq \ker \varphi$ for all $0 \leq i \leq n-2$. Note that $N:=\sum_{i=0}^{n-2}\gamma^{(i)}_r  M_{m,n}^{(r)}$ is a submodule of $M$ such that $\gamma_r^{(i)}$ acts trivially on $\tilde{M}:=M_{m,n}^{(r)}/N$. Observe that for the embedding $\iota$ from Proposition \ref{prop}, we have $\iota^*(\tilde{M}) \cong M_{m,n}^{(r-1)}$. Moreover, Proposition \ref{prop}, (v), yields $\iota^*(X_{\alpha}^{0,1}) \cong  \tilde{P}(0)$ for the projective indecomposable of $B(n,r-1)$-module corresponding to the vertex $0$. \\
Thus there results a split epimorphism
$M_{m,n}^{(r-1)} \rightarrow \tilde{P}(0)$  of $B(n,r-1)$-modules
which is a contradiction since by Theorem \ref{brick} $M_{m,n}^{(r-1)}$ is indecomposable and furthermore non-projective in $\modi B(n,r-1)$ since $m>n$ and $r>2$. Hence we have $\tau (M_{m,n}^{(r)}) \in \EIP(n,r)$. Our duality $D$ on $\modi B(n,r)$ now yields the assertion.
\end{proof}

\begin{lemma}\label{cjt}
Let $0 \rightarrow A \rightarrow B \rightarrow C \rightarrow 0$ be an exact sequence in $\modi B(n,r)$. If $A \in \EIP(n,r)$, then $B \in \CR^j(n,r)$ if and only if $C \in \CR^j(n,r)$.
\end{lemma}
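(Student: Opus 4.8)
The plan is to push everything through the homological descriptions of Theorem \ref{main}, converting the constant $j$-rank condition into a comparison of $\Ext^1$-dimensions. By Theorem \ref{main}(c), the category $\CR^j(n,r)$ consists precisely of those modules $M$ for which $\dim_k \Ext^1(X_\alpha^j, M)$ is independent of $\alpha \in k^r \backslash 0$. Hence it suffices to establish, under the hypothesis $A \in \EIP(n,r)$, that
\[
\dim_k \Ext^1(X_\alpha^j, B) = \dim_k \Ext^1(X_\alpha^j, C) \qquad \text{for every } \alpha \in k^r \backslash 0;
\]
the equivalence $B \in \CR^j(n,r) \Leftrightarrow C \in \CR^j(n,r)$ then follows immediately, since one dimension function is constant in $\alpha$ exactly when the other is.

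The key preliminary step is to upgrade the defining vanishing of $\EIP(n,r)$ from $j=1$ to all $j$. If $A \in \EIP(n,r)$, then $\im(\alpha_A) = \bigoplus_{i=1}^{n-1} A_i$, which, comparing graded components, means that each transition map $\alpha_A|_{A_i}\colon A_i \to A_{i+1}$ is surjective for $0 \leq i \leq n-2$. Consequently $(\alpha_A)^j|_{A_i}\colon A_i \to A_{i+j}$, being a composite of surjections, is itself surjective for all admissible $i$ and $j$. By the commutative square appearing in the proof of Theorem \ref{main}, surjectivity of $(\alpha_A)^j|_{A_i}$ is equivalent to $\Ext^1(X_\alpha^{i,j}, A) = 0$; summing over $i$ and using $X_\alpha^j = \bigoplus_{i=0}^{n-j-1} X_\alpha^{i,j}$ yields $\Ext^1(X_\alpha^j, A) = 0$ for every $j \geq 1$ and every $\alpha$.

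With this in hand I would apply the functor $\Hom(X_\alpha^j, -)$ to the short exact sequence $0 \to A \to B \to C \to 0$. Since $\pd(X_\alpha^j) = 1$ by Proposition \ref{prop}(i), we have $\Ext^2(X_\alpha^j, -) = 0$, so the long exact sequence terminates and its tail reads
\[
\Ext^1(X_\alpha^j, A) \to \Ext^1(X_\alpha^j, B) \to \Ext^1(X_\alpha^j, C) \to 0.
\]
As $\Ext^1(X_\alpha^j, A) = 0$ by the previous paragraph, the map $\Ext^1(X_\alpha^j, B) \to \Ext^1(X_\alpha^j, C)$ is an isomorphism, giving $\dim_k \Ext^1(X_\alpha^j, B) = \dim_k \Ext^1(X_\alpha^j, C)$ for every $\alpha$, which is exactly what was required.

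The only genuinely substantive point is the first step: the hypothesis $A \in \EIP(n,r)$ is phrased solely in terms of the degree-one operator, and one must see that it propagates to every power $j$. This is precisely the observation that surjectivity of each graded transition map forces surjectivity of all of its iterates, and I expect it to be the crux of the argument; the remainder is a formal consequence of the bound $\pd(X_\alpha^j) = 1$ together with the definition of $\CR^j(n,r)$ in Theorem \ref{main}(c).
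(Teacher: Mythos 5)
Your proof is correct and follows essentially the same route as the paper: apply $\Hom(X_\alpha^j,-)$ to the short exact sequence, use $\pd(X_\alpha^j)=1$ to truncate the long exact sequence, and kill the term $\Ext^1(X_\alpha^j,A)$ using $A\in\EIP(n,r)$. The only difference is that you spell out explicitly why the equal images property forces $\Ext^1(X_\alpha^j,A)=0$ for all $j$ (surjectivity of the graded transition maps propagates to their composites), a step the paper leaves implicit; this is a correct and welcome elaboration.
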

\begin{proof}
Since $\Ext^2(X_{\alpha}^j,-)=0$, we get an exact sequence
$$ \Ext^1(X_{\alpha}^j,A) \rightarrow \Ext^1(X_{\alpha}^j,B) \rightarrow \Ext^1(X_{\alpha}^j,C)\rightarrow 0,$$
where $\Ext^1(X_{\alpha}^j,A)=0$ since $A \in \EIP(n,r)$. Thus the dimension of the rightmost term does not depend on $\alpha$ iff the dimension of the middle term does not.
\end{proof}
 
We close this section on Beilinson algebras with the following statement concerning Auslander-Reiten sequences. In case $n=2$, this is a direct consequence of Theorem $\ref{shape}$ below.
\begin{prop}\label{indec}
Let $0 \rightarrow A \rightarrow B \rightarrow C \rightarrow 0$ be an Auslander-Reiten sequence in $\modi B(n,r)$ such that $A$ is in $\EIP(n,r)$  and $C$ is in $\EKP(n,r)$. Then $B$ is an indecomposable module in $\CJT(n,r) \backslash ( \EKP(n,r) \cup \EIP(n,r))$.
\end{prop}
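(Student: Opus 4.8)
The plan is to dispatch the three membership claims ($B\in\CJT(n,r)$, $B\notin\EIP(n,r)$, $B\notin\EKP(n,r)$) using closure properties already established, and then to prove indecomposability by exploiting the fact, from Corollary \ref{preinj}, that $\EIP(n,r)$ is a torsion class whose associated torsion-free class contains $\EKP(n,r)$.

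For the constant Jordan type assertion I would apply Lemma \ref{cjt} to the given sequence: since $A\in\EIP(n,r)$ and $C\in\EKP(n,r)\subseteq\CR^j(n,r)$ for every $j$, the Lemma gives $B\in\CR^j(n,r)$ for every $j$, hence $B\in\CJT(n,r)=\bigcap_{j}\CR^j(n,r)$. For the two exclusions I would argue by contradiction, using that $A,C\neq 0$ because they are the end terms of an Auslander--Reiten sequence and hence indecomposable. If $B\in\EIP(n,r)$, then since $\EIP(n,r)$ is image-closed its quotient $C=B/A$ would lie in $\EIP(n,r)\cap\EKP(n,r)=(0)$, forcing $C=0$, a contradiction. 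Dually, if $B\in\EKP(n,r)$, then since $\EKP(n,r)$ is closed under submodules the submodule $A$ would lie in $\EIP(n,r)\cap\EKP(n,r)=(0)$, forcing $A=0$, again absurd.

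The real work is indecomposability, and here the plan is to identify the given sequence with the canonical torsion sequence of $B$. Write $(\mathcal{T},\mathcal{F})$ for the torsion pair of Corollary \ref{preinj}, so $\mathcal{T}=\EIP(n,r)$ and $\EKP(n,r)\subseteq\mathcal{F}$, and let $tB\in\mathcal{T}$ denote the torsion submodule of $B$. First I would show $tB=A$: the inclusion $A\hookrightarrow B$ places $A$ inside $tB$ because $A\in\mathcal{T}$, while the quotient $tB/A$ embeds into $C\in\mathcal{F}$ and is therefore torsion-free; being simultaneously a quotient of $tB\in\mathcal{T}$, it lies in $\mathcal{T}\cap\mathcal{F}=(0)$, so $tB=A$. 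Thus $0\to A\to B\to C\to 0$ is exactly the canonical sequence $0\to tB\to B\to B/tB\to 0$.

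Finally I would rule out a nontrivial decomposition $B=B_1\oplus B_2$. Since the torsion radical is additive and commutes with finite direct sums, $A=tB=tB_1\oplus tB_2$ and $C=B/tB=(B_1/tB_1)\oplus(B_2/tB_2)$. Indecomposability of $A$ forces one summand, say $tB_2$, to vanish, so $B_2\in\mathcal{F}$ and $A=tB_1$; indecomposability of $C$ then forces either $B_2=0$ (excluded) or $B_1/tB_1=0$, i.e. $B_1=tB_1=A$ and $B_2=C$. Hence $B\cong A\oplus C$, so $A$ is a direct summand of $B$ and the monomorphism $A\to B$ splits, contradicting that an Auslander--Reiten sequence is non-split. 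I expect this last step, pinning down $tB=A$ and converting a hypothetical decomposition into a splitting of the sequence, to be the main obstacle; the membership claims are immediate once the closure properties and Lemma \ref{cjt} are in place.
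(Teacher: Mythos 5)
Your proof is correct, and the three membership claims are handled exactly as in the paper: Lemma \ref{cjt} gives $B\in\CJT(n,r)$, and the torsion pair of Corollary \ref{preinj} (image-closedness of $\EIP(n,r)$, submodule-closedness of $\EKP(n,r)$, and $\EIP(n,r)\cap\EKP(n,r)=(0)$) rules out $B\in\EIP(n,r)\cup\EKP(n,r)$. Where you genuinely diverge is the indecomposability argument. The paper assumes a decomposition $B=\bigoplus_{i}B_i$, invokes the fact that the irreducible maps $A\to B_i$ and $B_j\to C$ in an Auslander--Reiten sequence are each injective or surjective, and uses a dimension count to extract either an epimorphism $A\to B_i$ or a monomorphism $B_j\to C$; this places $B_i$ in $\EIP(n,r)$ or $B_j$ in $\EKP(n,r)$, whereupon the irreducible map $B_i\to C$, resp.\ $A\to B_j$, would have to vanish by Corollary \ref{preinj} --- a contradiction. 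You instead identify the given sequence with the canonical torsion sequence $0\to tB\to B\to B/tB\to 0$ for the torsion pair $(\mathcal{T},\mathcal{F})$ of Corollary \ref{preinj} (your verification that $tB=A$ via $tB/A\in\mathcal{T}\cap\mathcal{F}=(0)$ is sound), and then use additivity of the torsion radical together with indecomposability of the end terms to show that any nontrivial decomposition forces $B\cong A\oplus C$, splitting the sequence. Both routes rest on Corollary \ref{preinj}; yours trades the facts about irreducible morphisms and the dimension estimate for the (equally standard) functoriality of $t$, and has the mild advantage of applying verbatim to any non-split short exact sequence with indecomposable end terms $A\in\mathcal{T}$ and $C\in\mathcal{F}$ --- the Auslander--Reiten hypothesis is used only to guarantee non-splitness and the indecomposability of $A$ and $C$.
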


\begin{proof}
Let us first of all show that $B$ is indecomposable. Assume that there exists a decomposition $B=\oplus_{i \in I} B_i$ such that $|I|>2$. Then for reasons of dimension it is not possible that all irreducible maps $A \rightarrow B_i$ are injective and all irreducible maps $B_j \rightarrow C$ are surjective (this would imply $\dim A + \dim C < \dim B_i + \dim B_j \leq \dim B$). Thus there exists an epimorphism $A \rightarrow B_i$ for some $i$ or a monomorphism $B_i \rightarrow C$. This now implies that $B_i$ satisfies the equal images property, respectively the equal kernels property. Now in case $B_i \in \EIP(n,r)$, every morphism $B_i \rightarrow C$ is trivial in view of Corollary \ref{preinj}. With the same argument $B_i \in \EKP(n,r)$ yields that every morphism $A \rightarrow B_i$ is trivial, a contradiction. Thus $B$ is indecomposable. Corollary \ref{preinj} yields $B \notin \EIP(n,r) \cup \EKP(n,r)$, whereas $B \in \CJT(n,r)$ follows from Lemma \ref{cjt}.
\end{proof}

\begin{rem}
\textnormal{Returning to the categories $\EIP(kE_r),\ \EKP(kE_r),\ \CR^j(kE_r)$ and $\CJT(kE_r)$, Lemma \ref{cjt} holds in $\modi kE_r$ as well and follows directly from the Snake Lemma \cite[I.5, 5.1]{ass06}. Demanding that $k$ is not a direct summand of the middle-term $B$, Proposition \ref{indec} also holds in $\modi kE_r$.}
\end{rem}

\section{The generalized Kronecker quiver}
We are now going to confine our investigations to the case $B(2,r)$ where $r \geq 2$. The algebra $B(2,r)$ is isomorphic to $\mathcal{K}_r$, the path algebra of the $r$-Kronecker quiver. Note that $\mathfrak{F}_{(2,r)}$ is dense and so are the functors $\mathfrak{F}_{\mathcal{X}}$ from Proposition \ref{func}. Furthermore, we have $\CJT(2,r)=\CR^1(2,r)$. As was mentioned above, the indecomposable equal images modules for $kE_2$ of Loewy length at most 2 have been classified in \cite{cfs09}: The only indecomposable modules in $\EIP(2,2)$ are the preinjective modules over $\mathcal{K}_2$, i.e. the modules $W_{n,2}$ and the simple injective module $S(1)$ \cite[4.2.2]{f11}. This implies that $\EIP(2,2)$ is the additive closure of the preinjectives. Apart from the preprojective modules, that satisfiy the equal kernels property, there are no other indecomposable modules of constant Jordan type. We show that the situation is completely different for $r \geq 3$.\\

The algebra $\mathcal{K}_r$ is wild if $r >2$ and tame if $r=2$.
Recall that the Auslander-Reiten translate for the hereditary algebra $\mathcal{K}_r$, $r \geq 2$, is given by $\tau \cong \Ext^1 (-,\mathcal{K}_r)^*$ \cite[V.II, 1.9]{ass06}.  The components of the Auslander-Reiten quiver $\Gamma( \mathcal{K}_r)$ of $\modi \mathcal{K}_r$ have the following shape if $r >2$:\\
There is exactly one preprojective component $\mathcal{P}$, consisting of the two projective modules and their $\tau^{-1}$-shifts and exactly one preinjective component $\mathcal{I}$ consisting of the two injective modules and their $\tau$-shifts. Ringel has proven in \cite{ri78}, that the remaining (regular) components are of type $\mathbb{Z}A_{\infty}$.\\

From now on, we assume that $r > 2$. We write $X_{\alpha}:=X_{\alpha}^1=\coker \alpha(1)$ for $\alpha \in k^r \backslash 0$. The module $X_{\alpha}$ is a brick and has no proper submodules apart from direct sums of $P(1)$ (Proposition \ref{prop}, (iii), (iv)). Via computing the dimension vectors of the preprojective and preinjective modules, we can conclude that $X_{\alpha}$ is regular and, since it has no proper regular submodules, thus quasi-simple. Moreover, computation yields 
\begin{prop}\label{dual} Let $\alpha \in k^r \backslash 0$. Then
$D X_{\alpha}=\Ext^1(X_{\alpha}, \mathcal{K}_r)^*=\tau X_{\alpha}$.
\end{prop}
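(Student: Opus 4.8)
The rightmost identity $\Ext^1(X_\alpha,\mathcal{K}_r)^*=\tau X_\alpha$ is just the Auslander--Reiten formula for hereditary algebras recalled above, and it applies to $X_\alpha$: being a regular brick, $X_\alpha$ admits no non-zero map into a preprojective module, so $\Hom(X_\alpha,\mathcal{K}_r)=\Hom(X_\alpha,P(0)\oplus P(1))=0$. Hence the entire content of the statement is the first identity $DX_\alpha\cong\tau X_\alpha$, and the plan is to exhibit both modules as kernels of surjections $I(1)\to I(0)$ and then to identify those two maps.

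First I would apply the exact contravariant duality $D$ to the minimal projective resolution $0\to P(1)\xrightarrow{\alpha(0)_1}P(0)\to X_\alpha\to0$. Since $D$ interchanges projectives and injectives and reverses the labelling of the vertices, one has $DP(0)\cong I(1)$ and $DP(1)\cong I(0)$, so $D$ produces a short exact sequence $0\to DX_\alpha\to I(1)\xrightarrow{D(\alpha(0)_1)}I(0)\to0$. Next I would apply the Nakayama functor $\nu:=\Hom_{\mathcal{K}_r}(-,\mathcal{K}_r)^*$ to the same resolution. Using $\nu P(i)\cong I(i)$ together with $\nu X_\alpha=\Hom(X_\alpha,\mathcal{K}_r)^*=0$, right-exactness of $\nu$ yields $0\to\tau X_\alpha\to I(1)\xrightarrow{\nu(\alpha(0)_1)}I(0)\to0$, where the kernel of $\nu(\alpha(0)_1)$ is $\Ext^1(X_\alpha,\mathcal{K}_r)^*=\tau X_\alpha$ by the formula above. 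Thus $DX_\alpha$ and $\tau X_\alpha$ are realized as the kernels of the two surjections $D(\alpha(0)_1)$ and $\nu(\alpha(0)_1)$ in $\Hom(I(1),I(0))$.

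It remains to see that these two surjections are proportional, for then they have literally the same kernel. Here $\dim_k\Hom(I(1),I(0))=r$ and $\End(I(0))=\End(I(1))=k$. Both assignments $\varphi\mapsto D\varphi$ and $\varphi\mapsto\nu\varphi$ are natural in $\varphi$, and since $D$ and $\nu$ are defined functorially from the algebra they are compatible with the twist; they therefore define $\GL_r(k)$-equivariant linear maps from the arrow space $V=\Hom(P(1),P(0))\cong k^r$ to $\Hom(I(1),I(0))$, the modules $S(0),I(0),I(1)$ being $\GL_r(k)$-stable. As $V$ is the irreducible standard representation, any non-zero such map is injective, hence an isomorphism onto the $r$-dimensional target; since neither $D(\alpha(0)_1)$ nor $\nu(\alpha(0)_1)$ vanishes, Schur's Lemma forces the two maps to be proportional. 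In particular $\nu(\alpha(0)_1)$ and $D(\alpha(0)_1)$ are scalar multiples of one another, so $DX_\alpha=\ker D(\alpha(0)_1)=\ker\nu(\alpha(0)_1)=\tau X_\alpha$.

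The main obstacle is precisely this last identification of the two maps. The symmetry argument above is clean but presupposes that $I(0)$ and $I(1)$ are $\GL_r(k)$-stable and that $\Hom(I(1),I(0))$ is the irreducible standard $\GL_r(k)$-module of the same type as $V$; alternatively one can, as the phrase \emph{computation yields} suggests, write $D(\alpha(0)_1)$ and $\nu(\alpha(0)_1)$ in explicit bases of $I(1)$ and $I(0)$ and check by hand that both equal $\sum_i\alpha_i(\,\cdot\,)$ up to one common non-zero scalar. Either way, the crux is that the duality $D$ and the Nakayama twist $\nu$ act identically, up to scalar, on the single arrows $\gamma^{(0)}_i$.
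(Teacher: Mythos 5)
Your proposal is correct, and it supplies considerably more than the paper does: the paper dispatches this proposition with the phrase ``computation yields,'' so your explicit-bases fallback is essentially the omitted computation, while your main argument dresses it up conceptually. The skeleton is sound: $\tau\cong\Ext^1(-,\mathcal{K}_r)^*$ is recalled in the paper for the hereditary algebra $\mathcal{K}_r$ (it needs only that $X_\alpha$ has no projective summands, which holds since $X_\alpha$ is regular); applying $D$ and $\nu$ to $0\to P(1)\to P(0)\to X_\alpha\to 0$ correctly realizes $DX_\alpha$ and $\tau X_\alpha$ as kernels of surjections $I(1)\to I(0)$ (the surjectivity of $\nu(\alpha(0))$ does use $\Hom(X_\alpha,\mathcal{K}_r)=0$, which you justify; one can also see it directly since a non-zero map $X_\alpha\to P(0)$ would be onto for degree reasons and hence impossible by dimension count). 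The one soft spot is the Schur argument: the twist action on $\Hom(I(1),I(0))$ is a priori only a projective representation, because the identifications $I(i)^{(g)}\cong I(i)$ are unique only up to scalar, and the compatibility of $D$ and $\nu$ with twisting is asserted rather than checked. This is repairable (proportional maps have equal kernels, so a projectively equivariant statement suffices, and the actions can be made honest via the canonical action of $\GL_r(k)$ on the algebra itself), but as written it is the least rigorous step. Your alternative route closes the gap cleanly: identifying $\Hom(I(1),I(0))$ with $(e_1\mathcal{K}_re_0)^*$, both $D(\alpha(0))$ and $\nu(\alpha(0))$ become evaluation against $\sum_i\alpha_i\gamma_i^{(0)}$, so their kernels coincide on the nose. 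That bare-hands check is presumably what the paper means by ``computation,'' and your version is a legitimate, if slightly more elaborate, proof of the same statement.
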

According to Thereom \ref{preinj}, we have $\mathcal{I} \subseteq \EIP(2,r)$ and $\mathcal{P} \subseteq \EKP(2,r)$ whereas Theorem \ref{wmod} implies that $W_{n,2}^{(r)} \notin \mathcal{I}$ and $M_{n,2}^{(r)} \notin \mathcal{P}$ for $n>2$. Thus these modules are examples of regular modules with the equal images property and with the equal kernels property, respectively. 

\subsection{Regular components}
We will now describe the occurrence of regular equal images and equal kernels modules in the Auslander-Reiten quiver $\Gamma( \mathcal{K}_r)$ of $\modi \mathcal{K}_r$. 

In order to show the existence of equal images as well as equal kernels modules in every regular component of $\Gamma( \mathcal{K}_r)$, we record the following dual version of a lemma by Kerner:
\begin{lemma}[Kerner \cite{ker}, 4.6]\label{kerner}
If $X, Y$ are regular modules over a wild hereditary algebra, there exists an integer $N$ with $\Hom(Z,\tau^{-m}(X))=0$ for all $m \geq N$ and all regulars $Z$ with $\dim_k Z \leq \dim_k Y$.
\end{lemma}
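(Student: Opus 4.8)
The plan is to exploit the two structural features of a connected wild hereditary algebra such as $\mathcal{K}_r$: its Coxeter transformation $\Phi$ has spectral radius $\rho>1$ with strictly positive left and right Perron--Frobenius eigenvectors, and the Auslander--Reiten translate $\tau$ restricts to an auto-equivalence of the full subcategory of regular modules. First I would reduce to $Z$ indecomposable by additivity, and observe that $\dim_k Z\le\dim_k Y$ leaves only finitely many dimension vectors to treat; it then suffices to produce a bound $N=N(X,d)$ for each such $d$ and take the maximum. Using the shift-equivalence $\Hom(Z,\tau^{-m}X)\cong\Hom(\tau^{m}Z,X)$ (valid for regular $Z,X$), the claim becomes that every map from the upward-translated module $\tau^{m}Z$ into the fixed module $X$ vanishes for $m\gg0$.

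The numerical backbone is the Euler form. Since $\mathcal{K}_r$ is hereditary, $\langle\underline{\dim}\,\tau^{m}Z,\underline{\dim}X\rangle=\dim_k\Hom(\tau^{m}Z,X)-\dim_k\Ext^1(\tau^{m}Z,X)$ and $\underline{\dim}\,\tau^{m}Z=\Phi^{m}\underline{\dim}Z$. By Perron--Frobenius one has $\Phi^{m}\underline{\dim}Z\sim c\rho^{m}v^{+}$ with $c>0$ and $v^{+}$ the positive eigenvector for $\rho$, so $\langle\underline{\dim}\,\tau^{m}Z,\underline{\dim}X\rangle\sim c\rho^{m}\langle v^{+},\underline{\dim}X\rangle$. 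The sign of $\langle v^{+},\underline{\dim}X\rangle$ is negative by the standard defect computation for wild hereditary algebras (using $\langle a,b\rangle=-\langle b,\Phi a\rangle$ and positivity of the dimension vectors), whence $\langle\underline{\dim}\,\tau^{m}Z,\underline{\dim}X\rangle\to-\infty$ and $\dim_k\Ext^1(\tau^{m}Z,X)\to\infty$. This is consistent with the Auslander--Reiten formula $\Ext^1(\tau^{m}Z,X)\cong D\Hom(X,\tau^{m+1}Z)$, whose dimension likewise grows like $\rho^{m}$.

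The delicate point, and the step I expect to be the main obstacle, is that the Euler form only controls the \emph{difference} $\dim_k\Hom-\dim_k\Ext^1$: the estimate above forces the growth of $\Ext^1$ but not the vanishing of $\Hom$. To upgrade it I would argue by contradiction: if $\Hom(\tau^{m}Z,X)\neq0$ for infinitely many $m$, choose nonzero $f_{m}\colon\tau^{m}Z\to X$ with image $I_{m}\subseteq X$. As $X$ is a fixed finite-dimensional module it has only finitely many submodules, so some fixed $I\subseteq X$ arises as $I_{m}$ for infinitely many $m$, giving surjections $\tau^{m}Z\twoheadrightarrow I$. One then plays the dimension growth of $\tau^{m}Z$ against the rigidity of the regular $\mathbb{Z}A_{\infty}$-components, for instance by an induction on $\dim_k X$ in which $X$ is replaced by the proper submodule $I$ and the shift-equivalence is applied once more to $\Hom(\tau^{m}Z,I)\cong\Hom(Z,\tau^{-m}I)$. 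Carrying this reduction through is exactly the substance of Kerner's analysis of $\Hom$-spaces in wild regular components, and is where the real work lies.

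Finally, for the uniformity over \emph{all} $Z$ with $\dim_k Z\le\dim_k Y$ (not merely a generic one), I would pass back to the original formulation and note that any nonzero $\varphi\colon Z\to\tau^{-m}X$ has image $U\subseteq\tau^{-m}X$ with $\dim_k U\le\dim_k Y$. Such a $U$ is a quotient of the regular module $Z$ and a submodule of the regular module $\tau^{-m}X$, hence has neither a preprojective nor a preinjective summand and is therefore itself regular. It thus suffices to show that $\tau^{-m}X$ admits no nonzero regular submodule of dimension at most $\dim_k Y$ once $m\gg0$, a condition depending only on $X$ and the bound $\dim_k Y$. Applying the single-pair vanishing along the finitely many dimension vectors $d$ with $|d|\le\dim_k Y$ and taking the maximum of the resulting $N(X,d)$ then yields the required uniform $N$.
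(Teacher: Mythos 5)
The paper does not actually prove this lemma: it is quoted as \cite[4.6]{ker} (a known result of Kerner on wild hereditary algebras, stated here in dual form), so there is no in-paper argument to compare yours against. Judged on its own terms, your proposal has a genuine gap at exactly the point you flag yourself. The reductions in your first and last paragraphs are sound and worth keeping: passing to $\Hom(\tau^{m}Z,X)$ via the $\tau$-equivalence on regulars, and the observation that the image of any nonzero $\varphi\colon Z\to\tau^{-m}X$ is a quotient of a regular and a submodule of a regular, hence regular of dimension at most $\dim_k Y$, which reformulates the claim as ``$\tau^{-m}X$ has no nonzero regular submodule of dimension $\le\dim_k Y$ for $m\gg0$'' and correctly disposes of the uniformity issue (note that fixing a dimension vector $d$ does \emph{not} by itself reduce to finitely many modules $Z$ over a wild algebra, so this reformulation is doing real work). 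But the Euler-form/Perron--Frobenius computation, as you concede, only forces $\dim_k\Hom-\dim_k\Ext^1\to-\infty$, i.e.\ the growth of $\Ext^1$, and says nothing about the vanishing of $\Hom$; the sentence ``carrying this reduction through is exactly the substance of Kerner's analysis\dots and is where the real work lies'' is an admission that the core of the lemma is not proved. Deferring the essential step to the very result being proved is not a proof.

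There is also a concrete false step in the contradiction argument: a finite-dimensional module over a wild (or even tame) hereditary algebra can have infinitely many submodules (e.g.\ infinitely many copies of $P(1)$ inside $P(0)$ for the Kronecker algebra), so you cannot extract a single submodule $I\subseteq X$ realized as $I_m$ for infinitely many $m$; at best you can fix the dimension vector of $I_m$, which is too weak to drive the intended induction. To close the gap you would need the actual analytic input from Kerner/Baer's theory --- e.g.\ that the normalized dimension vectors $\Phi^{-m}\underline{\dim}\,X/\rho^{m}$ converge to the positive eigendirection $y^{-}$ together with a lemma guaranteeing that small regular submodules of $\tau^{-m}X$ would force $\langle\underline{\dim}\,Z,\underline{\dim}\,\tau^{-m}X\rangle\ge0$ along a subsequence, or an argument via quasi-length filtrations in the $\mathbb{Z}A_{\infty}$-components --- none of which is supplied here.
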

Note that our next result also follows from Corollary \ref{preinj} in combination with \cite[Theorem (B)]{assker96}, a general result concerning non-splitting torsion pairs for wild hereditary algebras.
\begin{theorem}\label{shape}
Let $\mathcal{C}$ be a regular component of $\Gamma(B(2,r))$. Then $\mathcal{C}$ contains two uniquely determined quasi-simple modules $W_{\mathcal{C}}$ and $M_{\mathcal{C}}$ such that
$$(\rightarrow W_{\mathcal{C}})=\mathcal{C} \cap \EIP(2,r) \text{ and }
(M_{\mathcal{C}} \rightarrow)=\mathcal{C} \cap \EKP(2,r).$$
\end{theorem}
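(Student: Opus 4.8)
The plan is to reduce the statement to two non-emptiness assertions, namely that every regular component $\mathcal{C}$ meets both $\EIP(2,r)$ and $\EKP(2,r)$, and then to invoke Proposition \ref{ZA}. Indeed, suppose we know $\mathcal{C} \cap \EIP(2,r) \neq \emptyset$ and $\mathcal{C} \cap \EKP(2,r) \neq \emptyset$. Since $\EIP(2,r) \cap \EKP(2,r) = (0)$ while every module in $\mathcal{C}$ is a nonzero indecomposable, any module in $\mathcal{C} \cap \EKP(2,r)$ lies outside $\EIP(2,r)$, so $\mathcal{C} \not\subseteq \EIP(2,r)$; symmetrically $\mathcal{C} \not\subseteq \EKP(2,r)$. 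The dichotomy in Proposition \ref{ZA} then forces the second alternative in each case, producing quasi-simple modules $W_{\mathcal{C}}, M_{\mathcal{C}} \in \mathcal{C}$ with $(\rightarrow W_{\mathcal{C}}) = \mathcal{C} \cap \EIP(2,r)$ and $(M_{\mathcal{C}} \rightarrow) = \mathcal{C} \cap \EKP(2,r)$. Uniqueness follows from the observation that in a $\mathbb{Z}A_{\infty}$-component the predecessor cone of a quasi-simple module determines that quasi-simple, since two distinct quasi-simples have distinct predecessor cones.

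By the duality $D$ of $\modi B(2,r)$, which satisfies $D\,\EIP(2,r) = \EKP(2,r)$ and carries regular components to regular components, it suffices to prove that every regular component meets $\EIP(2,r)$; applying this to $D\mathcal{C}$ then yields $\mathcal{C} \cap \EKP(2,r) \neq \emptyset$. So fix a regular component $\mathcal{C}$ and a quasi-simple module $M_0 \in \mathcal{C}$. By Theorem \ref{main}(a) a regular module $M$ lies in $\EIP(2,r)$ iff $\Ext^1(X_{\alpha}, M) = 0$ for all $\alpha$; the Auslander-Reiten formula, used as in the proof of Corollary \ref{preinj}, together with $\tau X_{\alpha} = DX_{\alpha}$ from Proposition \ref{dual} and the vanishing of $\Hom$ from (pre)injective to regular modules, rewrites this as $\Hom(M, \tau X_{\alpha}) = 0$ for all $\alpha$. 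Using that $\tau$ restricts to an auto-equivalence of the regular modules, for $M = \tau^{m} M_0$ (again regular, hence in $\mathcal{C}$) one has $\Hom(\tau^{m} M_0, \tau X_{\alpha}) \cong \Hom(M_0, \tau^{-(m-1)} X_{\alpha})$, so it remains to make these spaces vanish for all $\alpha$ simultaneously by taking $m$ large.

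For a single $\alpha$, Kerner's Lemma \ref{kerner} (with $X = X_{\alpha}$ and $Y = M_0$) provides a bound $N$ with $\Hom(M_0, \tau^{-l} X_{\alpha}) = 0$ for all $l \geq N$. The main point, and the principal obstacle, is to obtain one bound valid for every $\alpha \in k^r \setminus 0$ at once, since a priori Kerner's bound depends on the module $X_{\alpha}$, and there are infinitely many of them. Here I would exploit the $\GL_r(k)$-action: as recorded in the proof of Theorem \ref{wmod}, all the $X_{\alpha}$ are twists of a fixed one, say $X_{\alpha} \cong (X_{\alpha_0})^{(g)}$ with $\alpha_0 = (0,\ldots,0,1)$. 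Since twisting is a $k$-linear auto-equivalence commuting with $\tau$ and preserving both $\dim_k$ and regularity, one obtains
\[
\Hom(M_0, \tau^{-l} X_{\alpha}) \cong \Hom(M_0, (\tau^{-l} X_{\alpha_0})^{(g)}) \cong \Hom(M_0^{(g^{-1})}, \tau^{-l} X_{\alpha_0}),
\]
where $M_0^{(g^{-1})}$ is regular with $\dim_k M_0^{(g^{-1})} = \dim_k M_0$. Applying Kerner's Lemma \ref{kerner} once, with $X = X_{\alpha_0}$ and $Y = M_0$, yields a single $N$ such that $\Hom(Z, \tau^{-l} X_{\alpha_0}) = 0$ for all $l \geq N$ and all regular $Z$ with $\dim_k Z \leq \dim_k M_0$; in particular the displayed groups vanish for every $\alpha$ once $l \geq N$. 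Hence $\tau^{m} M_0 \in \EIP(2,r) \cap \mathcal{C}$ for $m \geq N + 1$, which establishes the key step and, through the reductions above, the full statement.
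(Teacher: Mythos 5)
Your proof is correct, and its skeleton (reduce to non-emptiness of $\mathcal{C}\cap\EIP(2,r)$ and $\mathcal{C}\cap\EKP(2,r)$, then invoke Proposition \ref{ZA}; get non-emptiness from Kerner's Lemma \ref{kerner}) is the same as the paper's. The one place where you genuinely diverge is the mechanism for making Kerner's bound uniform over the infinite family $\left\{X_{\alpha}\right\}_{\alpha\in k^r\backslash 0}$. The paper's trick is to put the $X_{\beta}$ into the ``$Z$'' slot of Lemma \ref{kerner}: since $\dim_k X_{\beta}=\dim_k X_{\alpha}$ for all $\beta$, a single application with $X$ a fixed module of $\mathcal{C}$ and $Y=X_{\alpha}$ gives one $N$ with $\Hom(X_{\beta},\tau^{-m}X)=0$ for all $\beta$ and $m\geq N$, which by Theorem \ref{main}(b) puts $\tau^{-m}X$ directly into $\EKP(2,r)$ --- no Auslander--Reiten formula and no twisting needed; the $\EIP$ statement is then obtained by duality. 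You instead fix $X_{\alpha_0}$ in the ``$X$'' slot, pass through the Auslander--Reiten formula to the $\Ext^1$-characterization of $\EIP(2,r)$, and absorb the variation in $\alpha$ into a $\GL_r(k)$-twist of $M_0$, using that twisting is a dimension-preserving auto-equivalence commuting with $\tau$ so that all the twists $M_0^{(g^{-1})}$ fall under one application of Lemma \ref{kerner} with $Y=M_0$. This is valid (the transitivity of the twist action on the $X_{\alpha}^{0,1}$ is exactly what the paper records in the proof of Theorem \ref{wmod}), but it costs you two extra inputs --- the stable-Hom bookkeeping in the AR formula and the compatibility of twisting with $\tau$ --- that the equal-dimension observation renders unnecessary. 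On the other hand, your version of the reduction is slightly more careful than the paper's: you explicitly rule out the alternatives $\mathcal{C}\subseteq\EIP(2,r)$ and $\mathcal{C}\subseteq\EKP(2,r)$ in Proposition \ref{ZA} via $\EIP(n,r)\cap\EKP(n,r)=(0)$, and you address uniqueness of $W_{\mathcal{C}}$ and $M_{\mathcal{C}}$, both of which the paper leaves implicit.
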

\begin{proof}
Let $\mathcal{C}$ be a regular component, $X$ be in $\mathcal{C}$. Since we have $\dim_k X_{\alpha} = \dim_k X_{\beta}$ for all $\alpha, \beta \in k^r \backslash 0$ and $\mathcal{K}_r$ is wild, we can apply Lemma \ref{kerner} with $Y=X_{\alpha}$ for some $\alpha$ and $Z$ running through all $X_{\beta}$, $\beta \in k^r \backslash 0$. This implies that there exists an $N$ such that $\Hom_{\mathcal{K}_r}(X_{\alpha},\tau^{-m}(X))=0$ for all $m \geq N$ and all $\alpha \in  k^r \backslash 0$. In view of Theorem \ref{main} we thus have $\tau^{-m}(X) \in \EKP(2,r)$ for all $m \geq N$. Dually, $\EIP(2,r) \cap \mathcal{C} \neq \emptyset$. Now apply Proposition \ref{ZA}.
\end{proof}

Thus the regular components of $\Gamma(\mathcal{K}_r)$ have the following shape

\[ \begin{xy}
\xymatrix@R=0.5em@C=0.5em{
\vdots  && \vdots && \vdots &&\vdots  && \vdots && \vdots &&\vdots\\
\cdots &   \circ   \ar[rd]  &&  \circ   \ar[rd]   && \circ  \ar[rd]   &&  \circ  \ar[rd] &&  \circ  \ar[rd]  &&  \circ  \ar[rd] && \circ  \ar[rd]  & \cdots \\
 \circ \ar[rd] \ar[ru] && \circ \ar[rd] \ar[ru] \ar@{-->}[ll] && \circ \ar[rd] \ar[ru] \ar@{-->}[ll] && \circ \ar[rd] \ar[ru] \ar@{-->}[ll] && \circ \ar[rd] \ar[ru] \ar@{-->}[ll] && \circ \ar[rd] \ar[ru] \ar@{-->}[ll] && \circ \ar[rd] \ar[ru]  \ar@{-->}[ll] &&\circ  \ar@{-->}[ll] \\
  \cdots &  \circ \ar[ru]  \ar[rd]  &&  \circ \ar[ru]  \ar[rd] \ar@{-->}[ll]  &&  \circ \ar[ru]  \ar[rd] \ar@{-->}[ll]  &&  \circ \ar[ru]  \ar[rd] \ar@{-->}[ll] &&  \circ \ar[ru]  \ar[rd] \ar@{-->}[ll]  &&  \circ \ar[ru]  \ar[rd] \ar@{-->}[ll]   && \circ   \ar[rd] \ar@{-->}[ll] \ar[ru] & \cdots \\
\nabla \ar[ru]  \ar[rd] &&   \circ \ar[ru]  \ar[rd] \ar@{-->}[ll] &&  \circ  \ar[ru]  \ar[rd] \ar@{-->}[ll]  &&  \circ  \ar[ru]  \ar[rd] \ar@{-->}[ll] &&  \circ  \ar[ru]  \ar[rd] \ar@{-->}[ll] &&  \circ  \ar[ru]  \ar[rd] \ar@{-->}[ll]&&  \circ  \ar[rd] \ar@{-->}[ll] \ar[ru] && \Delta  \ar@{-->}[ll] \\
 \cdots & \nabla \ar[ru]  \ar[rd]  &&   \circ \ar[ru]  \ar[rd] \ar@{-->}[ll]  &&  \circ \ar[ru]  \ar[rd] \ar@{-->}[ll]  &&  \circ \ar[ru]  \ar[rd] \ar@{-->}[ll] &&  \circ \ar[ru]  \ar[rd] \ar@{-->}[ll]   && \circ   \ar[ru]  \ar[rd] \ar@{-->}[ll] && \Delta  \ar[ru]  \ar[rd] \ar@{-->}[ll] &  \cdots \\
\nabla  \ar[rd]  \ar[ru] && \nabla  \ar[rd]  \ar[ru] \ar@{-->}[ll] &&  \circ \ar[rd]  \ar[ru] \ar@{-->}[ll] && \circ \ar[rd]  \ar[ru] \ar@{-->}[ll] && \circ \ar[rd]  \ar[ru] \ar@{-->}[ll] &&  \circ \ar[rd]  \ar[ru] \ar@{-->}[ll] && \Delta \ar[ru] \ar[rd] \ar@{-->}[ll] && \Delta \ar@{-->}[ll] \\
 \cdots & \nabla  \ar[ru]  \ar[rd]  &&  \nabla  \ar[ru]  \ar[rd] \ar@{-->}[ll]  &&   \circ \ar[ru]  \ar[rd] \ar@{-->}[ll]  &&  \circ \ar[ru]  \ar[rd] \ar@{-->}[ll] &&  \circ \ar[ru]  \ar[rd] \ar@{-->}[ll]   && \Delta  \ar[ru]  \ar[rd] \ar@{-->}[ll] && \Delta\ar[rd] \ar[ru]  \ar@{-->}[ll] & \cdots \\
\nabla   \ar[ru] && \nabla   \ar[ru] \ar@{-->}[ll] &&  \nabla   \ar[ru] \ar@{-->}[ll] &&   \circ \ar[ru] \ar@{-->}[ll] &&  \circ \ar[ru] \ar@{-->}[ll]&& \Delta \ar[ru] \ar@{-->}[ll] && \Delta \ar[ru] \ar@{-->}[ll]&& \Delta \ar@{-->}[ll]}
\end{xy} \]

where $\nabla$ and $\Delta$ indicate that the corresponding module is an object in $\EIP(2,r)$, resp. in $\EKP(2,r)$. Hence for each regular component $\mathcal{C}$, the {\it width} $\mathcal{W}(\mathcal{C})$ of the gap between these two modules, i.e. the natural number $k$ such that $\tau^{k+1}(M_{\mathcal{C}})=W_{\mathcal{C}}$ is an invariant for $\mathcal{C}$.\\

{\bf Examples}
\begin{enumerate}
\item Let $\mathcal{C}_n$ be the component containg the module $W_{n,2}^{(r)}$ for $n > 2$. By Theorem \ref{wmod} we have $\tau^{-1}(W_{n,2}^{(r)}) \in \EKP(2,r)$ and thus $W_{n,2}^{(r)}=W_{\mathcal{C}_n}$ and $\tau^{-1}(W_{n,2}^{(r)})=M_{\mathcal{C}_n}$. Hence $\mathcal{W}(\mathcal{C}_n)=0$.

\item Let $\mathcal{C}_{\alpha}$ be the component containing the quasi-simple brick $X_{\alpha}$. Recall that by Proposition \ref{dual}, we have $\tau (X_{\alpha}) \cong D X_{\alpha}$. Since $\mathcal{K}_r$ is wild hereditary, $\tau$ is an equivalence on the full subcategory of regular modules and hence
$$\Hom_{\mathcal{K}_r}(X_{\beta},\tau^{-1}(X_{\alpha})) \cong \Hom_{\mathcal{K}_r}(\tau(X_{\beta}),X_{\alpha}) \cong \Hom_{\mathcal{K}_r}(DX_{\beta},X_{\alpha})$$ 
for all $\beta \in k^r \backslash 0$. Proper submodules of $X_{\alpha}$ are of the form $P(2)^{\oplus m}$  (Proposition \ref{prop}, (iv)) and, dually, proper factor modules of $DX_{\beta}$ are of the form $I(1)^{\oplus m'}$. Hence the rightmost term is equal to zero. According to Theorem \ref{main}, we thus obtain $\tau^{-1}(X_{\alpha}) \in \EKP(2,r)$. Using the Auslander-Reiten formula, we can analogously show that $\tau^2(X_{\alpha}) \in \EIP(2,r)$. Hence $\mathcal{W}(C_{\alpha})=2.$

\item Let $\mathcal{C}_{\lambda}$ be the component containing the brick $E^{(\lambda)}$ for $\lambda \in k^r \backslash 0$ with dimension vector $(1,1)$  on which $\gamma_i$ acts via multiplication with $\lambda_i$. Using the Auslander-Reiten formula in combination with Theorem \ref{main} we have
$$\Ext^1_{\mathcal{K}_r}(X_{\alpha},\tau(E^{(\lambda)})) \cong \Hom_{\mathcal{K}_r} (E^{(\lambda)},X_{\alpha})=0$$
and hence $\tau(E^{(\lambda)}) \in \EIP(n,r)$. Dualizing yields
\begin{align*}
\Hom_{\mathcal{K}_r}(X_{\alpha},\tau^{-1}(E^{(\lambda)}))& \cong \Hom_{\mathcal{K}_r}(DX_{\alpha},E^{(\lambda)})\\& \cong \Hom_{K_r}(D(E^{(\lambda)}),X_{\alpha})\\
& \cong \Hom_{\mathcal{K}_r} (E^{(\frac{1}{\lambda})},X_{\alpha})=0
\end{align*}
where $(\frac{1}{\lambda})_i=\frac{1}{\lambda_i}$ if $\lambda_i \neq 0$ and $(\frac{1}{\lambda})_i=0$ else, for all $1 \leq i \leq r$.
Hence $\mathcal{W} (C_{\lambda})=1$.

\end{enumerate}

The examples show that $\mathcal{W}(\mathcal{C})$ indeed varies while running through the different regular components and we will show that there is no upper boundary for this number.
\vspace*{5mm}

In \cite{ker90}, Kerner has defined an invariant for regular components of a wild hereditary algebra $A$.
 Let $\mathcal{C}$ be a regular component of $A$ and $X$ some quasi-simple module in $\mathcal{C}$. The quasi-rank of $\mathcal{C}$ is defined via
$$\rk \mathcal{C} = \min \left\{m \in \mathbb{Z}| \rad (X,\tau^l X) \neq 0 \; \forall l \geq m \right\},$$
where for two indecomposable modules $X,Y \in \modi \mathcal{K}_r$, $\rad (X,Y)$ is the vector space of all non-isomorphisms from $X$ to $Y$ (cf. \cite[A.3, 3.5]{ass06}). Hence for $l \neq 0$ and $X$ regular, it is $\rad (X,\tau^l X)= \Hom (X,\tau^l X)$. A theorem by Hoshino (cf. \cite[V]{cb}) says that for $A=\mathcal{K}_r$, $\rk$ is bounded above by 1.  In view of \cite[1.6]{ker90}, we can conclude that $\mathcal{C}$ contains a brick if and only if $\rk \mathcal{C}=1$.

\begin{prop}\label{rkW}~
\begin{enumerate}[(i)]
\item Let $\mathcal{C}$ be a regular component of $K_r$. If $\mathcal{C}$ does not possess a brick, then we have $|\rk \mathcal{C}| \leq \mathcal{W} (\mathcal{C})$.
\item Let $n \in \mathbb{N}$. Then there exists a regular component $\mathcal{C}$ of $\mathcal{K}_r$ such that $\mathcal{W}(\mathcal{C}) > n$.
\end{enumerate}
\end{prop}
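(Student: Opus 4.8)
The plan is to derive (i) by transporting the quasi-rank, which is read off from a single quasi-simple module, onto the cone picture of Theorem \ref{shape}, and then to obtain (ii) from (i) together with the unboundedness of the quasi-rank on $\Gamma(\mathcal{K}_r)$. For (i), I would first use that $\tau$ restricts to an equivalence on the regular modules, so that $\rad(X,\tau^l X)$ is up to isomorphism independent of the quasi-simple $X\in\mathcal{C}$; this lets me evaluate the quasi-rank on the distinguished module $X:=W_{\mathcal{C}}$. By Theorem \ref{shape} we have $M_{\mathcal{C}}=\tau^{-\mathcal{W}(\mathcal{C})-1}(W_{\mathcal{C}})$, and for every $l\geq \mathcal{W}(\mathcal{C})+1$ the quasi-simple module $\tau^{-l}(W_{\mathcal{C}})=\tau^{-(l-\mathcal{W}(\mathcal{C})-1)}(M_{\mathcal{C}})$ is a quasi-simple successor of $M_{\mathcal{C}}$, hence lies in $\mathcal{C}\cap\EKP(2,r)$. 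Since $W_{\mathcal{C}}\in\EIP(2,r)$, Corollary \ref{preinj} forces $\Hom(W_{\mathcal{C}},\tau^{-l}(W_{\mathcal{C}}))=0$, and as these indices are nonzero this gives $\rad(X,\tau^{l'}X)=0$ for all $l'\leq-\mathcal{W}(\mathcal{C})-1$. By the definition of the quasi-rank, no $m\leq-\mathcal{W}(\mathcal{C})-1$ can satisfy ``$\rad(X,\tau^lX)\neq0$ for all $l\geq m$'' (take $l=-\mathcal{W}(\mathcal{C})-1$), so $\rk\mathcal{C}\geq-\mathcal{W}(\mathcal{C})$. As $\mathcal{C}$ has no brick, the criterion recalled before the proposition (Hoshino's bound $\rk\mathcal{C}\leq1$ with \cite[1.6]{ker90}) gives $\rk\mathcal{C}\leq0$, whence $|\rk\mathcal{C}|=-\rk\mathcal{C}\leq\mathcal{W}(\mathcal{C})$.

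For (ii), part (i) reduces the task to a statement purely about quasi-rank: every regular component with $\rk\mathcal{C}\leq0$ is brick-free, and for such a component $\mathcal{W}(\mathcal{C})\geq|\rk\mathcal{C}|$. It therefore suffices to exhibit, for each $n$, a regular component with $\rk\mathcal{C}\leq-(n+1)$, and then $\mathcal{W}(\mathcal{C})\geq|\rk\mathcal{C}|\geq n+1>n$. I would obtain such components from Kerner's analysis of the quasi-rank of wild hereditary algebras \cite{ker90}: the quasi-rank is unbounded below on the regular components of $\mathcal{K}_r$ (components whose quasi-simple modules have large minimal dimension have arbitrarily negative quasi-rank, and no uniform bound on the dimensions of quasi-simple regular $\mathcal{K}_r$-modules exists). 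Selecting $\mathcal{C}$ with $\rk\mathcal{C}\leq-(n+1)$ completes the argument.

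The bookkeeping in (i) is routine once the orientation is fixed; the substantive point is (ii). I expect the main obstacle to be producing the deep components with very negative quasi-rank, and in particular justifying (or correctly citing) that unboundedness, since it does not follow from the tools assembled in this section. The reason the naive, hands-on attempts fail is instructive and worth recording: a quasi-simple sitting strictly in the gap must simultaneously receive a nonzero map from the family $\{X_{\alpha}\}$ (to avoid $\EKP(2,r)$) and map nonzero into $\tau X_{\beta}=DX_{\beta}$ for some $\beta$ (to avoid $\EIP(2,r)$, using Proposition \ref{dual}); iterating this along a $\tau^{-1}$-chain to widen the gap is blocked by Lemma \ref{kerner}, which forces the relevant $\Hom$-spaces to vanish once one passes to sufficiently high translates. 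Thus arbitrarily large width cannot be manufactured within the range of the $X_{\alpha}$, and one genuinely needs components lying far out in $\Gamma(\mathcal{K}_r)$ — exactly what the unboundedness of the quasi-rank supplies.
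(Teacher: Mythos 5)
Your argument is correct and follows the paper's own proof essentially verbatim: part (i) bounds the quasi-rank below by $-\mathcal{W}(\mathcal{C})$ using $\Hom(W_{\mathcal{C}},\tau^{-\mathcal{W}(\mathcal{C})-1}(W_{\mathcal{C}}))=0$ from Corollary \ref{preinj} and combines this with $\rk\mathcal{C}\leq 0$ for brick-free components, and part (ii) invokes the unboundedness below of the quasi-rank on regular components of $\mathcal{K}_r$. The only difference is the source for that last fact: the paper cites the Kerner--Lukas result \cite[3.1]{kerlu} stating $\inf\{\rk(\mathcal{C})\}=-\infty$, which is exactly the external input you correctly identified as the one step needing a precise citation.
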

\begin{proof}
$(i)$: Choose the quasi-simple module $W_{\mathcal{C}}$ in $\mathcal{C}$ given by Theorem \ref{shape}. The module $\tau^{- \mathcal{W} (\mathcal{C}) -1}(W_{\mathcal{C}})=M_C$ satisfies the equal kernels property and hence by Corollary \ref{preinj}
$\Hom(W_{\mathcal{C}},\tau^{- \mathcal{W} (\mathcal{C}) -1}(W_{\mathcal{C}})) =0$,
which implies $\rk {\mathcal{C}}> - \mathcal{W} (\mathcal{C}) -1$. Since $\mathcal{C}$ does not possess a brick and hence $\rk \mathcal{C} \leq 0$ it is $|\rk \mathcal{C}| \leq \mathcal{W}(\mathcal{C})$.\\
$(ii)$: In \cite[3.1]{kerlu} it is  proven that
$$\inf \left\{\rk(\mathcal{C}) |\; \mathcal{C} \in \Omega(\mathcal{K}_r) \right\}=- \infty$$
where $\Omega(\mathcal{K}_r)$ denotes the set of regular components of $\modi \mathcal{K}_r$. Since $\rk \mathcal{C}=1$ iff $\mathcal{C}$ contains a brick, we can conclude $(ii)$ with $(i)$.
\end{proof}

For every component $\mathcal{C}$ containg a brick, it is $\rk \mathcal{C}=1$. By contrast, the examples $\mathcal{C}_n$ and $\mathcal{C}_{\alpha}$ show, that some components containing bricks may be distinguished via the invariant $\mathcal{W}$.

\subsection{The category CJT(2,r)}
In this subsection, we direct our attention towards the category $\CJT(2,r)$ and make some statemens concerning Auslander-Reiten components of $B(2,r)$. Friedlander and Pevtsova have shown that for the group algebra $kE_r$, the constant $j$-rank property is in fact a property of the components of the stable Auslander-Reiten quiver of $kE_r$ \cite[4.7]{fp}. We will see, that the situation is rather different in our context.\\
Unlike $\EIP(2,r)$ and $\EKP(2,r)$, the category $\CJT(2,r)$ is neither closed under images nor under submodules and is hence more difficult to grasp categorically. However, $\CJT(2,r)$ is closed under direct summands \cite[3.7]{cafrpe08}. We are able to make more specific statements about the category $\CJT(2,r)=\CR^1(2,r)$ as opposed to $\CR^j(n,r)$ with $n>2$.

\begin{lemma}\label{cr}
Let $M \in \modi B(2,r)$ be regular and not isomorphic to an $X_{\alpha}$. Let
$$0 \rightarrow \tau(M) \rightarrow E \rightarrow M \rightarrow 0$$
be the Auslander-Reiten sequence ending in $M$. If two out of the three modules are of constant rank, then so is the third.
\end{lemma}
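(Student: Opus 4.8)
The plan is to re-express the constant-rank condition homologically via Theorem \ref{main} and then exploit that the given sequence is almost split. By Theorem \ref{main}(c), a module $N \in \modi B(2,r)$ belongs to $\CR^1(2,r)=\CJT(2,r)$ exactly when the function $\alpha \mapsto \dim_k \Ext^1(X_\alpha, N)$ is constant on $k^r \backslash 0$. It therefore suffices to produce, for each $\alpha \in k^r \backslash 0$, an additive relation among $\dim_k \Ext^1(X_\alpha, \tau M)$, $\dim_k \Ext^1(X_\alpha, E)$ and $\dim_k \Ext^1(X_\alpha, M)$ that is valid uniformly in $\alpha$.

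First I would apply $\Hom(X_\alpha,-)$ to the sequence $0 \to \tau M \to E \xrightarrow{g} M \to 0$. As $\pd(X_\alpha)=1$ by Proposition \ref{prop}(i), we have $\Ext^2(X_\alpha,-)=0$, so the resulting long exact sequence terminates after the term $\Ext^1(X_\alpha,M)$; its only connecting map is $\delta\colon \Hom(X_\alpha,M) \to \Ext^1(X_\alpha,\tau M)$. The crucial step is to show $\delta=0$, equivalently that $g_*\colon \Hom(X_\alpha,E) \to \Hom(X_\alpha,M)$ is surjective. For this I would invoke the defining property of the almost split sequence: any morphism $X_\alpha \to M$ that is not a split epimorphism factors through $g$. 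A split epimorphism $X_\alpha \to M$ would realize the indecomposable $M$ as a direct summand of $X_\alpha$, and since $X_\alpha$ is a brick (Proposition \ref{prop}(iii)) this forces $M \cong X_\alpha$, contrary to hypothesis. Hence every morphism $X_\alpha \to M$ factors through $g$, so $g_*$ is surjective and $\delta=0$.

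With $\delta=0$ the long exact sequence breaks off into the short exact sequence
\[
0 \to \Ext^1(X_\alpha,\tau M) \to \Ext^1(X_\alpha,E) \to \Ext^1(X_\alpha,M) \to 0,
\]
so that
\[
\dim_k \Ext^1(X_\alpha,E) = \dim_k \Ext^1(X_\alpha,\tau M) + \dim_k \Ext^1(X_\alpha,M)
\]
holds for every $\alpha \in k^r \backslash 0$. If two of these three functions of $\alpha$ are constant, this relation forces the third to be constant as well, which by Theorem \ref{main}(c) is precisely the claim.

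I expect the main (indeed the only) subtlety to be the surjectivity of $g_*$: it is exactly here that the hypotheses ``$M$ regular'' and ``$M \not\cong X_\alpha$'' enter, guaranteeing that $M$ is indecomposable and not a summand of the brick $X_\alpha$, so that the almost split factorization applies to \emph{every} map into $M$. Everything else is the formal bookkeeping of the long exact $\Ext$-sequence together with the homological dictionary supplied by Theorem \ref{main}.
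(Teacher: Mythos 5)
Your argument is correct and is essentially the paper's own proof: both hinge on the observation that, since $E \rightarrow M$ is right almost split, $X_{\alpha}$ is indecomposable and $M \not\cong X_{\alpha}$, every morphism $X_{\alpha} \rightarrow M$ factors through $E$, which yields a short exact sequence of $\Hom$-spaces (equivalently, since $\pd X_{\alpha}=1$, of $\Ext^1$-spaces) whose dimensions are additive, after which Theorem \ref{main} finishes the argument. The only cosmetic difference is that the paper records the resulting short exact sequence on $\Hom(X_{\alpha},-)$ while you record it on $\Ext^1(X_{\alpha},-)$.
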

\begin{proof}
Since $ E \rightarrow M$ is right almost split and $X_{\alpha}$ is indecomposable, any morphism $X_{\alpha} \rightarrow M$ factors through $E$. Hence we get the following exact sequence 
$$0 \rightarrow \Hom_{\mathcal{K}_r}(X_{\alpha},\tau(M)) \rightarrow \Hom_{\mathcal{K}_r}(X_{\alpha},E) \rightarrow \Hom_{\mathcal{K}_r}(X_{\alpha},M) \rightarrow 0$$
and the assertion follows with Theorem \ref{main}.
\end{proof}

A direct consequence is the following

\begin{prop}\label{rk0}
Let $\mathcal{C}$ be a regular component in $\Gamma(\mathcal{K}_r)$.
\begin{enumerate}[(i)]
\item If all quasi-simple modules in $\mathcal{C}$ are of constant rank, then $\mathcal{C} \subseteq \CJT(2,r)$.
\item In particular, if $\mathcal{W}(\mathcal{C})=0$, then $\mathcal{C} \subseteq \CJT(2,r)$.
\end{enumerate}
\end{prop}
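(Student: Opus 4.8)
The plan is to deduce (ii) from (i) and to prove (i) by induction on quasi-length, propagating the constant rank property upward from the quasi-simple modules by means of Lemma \ref{cr}.

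For (i), I would use the description of a regular $\mathbb{Z}A_{\infty}$-component recalled above: every module has the form $X(s)$ with $X$ quasi-simple of quasi-length $s$, the functor $\tau$ preserves quasi-length, and for $s\geq 2$ the Auslander--Reiten sequence ending in $X(s)$ reads
$$0 \rightarrow (\tau X)(s) \rightarrow X(s-1) \oplus (\tau X)(s+1) \rightarrow X(s) \rightarrow 0,$$
the summand $X(s-1)$ being omitted when $s=1$. Claiming that every module of quasi-length $s$ in $\mathcal{C}$ lies in $\CR^1(2,r)=\CJT(2,r)$, I would argue by induction on $s$, the case $s=1$ being the hypothesis. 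For the step, I fix a module $N=X(s-1)$ of quasi-length $s-1$ and consider the Auslander--Reiten sequence ending in it,
$$0 \rightarrow (\tau X)(s-1) \rightarrow X(s-2) \oplus (\tau X)(s) \rightarrow X(s-1) \rightarrow 0;$$
its middle term contains the quasi-length-$s$ module $(\tau X)(s)$, and as $X$ runs through all quasi-simple modules of $\mathcal{C}$ these summands exhaust the modules of quasi-length $s$. Since $N$ and $\tau N=(\tau X)(s-1)$ have quasi-length $s-1$, they are of constant rank by the inductive hypothesis, so Lemma \ref{cr} yields that the middle term is of constant rank; the remaining summand $X(s-2)$ has smaller quasi-length and is already of constant rank, whence additivity of $\rk$ (or closure of $\CJT(2,r)$ under direct summands) forces $(\tau X)(s)$ to be of constant rank as well. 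For $s=2$ the middle term is indecomposable and nothing has to be split off.

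The one point that requires care is the hypothesis of Lemma \ref{cr} that the apex $N$ is not isomorphic to any $X_{\alpha}$; this is automatic once $s-1\geq 2$, but in the step from $s=1$ to $s=2$ the apex $N=X(1)$ is quasi-simple and could a priori be some $X_{\alpha}$. I expect this to be the main obstacle, and it is removed by observing that no $X_{\alpha}$ can occur in $\mathcal{C}$ under the hypothesis of (i): on $(X_{\alpha})_0\cong k$ the operator $\beta_{X_{\alpha}}$ vanishes when $\beta$ is proportional to $\alpha$ and is nonzero otherwise, so $\rk\beta_{X_{\alpha}}$ takes the values $0$ and $1$ and hence $X_{\alpha}\notin\CR^1(2,r)$. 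As each $X_{\alpha}$ is quasi-simple, the assumption that all quasi-simple modules of $\mathcal{C}$ are of constant rank forces $X_{\alpha}\notin\mathcal{C}$, so Lemma \ref{cr} is applicable at every apex occurring in the induction.

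Finally, (ii) follows from (i) once I check that $\mathcal{W}(\mathcal{C})=0$ makes every quasi-simple module of $\mathcal{C}$ of constant rank. By Theorem \ref{shape} and the definition of $\mathcal{W}$, the condition $\mathcal{W}(\mathcal{C})=0$ means $\tau M_{\mathcal{C}}=W_{\mathcal{C}}$, so the quasi-simple modules of $\mathcal{C}$ are exactly the $\tau^{n}(W_{\mathcal{C}})$, $n\in\mathbb{Z}$, with $\tau^{n}(W_{\mathcal{C}})\in(\rightarrow W_{\mathcal{C}})\subseteq\EIP(2,r)$ for $n\geq 0$ and $\tau^{n}(W_{\mathcal{C}})\in(M_{\mathcal{C}}\rightarrow)\subseteq\EKP(2,r)$ for $n\leq -1$. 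Since $\EIP(2,r)\cup\EKP(2,r)\subseteq\CR^1(2,r)$, all quasi-simple modules of $\mathcal{C}$ are of constant rank, and part (i) then gives $\mathcal{C}\subseteq\CJT(2,r)$.
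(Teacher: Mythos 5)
Your proof is correct and follows exactly the route the paper intends: the paper states Proposition \ref{rk0} as ``a direct consequence'' of Lemma \ref{cr}, and your induction on quasi-length via the Auslander--Reiten sequences $0 \rightarrow (\tau X)(s) \rightarrow X(s-1)\oplus(\tau X)(s+1)\rightarrow X(s)\rightarrow 0$, together with additivity of rank over direct summands, is the standard way to propagate constant rank from the quasi-simple layer to the whole component. Your extra observation that no $X_{\alpha}$ has constant rank (so the exclusion hypothesis of Lemma \ref{cr} is vacuous under the assumption of (i)) is a detail the paper leaves implicit, and the deduction of (ii) from Theorem \ref{shape} and $\EIP(2,r)\cup\EKP(2,r)\subseteq\CR^1(2,r)$ matches the intended reading.
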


This especially tells us, that there are many indecomposable modules of constant Jordan type in Loewy length two that satisfy neither the equal images property nor the equal kernels property which is not the case if $r=2$. The inclusion $\ind \EIP(2,r) \cup \ind \EKP(2,r) \subseteq \ind \CJT(2,r)$ of indecomposable objects is proper if and only if $r>2$. Since $\mathfrak{F}_{(2,r)}$ is dense, this directly implies the same result for the categories $\EIP_2(kE_r),\ \EKP_2(kE_r)$ and $\CJT_2(kE_r)$.

\begin{prop}\label{rk1}
Let $\mathcal{C}$ be a regular component with $\mathcal{W}(\mathcal{C})=1$. Then either $\mathcal{C} \subseteq \CJT(2,r)$ or there are no indecomposable modules of constant rank in $\mathcal{C}$ apart from the modules in $\EIP(n,r) \cap  \mathcal{C}$ and $\EKP(n,r) \cap  \mathcal{C}$.
\end{prop}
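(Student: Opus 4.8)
The plan is to reduce the whole statement to the behaviour of the single additive function $h_\alpha := \dim_k\Hom(X_\alpha,-)$ on $\mathcal{C}$, exploiting that in a width-one component exactly one quasi-simple module can carry any dependence on $\alpha$. First I would fix coordinates on $\mathcal{C}$: write $Q_i := \tau^{-i}(W_\mathcal{C})$ for the quasi-simple modules and $Q_i(s)$ for the indecomposable of quasi-length $s$ with quasi-socle $Q_i$. Since $\mathcal{W}(\mathcal{C})=1$ means $\tau^2(M_\mathcal{C})=W_\mathcal{C}$, we have $W_\mathcal{C}=Q_0$ and $M_\mathcal{C}=Q_2$. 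By Theorem \ref{shape} the cones are $\mathcal{C}\cap\EIP(2,r)=(\rightarrow Q_0)=\{Q_i(s)\mid i+s-1\le 0\}$ and $\mathcal{C}\cap\EKP(2,r)=(Q_2\rightarrow)=\{Q_i(s)\mid i\ge 2\}$, so that $\mathcal{C}$ is the disjoint union of these two cones and the gap region $G:=\{Q_i(s)\mid i\le 1\le i+s-1\}$; a module lies in $G$ exactly when the index $1$ occurs among its quasi-composition factors $Q_i,\dots,Q_{i+s-1}$. In particular $\{Q_j\mid j\le 0\}\subset\EIP(2,r)$ and $\{Q_j\mid j\ge 2\}\subset\EKP(2,r)$, while $Q_1$ is the unique quasi-simple module in $G$.

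Next I would turn $h_\alpha$ into an additive function. Because every brick $X_\alpha$ lies in a component of width $2$ (the computation for $\mathcal{C}_\alpha$ in the examples above), the width-one hypothesis guarantees that no module of $\mathcal{C}$ is isomorphic to any $X_\alpha$; hence the argument proving Lemma \ref{cr} applies to \emph{every} mesh of $\mathcal{C}$ and produces, for each $M\in\mathcal{C}$ with Auslander-Reiten sequence $0\to\tau M\to E\to M\to 0$, a short exact sequence $0\to\Hom(X_\alpha,\tau M)\to\Hom(X_\alpha,E)\to\Hom(X_\alpha,M)\to 0$. Thus $h_\alpha$ is additive on the meshes of the $\mathbb{Z}A_\infty$-quiver $\mathcal{C}$, and a short induction on the quasi-length gives the closed form
\[ h_\alpha(Q_i(s))=\sum_{j=i}^{i+s-1}h_\alpha(Q_j). \]
I would then feed in Theorem \ref{main}: for $j\le 0$ we have $Q_j\in\EIP(2,r)$, hence $\Ext^1(X_\alpha,Q_j)=0$ and $h_\alpha(Q_j)$ is independent of $\alpha$; for $j\ge 2$ we have $Q_j\in\EKP(2,r)$, hence $h_\alpha(Q_j)=\dim_k\Hom(X_\alpha,Q_j)=0$. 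Every summand with $j\ne 1$ is therefore constant in $\alpha$, and we obtain for each $M=Q_i(s)$ a decomposition $h_\alpha(M)=C_M+\varepsilon_M\,h_\alpha(Q_1)$, where $C_M\in\mathbb{N}_0$ is independent of $\alpha$ and $\varepsilon_M=1$ if $M\in G$ and $\varepsilon_M=0$ otherwise.

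Finally I would translate $h_\alpha$ back into the constant-rank condition. The quantity $\dim_k\Hom(X_\alpha,M)-\dim_k\Ext^1(X_\alpha,M)$ depends only on dimension vectors, and since all $X_\alpha$ share one dimension vector (they are the cokernels of $\alpha(1)\colon P(1)\to P(0)$) it is independent of $\alpha$; hence, using $\CJT(2,r)=\CR^1(2,r)$ and Theorem \ref{main} (c), a module $M\in\mathcal{C}$ is of constant rank if and only if $h_\alpha(M)$ is independent of $\alpha$. Combined with the formula $h_\alpha(M)=C_M+\varepsilon_M\,h_\alpha(Q_1)$, this shows that every module outside $G$ is of constant rank (as it must be, lying in $\EIP(2,r)\cup\EKP(2,r)$), whereas a module $M\in G$ is of constant rank if and only if $h_\alpha(Q_1)$ is independent of $\alpha$, that is, if and only if $Q_1$ is of constant rank. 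This equivalence is uniform over all of $G$, which yields the dichotomy: if $Q_1\in\CJT(2,r)$ then $\mathcal{C}\subseteq\CJT(2,r)$, and if $Q_1\notin\CJT(2,r)$ then the constant-rank indecomposables of $\mathcal{C}$ are precisely those in $(\EIP(2,r)\cap\mathcal{C})\cup(\EKP(2,r)\cap\mathcal{C})$. I expect the main obstacle to be the step that makes $h_\alpha$ a genuine additive function on all of $\mathcal{C}$: it hinges on the width-one hypothesis to keep the bricks $X_\alpha$ out of $\mathcal{C}$, and on correctly locating the two cones so that $Q_1$ is identified as the only $\alpha$-sensitive quasi-simple module; once that is set up, the collapse onto $Q_1$ is automatic.
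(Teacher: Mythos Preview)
Your proof is correct and takes a genuinely different route from the paper's. The paper does not set up $h_\alpha=\dim_k\Hom(X_\alpha,-)$ as an additive function on $\mathcal{C}$; instead it works with the Ringel filtration sequences
\[
0 \longrightarrow (\text{quasi-socle}) \longrightarrow [l']\tau^{-1}(W_{\mathcal{C}}) \longrightarrow [l'-1]\tau^{-1}(W_{\mathcal{C}}) \longrightarrow 0,
\]
observes that for $l'>1$ the quasi-socle lies in $\EIP(2,r)$, and then invokes Lemma~\ref{cjt} (the ``$\Ext^2=0$'' lemma) to get $[l']\tau^{-1}(W_{\mathcal{C}})\in\CJT(2,r)$ iff $[l'-1]\tau^{-1}(W_{\mathcal{C}})\in\CJT(2,r)$. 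Chaining these equivalences reduces the entire gap to the single quasi-simple $\tau^{-1}(W_{\mathcal{C}})=Q_1$, exactly your conclusion. Your additive-function argument is more transparent and more quantitative: it exhibits explicitly the formula $h_\alpha(M)=C_M+\varepsilon_M\,h_\alpha(Q_1)$ and uses the hereditary Euler form to translate constancy of $h_\alpha$ into constant rank. The price is that you need the extra input $X_\alpha\notin\mathcal{C}$ (via $\mathcal{W}(\mathcal{C}_\alpha)=2$) to make $h_\alpha$ additive on \emph{all} meshes, whereas the paper's filtration argument sidesteps Lemma~\ref{cr} entirely and runs on the weaker Lemma~\ref{cjt}. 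Conversely, the paper formulates its intermediate step for arbitrary width $n$, giving a statement that could in principle be reused for wider gaps; your method would also generalize (one simply gets several $\alpha$-sensitive quasi-simples in the sum), but the clean collapse to a single module is special to width one in both approaches.
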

\begin{proof}
We first of all show
\begin{enumerate}[(*)]\label{rk5}
\item Let $\mathcal{C}$ be a regular component with $\mathcal{W}(\mathcal{C})=n$ and let $W_{\mathcal{C}}$ and $M_{\mathcal{C}}$ be as in Theorem \ref{shape}. Then for all $1 \leq k \leq n$ we have the following:
If there exists $l\geq k$ such that
\begin{enumerate}[(i)]
\item $[l]\tau^{-k}(W_{\mathcal{C}})$ is of constant rank, then so is $[l']\tau^{-k}(W_{\mathcal{C}})$ for all $l' \geq k$.
\item $\tau^k(M_{\mathcal{C}})(l)$ is of constant rank, then so is $\tau^k(M_{\mathcal{C}})(l')$ for all $l' \geq k$.
\end{enumerate}
\end{enumerate}
{\it Proof of (*)}:
We show $(1)$, $(2)$ is dual. Let $l \geq k$ and $[l]\tau^{-k}(W_{\mathcal{C}})$ be of constant rank. Now assume that there is $l' >k$ minimal such that $[l']\tau^{-k}(W_{\mathcal{C}})$ does not have constant rank. The quasi-socle $\tau^{k-l'-1}(W_{\mathcal{C}})$ satisfies the equal images property and we have a short exact sequence (cf. \cite[2.2]{ri78})
$$0 \rightarrow \tau^{k-l'-1}(W_{\mathcal{C}})  \rightarrow [l']\tau^{-k}(W_{\mathcal{C}}) \rightarrow [l'-1]\tau^{-k}(W_{\mathcal{C}}) \rightarrow 0.$$ 
In view of Lemma \ref{cjt}, $[l'-1]\tau^{-k}(W_{\mathcal{C}})$ has constant rank, a contradiction to the choice of $l'$.\\

Now since $\mathcal{W}(\mathcal{C})=1$, we have $[l]\tau^{-k}(W_{\mathcal{C}})=\tau^{k}(M_{\mathcal{C}})(l)$ for all $k,l \in \mathbb{N}$ and furthermore
$$\mathcal{M}=\left\{[l]\tau^{-k}(W_{\mathcal{C}})| l \geq  k \geq 1 \right\}=\left\{M \in \mathcal{C}|M \notin \EIP(n,r)\cup \EKP(n,r)\right\}.$$
Now (*) implies that if the cone $\mathcal{M}$ contains an element of $\CJT(2,r)$, we have $\mathcal{M} \subseteq \CJT(2,r)$.
%\left\{\tau^{k}(M_{\mathcal{C}})(l)| l \geq  k \geq 1 \right\}
\end{proof}

{\bf Examples}

\begin{enumerate}
\item The component $\mathcal{C}_n$ containing $W_{n,2}^{(r)}$ for $n \geq 3$: It is $\mathcal{W}(\mathcal{C}_n)=0$ and hence Proposition \ref{rk0} implies that all modules in $\mathcal{C}_n$ have constant rank.
\item The component $\mathcal{C}_{\alpha}$ containing $X_{\alpha}$: We claim that there are no constant rank modules in $\mathcal{C}_{\alpha}$ apart from the equal images and equal kernels modules. In view of Statement (*) in the proof of Proposition \ref{rk1}, we only need to show that $[2]X_{\alpha}$ does not have constant rank. Following \cite[V]{cb}, it is
$$\Hom_{\mathcal{K}_r}(X_{\alpha},[j]X_{\alpha})=0$$
for all $j \geq 2$. Hence $\Hom_{\mathcal{K}_r}(X_{\alpha},[2]X_{\alpha})=0$. Since furthermore $[2]X_{\alpha} \notin \EKP(2,r)$, the module can't be of constant rank.
\item The component $\mathcal{C}_{\lambda}$ containing the module $E^{(\lambda)}$: Since $E^{(\lambda)}$ obviously does not have constant rank and  $\mathcal{W}(\mathcal{C}_{\lambda})=1$, Corollary \ref{rk1} implies that there are no modules of constant rank in $\mathcal{C}_{\lambda}$ apart from the equal kernels and equal images modules. % Like before, we can extend this result to components containing a quasi-simple module with dimension vector $(a,b)$, $1 \leq a,b \leq r-1$.
\end{enumerate}

\section*{Acknowledgements}
The results of this paper are part of my doctoral thesis which I am currently writing at the University of Kiel. I would like to thank my advisor Rolf Farnsteiner for his support as well as helpful remarks and comments. Moreover, I would like to thank Julian K\"ulshammer for proof reading and helpful comments. In particular, I would like to thank Otto Kerner for pointing out literature and results on wild hereditary algebras.

\bibliography{lit}{}
\bibliographystyle{plain}
\end{document}